\newtheorem{thm}{Theorem}[section]
\theoremstyle{definition}
\newtheorem{cor}[thm]{Corollary}
\newtheorem{prop}[thm]{Proposition}
\newtheorem{defn}[thm]{Definition}
\newtheorem{defns}[thm]{Definitions}
\newtheorem{lem}[thm]{Lemma}
\newtheorem{defnsnotes}[thm]{Notations and Definitions}
\newtheorem{rem}[thm]{Remark}
\newtheorem{ex}[thm]{Example}
\newtheorem{co}[thm]{Question}
\numberwithin{equation}{section}
\begin{document}
\title[The dual of $z$-submodules of modules and some of extensions]
{The dual of $z$-submodules of modules and some of extensions}
\author[F. Farshadifar, A. Molkhasi and E. Nazari]%
{F. Farshadifar*,  A. Molkhasi** and E. Nazari***}

\newcommand{\acr}{\newline\indent}
\address{\llap{*\,}(Corresponding Author) Department of Mathematics Education, Farhangian University, P.O. Box 14665-889, Tehran, Iran.}
\email{f.farshadifar@cfu.ac.ir}

\address{\llap{**\,} Department of Mathematics Education, Farhangian University, P.O. Box 14665-889, Tehran, Iran.}
\email{molkhasi@gmail.com}

\address{\llap{***\,} Department of Mathematics Education, Farhangian University, P.O. Box 14665-889, Tehran, Iran.}
\email{nazarieb2014@gmail.com}

\subjclass[2010]{13C13, 13C99}%
\keywords {Comultiplication module, coreduced module, $dsz$-submodule,  $dz^\circ$-submodule,  $dqz^\circ$-submodule, $dsz^\circ$-submodule}

\begin{abstract}
Let $R$ be a commutative ring with identity and $M$ be an $R$-module.
The purpose of this paper is to introduced the dual notion of $z$-submodules of $M$ and some of extensions. Moreover, we investigate some properties of these classes of modules when $M$ is a coreduced comultiplication $R$-module.
\end{abstract}
\maketitle

\section{Introduction}
\noindent
Throughout this paper, $R$ will denote a commutative ring with
identity.  We start by giving some definitions and notations needed in the sequel.
\begin{defnsnotes}\label{n1.1}
Let $M$ be an $R$-module.
\begin{itemize}
\item [(1)] A proper ideal $I$ of $R$ is called a \textit{$z$-ideal} (resp. \textit{strong $z$-ideal}) whenever any two
elements (resp. ideals) of $R$ are contained in the same set of maximal ideals and $I$ contains
one of them, then it also contains the other one \cite{MR321915}.
\item [(2)] For each $a \in R$, let $\mathfrak{P}_a$, be the intersection of all minimal prime ideals of $R$ containing $a$.
A proper ideal $I$ of $R$ is called a \textit{$z^\circ$-ideal} if for each $a \in I$
we have $\mathfrak{P}_a \subseteq I$ \cite{MR1736781}.
\item [(3)] For a submodule $N$ of $M$, let $\mathcal{M}(N)$ be the set of maximal submodules of $M$ containing
$N$. A proper submodule $N$ of $M$ is said to be a \textit{$z$-submodule} if for every $x,y \in M$, $\mathcal{M}(x)=\mathcal{M}(y)\not= \emptyset$ and $x \in N$ imply $y \in N$ \cite{F401}.
\item [(4)] The intersection of all maximal submodules of $M$ containing $N$ is said to be the \textit{Jacobson radical} of $N$ and denote by $Rad_N(M)$ \cite{MR3677368}. In case $N$ does not contained in any maximal submodule, the Jacobson radical of $N$ is defined to be $M$.  A proper submodule $N$ of $M$ is said to be a \textit{strongly $z$-submodule} if $Rad_K(M) \subseteq N$  for all submodules $K$ of $N$ with $Rad_K(M)\not=M$ \cite{F401}.
\item [(5)] A proper submodule $P$ of $M$ is said to be \textit{prime} if for any
$r \in R$ and $m \in M$ with $rm \in P$, we have $m \in P$ or $r \in (P :_R M)$ \cite{MR183747, MR498715}.
\item [(6)] A prime submodule $P$ of $M$ is a \textit{minimal prime submodule over} a submodule $N$ of $M$ if $P$ is a minimal element of the set of all
prime submodules of $M$ that contain $N$ \cite{MR2417474}.  \textit{A minimal prime submodule} of $M$ means a minimal prime submodule over the $0$ submodule of $M$. The set of all minimal prime submodules of $M$ will be denoted by $Min^p(M)$. The intersection of all minimal prime submodules of $M$ containing a submodule $K$ of $M$ is denote by $\mathfrak{P}_K$. In case $K$ does not contained in
any minimal prime submodule of $M$, $\mathfrak{P}_K$ is defined to be $M$. Also, the intersection of all minimal prime submodules of $M$ containing $x \in M$ is denote by $\mathfrak{P}_x$. In case $x$ does not contained in
any minimal prime submodule of $M$, $\mathfrak{P}_x$ is defined to be $M$.
\item [(7)] A proper submodule $N$ of $M$ is said to be a \textit{$z^\circ$-submodule} of $M$ if  $\mathfrak{P}_x \subseteq N$  for all  $x \in N$ \cite{F402}.  A proper submodule $N$ of $M$ is said to be a \textit{quasi $z^\circ$-submodule} of $M$ if $\mathfrak{P}_{aM} \subseteq N$  for all  $a \in (N:_RM)$ \cite{F403}. A proper submodule $N$ of $M$ is said to be a \textit{strong $z^\circ$-submodule} of $M$ if  $\mathfrak{P}_K \subseteq N$  for all submodules $K$ of $N$ \cite{F404}.
\item [(8)] $M$ is said to be a \emph{multiplication module} if for every submodule $N$ of $M$ there exists an ideal $I$ of $R$ such that $N=IM$ \cite{Ba81}. Also, $M$ is said to be a \emph{comultiplication module} if for every submodule $N$ of $M$ there exists an ideal $I$ of $R$ such that $N=(0:_MI)$ \cite{MR3934877}.
\item [(9)]  $M$ satisfies the \emph{double annihilator
conditions} (DAC for short) if, for each ideal $I$ of $R$,
we have $I=Ann_R((0:_MI))$. Also, $M$ is a \emph{strong comultiplication module} if $M$ is
a comultiplication $R$-module and satisfies the DAC conditions \cite{MR3934877}.
\item [(10)] A proper submodule $N$ of $M$ is said to be \emph{completely irreducible} if $N=\bigcap _
{i \in I}N_i$, where $ \{ N_i \}_{i \in I}$ is a family of
submodules of $M$, implies that $N=N_i$ for some $i \in I$. By \cite{FHo06},
every submodule of $M$ is an intersection of completely irreducible submodules of $M$. Thus the intersection
of all completely irreducible submodules of $M$ is zero \cite{FHo06}.
\item [(11)] $M$ is said to be \textit{reduced} if the intersection of all prime submodules of $M$
is equal to zero \cite{MR2839935}. Also, $M$ is said to be \emph{coreduced module} if $(L:_Mr)=M$ implies that $L+(0:_Mr)=M$, where $r \in R$ and $L$ is a completely irreducible submodule of $M$ \cite{MR3755273}.
\end{itemize}
\end{defnsnotes}
In \cite{F401, F402, F404, F403}, the notions of $z$-submodules, strong $z$-submodules, $z^\circ$-submodules,  strong $z^\circ$-submodules, and quasi $z^\circ$-submodules of $M$ were introduced and investigated some of their properties when $M$ is a reduced multiplication $R$-module. When a concept is defined in algebra, the question naturally arises, what is the dual of this concept? The purpose of this paper is to introduced the dual of the concepts $z$-submodules, strong $z$-submodules, $z^\circ$-submodules,  strong $z^\circ$-submodules, and quasi $z^\circ$-submodules of $M$. Moreover, we investigate some properties of these classes of modules when $M$ is a coreduced comultiplication $R$-module.

\section{Dual $z$-submodules and strong $z$-submodules}
Let $M$ be an $R$-module.  For a submodule $N$ of $M$, let $\mathcal{M^*}(N)$ be the set of minimal submodules of $M$ contained in $N$ and the set of all minimal submodules of $M$ will be denoted by $Min(M)$. If $N$ is a submodule of $M$, define
 $V^*(N) = \{S \in Min(M) : S \subseteq N\}$. The sum of all minimal submodules of $M$ contained in a submodule $K$ of $M$ is denote by $Soc_K$.  In case $M$ does not contain any minimal submodule which is contained in $K$, then $Soc_K$ is defined to be $(0)$.
\begin{defns}\label{d00.1}
Let $M$ be an $R$-module.
\begin{itemize}
\item [(a)] We say that a non-zero submodule $N$ of $M$ is a \textit{dual $z$-submodule} or  \textit{$dz$-submodule} if $N \subseteq Soc_L$  for all completely irreducible submodules $L$ of $M$ with $N\subseteq L$.
\item [(b)] We say that a non-zero submodule $N$ of $M$ is a \textit{dual strong $z$-submodule} or  \textit{$dsz$-submodule} if $N \subseteq Soc_K$  for all submodules $K$ of $M$ with $N\subseteq K$.
\end{itemize}
\end{defns}

If $N$ is a $dsz$-submodule of an $R$-module $M$, then $N$ is a $dz$-submodule of $M$.
It is natural to ask the following question:
\begin{co}
Let $M$ be an $R$-module. Is every $dz$-submodule of $M$ a $dsz$-submodule of $M$?
\end{co}

An $R$-module $M$ is said to be \emph{cocyclic} if
$Soc_M$ is a large and simple submodule of $M$ \cite{Y98}. A submodule $L$ of $M$ is a completely irreducible
 submodule of $M$ if and only if $M/L$ is a cocyclic $R$-module \cite{FHo06}.
\begin{ex}\label{e0.12}
\begin{itemize}
\item [(a)] Since the  $\Bbb Z$-module $\Bbb Z$ has no minimal submodule, we have every non-zero submodule of the $\Bbb Z$-module $\Bbb Z$ is not a $dz$-submodule.
\item [(b)] The only $dz$-submodule of a cocyclic $R$-module is the minimal submodule of it. In particular, $\langle 1/p+\Bbb Z\rangle$ is the only $dz$-submodule of the $\Bbb Z$-module $\Bbb Z_{p^\infty}$.
\end{itemize}
\end{ex}

\begin{prop}\label{l00.2}
Let $N$ be a non-zero submodule of an $R$-module $M$. Then we have the following.
\begin{itemize}
\item [(a)] $N$ is a $dz$-submodule of $M$ if and only if when $L$ is a completely irreducible submodule of $M$ with $N\subseteq L$, $H$ a submodule of $M$, and $Soc_L \subseteq Soc_H$, then $N \subseteq H$.
\item [(b)] $N$ is a $dsz$-submodule of $M$ if and only if when $K$ is a submodule of $M$ with $N\subseteq K$, $H$ a submodule of $M$, and $Soc_K \subseteq Soc_H$, then $N \subseteq H$.
\end{itemize}
\end{prop}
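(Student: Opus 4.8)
The plan is to prove (a) and (b) in parallel, since they share the same logical shape: the only difference is that in (a) the auxiliary upper bound of $N$ ranges over completely irreducible submodules $L$ of $M$, whereas in (b) it ranges over all submodules $K$ of $M$. Two elementary facts about the assignment $H \mapsto Soc_H$ carry the whole argument. First, $Soc_H \subseteq H$ for every submodule $H$ of $M$, because $Soc_H$ is by definition a sum of submodules of $M$ contained in $H$. Second, $Soc_{Soc_H} = Soc_H$: every minimal submodule of $M$ contained in $H$ is one of the summands defining $Soc_H$, hence is contained in $Soc_H$, so it is a minimal submodule of $M$ contained in $Soc_H$; summing over all such minimal submodules gives $Soc_H \subseteq Soc_{Soc_H}$, and the reverse inclusion is immediate from $Soc_H \subseteq H$ together with the evident monotonicity of $H \mapsto Soc_H$.

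For the forward implication of (a), assume $N$ is a $dz$-submodule, let $L$ be a completely irreducible submodule of $M$ with $N \subseteq L$, let $H$ be a submodule of $M$, and suppose $Soc_L \subseteq Soc_H$. By Definition \ref{d00.1}(a) applied to $L$ we have $N \subseteq Soc_L$, whence $N \subseteq Soc_L \subseteq Soc_H \subseteq H$, which is exactly the desired conclusion. The forward implication of (b) is the same sentence with $K$ in place of $L$ and Definition \ref{d00.1}(b) in place of \ref{d00.1}(a).

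For the converse of (a), assume the stated right-hand condition and fix an arbitrary completely irreducible submodule $L$ of $M$ with $N \subseteq L$; we must show $N \subseteq Soc_L$. Take $H := Soc_L$, which is a submodule of $M$. By the idempotency noted above, $Soc_H = Soc_{Soc_L} = Soc_L$, so in particular $Soc_L \subseteq Soc_H$; applying the condition to this pair $L, H$ yields $N \subseteq H = Soc_L$. Since $L$ was arbitrary, $N$ is a $dz$-submodule. Part (b) is proved identically, with $K$ in place of $L$ throughout.

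I do not expect a genuine obstacle here. The only step needing a moment's attention is the identity $Soc_{Soc_H} = Soc_H$, which is what legitimizes the specialization $H = Soc_L$ (respectively $H = Soc_K$) in the converse; it is also worth keeping in mind the convention that $Soc_H = (0)$ when $H$ contains no minimal submodule of $M$, under which the converse argument still runs and shows, consistently with $N \neq (0)$, that every completely irreducible (respectively arbitrary) submodule of $M$ lying above $N$ must contain a minimal submodule of $M$.
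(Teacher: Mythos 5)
Your proof is correct and follows essentially the same route as the paper's: the forward direction is the chain $N \subseteq Soc_L \subseteq Soc_H \subseteq H$, and the converse specializes $H := Soc_L$ (resp. $Soc_K$) using the idempotency $Soc_{Soc_L}=Soc_L$, which is exactly the paper's argument. The only difference is that you spell out the justification of that idempotency, which the paper states without proof.
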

\begin{proof}
(a) First suppose that $N$ is a $dz$-submodule of $M$.
Let $L$ be a completely irreducible submodule of $M$ with $N\subseteq L$, $H$ a submodule of $M$, and $Soc_L \subseteq Soc_H$. Then by assumption,
$$
N \subseteq Soc_L \subseteq Soc_H\subseteq H.
$$
For the converse, let $L$ be a completely irreducible submodule of $N$ with $N\subseteq L$. Then
$
Soc_{Soc_L}=Soc_L$, implies that $N \subseteq Soc_L$, by assumption.

(b) This is similar to the proof of part (a).
\end{proof}

\begin{rem}\label{r0.3}
Let $M$ be an $R$-module.
If $N$ is a $dz$-submodule (resp. $dsz$-submodule) of $M$, then for each completely irreducible submodule $L$ (resp. submodule $K$) of $M$ with $N\subseteq L$ (resp. $N\subseteq K$)
we have  $Soc_L\not=0$ (resp. $Soc_K\not=0$), i.e. $L$ (resp. $K$) contains at least one minimal submodule of $M$.
Clearly, every minimal submodule of $M$ is a $dsz$-submodule of $M$. Also, the family of $dsz$-submodules of $M$ is closed under summation. Therefore, if  $Soc_M\not=0$, then $Soc_M$ is a $dsz$-submodule of $M$ and it contains every  $dsz$-submodule of $M$.
\end{rem}

\begin{prop}\label{p0.4}
Let $N$ be a non-zero submodule of an $R$-module $M$. Then $N$ as an $R$-submodule is a  $dz$-submodule (resp. $dsz$-submodule) if and only if as an $R/Ann_R(M)$-submodule is a $dz$-submodule (resp.  $dsz$-submodule).
\end{prop}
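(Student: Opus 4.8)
The plan is to show that all the data entering the definition of a $dz$-submodule (and of a $dsz$-submodule) — namely completely irreducible submodules, minimal submodules, the operator $Soc_{(-)}$, and the containment relations — are unchanged when we regard $M$ as a module over $\bar R := R/\mathrm{Ann}_R(M)$ rather than over $R$. Once this is established, the "if and only if" is immediate because the two definitions become literally the same statement.

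First I would record the basic observation that a subset $S \subseteq M$ is an $R$-submodule if and only if it is an $\bar R$-submodule; this is standard, since $\mathrm{Ann}_R(M)$ annihilates every element of $M$, so the $R$-action factors through $\bar R$ and the lattice of submodules is identical. In particular the set of minimal submodules $Min(M)$ computed over $R$ coincides with the one computed over $\bar R$, and hence for every submodule $K$ the sum $Soc_K$ is the same in both settings (it is the sum of the same family of submodules). Next I would verify that completely irreducible submodules are preserved: by the remark recalled before Example \ref{e0.12}, $L$ is completely irreducible in $M_R$ iff $M_R/L$ is cocyclic, and since $M_R/L$ and $M_{\bar R}/L$ have the same submodule lattice and the same socle, one is cocyclic iff the other is; alternatively, completely irreducibility is defined purely in terms of the submodule lattice (being not expressible as an intersection of strictly larger submodules), which we have already seen is the same over $R$ and over $\bar R$.

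With these identifications in hand, the equivalence is a direct translation. Suppose $N$ is a $dz$-submodule of $M_R$. Given any completely irreducible $\bar R$-submodule $L$ of $M_{\bar R}$ with $N \subseteq L$, the preceding paragraph tells us $L$ is also a completely irreducible $R$-submodule of $M_R$ with $N \subseteq L$, so $N \subseteq Soc_L$ computed over $R$, which equals $Soc_L$ computed over $\bar R$; hence $N$ is a $dz$-submodule of $M_{\bar R}$. The reverse implication is identical, and the $dsz$-case is the same argument with "completely irreducible submodule $L$" replaced by "arbitrary submodule $K$", which needs only the submodule-lattice identification and the invariance of $Soc_{(-)}$.

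I do not expect a genuine obstacle here; the statement is essentially a bookkeeping lemma of the kind that accompanies most module-theoretic notions. The only point that requires a word of care is the invariance of completely irreducible submodules, and even that reduces to the already-quoted characterization via cocyclicity together with the fact that the socle and the submodule lattice of $M/L$ do not change when the scalar ring is quotiented by $\mathrm{Ann}_R(M)$ (note $\mathrm{Ann}_R(M) \subseteq \mathrm{Ann}_R(M/L)$, so the $\bar R$-action on $M/L$ is well defined). Everything else is a routine unwinding of Definition \ref{d00.1}.
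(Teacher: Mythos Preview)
Your proposal is correct and is precisely the routine verification the paper has in mind: the paper's own proof is the single line ``This is straightforward,'' and your argument simply spells out that the submodule lattice, the minimal submodules, $Soc_{(-)}$, and the completely irreducible submodules of $M$ are unchanged when passing from $R$ to $R/\mathrm{Ann}_R(M)$, which is exactly what makes it straightforward.
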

\begin{proof}
This is straightforward.
\end{proof}

The intersection of all maximal ideals of $R$ containing an ideal $I$ of $R$ is denote by $\mathfrak{M}_I$.
\begin{thm}\label{t0.5}
Let $M$ be an $R$-module. Then $Soc_{(0:_MI)}\subseteq (0:_M\mathfrak{M}_I)$ for each ideal $I$ of $R$. The reverse inclusion holds when $M$ is a faithful finitely generated comultiplication $R$-module.
\end{thm}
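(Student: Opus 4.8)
The plan is to prove the two inclusions separately. The inclusion $Soc_{(0:_MI)}\subseteq(0:_M\mathfrak M_I)$ holds for an arbitrary $R$-module; for the reverse I would use that $M$ is finitely generated, faithful and comultiplication and in fact establish that the two sides are \emph{equal} in that case. For the first inclusion, note that by definition $Soc_{(0:_MI)}$ is the sum of the minimal submodules of $M$ contained in $(0:_MI)$, so it suffices to show that every such minimal submodule $S$ satisfies $\mathfrak M_IS=0$. Since $S$ is a minimal submodule it is a simple $R$-module, so $\mathfrak m:=Ann_R(S)$ is a maximal ideal of $R$; and $S\subseteq(0:_MI)$ gives $IS=0$, i.e. $I\subseteq Ann_R(S)=\mathfrak m$. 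Hence $\mathfrak m$ occurs in the intersection defining $\mathfrak M_I$, so $\mathfrak M_I\subseteq\mathfrak m=Ann_R(S)$, and therefore $\mathfrak M_IS=0$, i.e. $S\subseteq(0:_M\mathfrak M_I)$. Summing over all such $S$ gives $Soc_{(0:_MI)}\subseteq(0:_M\mathfrak M_I)$.

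For the reverse inclusion, assume $M$ is finitely generated, faithful and comultiplication. Since $M$ is comultiplication we have $N=(0:_MAnn_R(N))$ for every submodule $N$, so the map $N\mapsto Ann_R(N)$ from submodules of $M$ to ideals of $R$ is injective. I would therefore prove the sharper statement $Soc_{(0:_MI)}=(0:_M\mathfrak M_I)$ by showing that the two sides have the same annihilator in $R$; combined with injectivity this gives equality, hence in particular the reverse inclusion. The key external input is the known fact that a finitely generated faithful comultiplication module is a \emph{strong} comultiplication module, i.e. it satisfies the double annihilator conditions of Definition~\ref{n1.1}(9): $Ann_R((0:_MJ))=J$ for every ideal $J$ of $R$. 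Applying this with $J=\mathfrak M_I$ gives $Ann_R((0:_M\mathfrak M_I))=\mathfrak M_I$, so it remains to show $Ann_R(Soc_{(0:_MI)})=\mathfrak M_I$.

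For that, write $Soc_{(0:_MI)}=\sum_{S\in\mathcal S}S$, where $\mathcal S$ is the set of minimal submodules of $M$ contained in $(0:_MI)$, so that $Ann_R\big(Soc_{(0:_MI)}\big)=\bigcap_{S\in\mathcal S}Ann_R(S)$. As seen above, for each $S\in\mathcal S$ the ideal $Ann_R(S)$ is a maximal ideal of $R$ containing $I$. Conversely, let $\mathfrak m$ be any maximal ideal of $R$ with $I\subseteq\mathfrak m$. Then $(0:_M\mathfrak m)\neq 0$ — again a consequence of the double annihilator conditions, since $Ann_R\big((0:_M\mathfrak m)\big)=\mathfrak m\neq R$ — so $(0:_M\mathfrak m)$ is a nonzero vector space over the field $R/\mathfrak m$ and hence contains a one-dimensional subspace $S$, which is a minimal submodule of $M$ with $Ann_R(S)=\mathfrak m$ and $S\subseteq(0:_M\mathfrak m)\subseteq(0:_MI)$; thus $S\in\mathcal S$ and $Ann_R(S)=\mathfrak m$. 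Consequently $\{Ann_R(S):S\in\mathcal S\}$ is exactly the set of maximal ideals of $R$ containing $I$, and so $Ann_R\big(Soc_{(0:_MI)}\big)=\bigcap_{S\in\mathcal S}Ann_R(S)=\mathfrak M_I$, which finishes the argument. The step I expect to be the main obstacle is establishing (or properly citing) the strong-comultiplication property of finitely generated faithful comultiplication modules: it is precisely this that makes both $Ann_R((0:_M\mathfrak M_I))=\mathfrak M_I$ and the non-vanishing $(0:_M\mathfrak m)\neq 0$ available, and once one grants it the remainder is routine bookkeeping with annihilators. (The degenerate case $I=R$ is trivial, since then $(0:_MI)=0=(0:_M\mathfrak M_I)$ and both inclusions are clear.)
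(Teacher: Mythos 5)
Your proof of the first inclusion is, up to notation, exactly the paper's argument: a minimal submodule $T\subseteq(0:_MI)$ is simple, so $Ann_R(T)$ is a maximal ideal containing $I$, hence $\mathfrak M_I\subseteq Ann_R(T)$ and $T\subseteq(0:_M\mathfrak M_I)$. That half is fine.

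For the reverse inclusion your route is genuinely different from the paper's, and it contains a gap: the entire argument rests on the assertion that a finitely generated faithful comultiplication module automatically satisfies the double annihilator condition, i.e.\ is a strong comultiplication module in the sense of Definition~\ref{n1.1}(9). You describe this as ``the known fact'' but neither prove it nor cite it precisely, and it is not something this paper takes for granted: the paper consistently treats DAC as an \emph{additional} hypothesis beyond ``faithful finitely generated comultiplication'' (compare the converse in Corollary~\ref{c0.6}, which assumes only the latter, with Corollary~\ref{c0.7}(a) and Corollary~\ref{c7.3}, which explicitly assume strong comultiplication). So the hardest step of your proof is delegated to an unestablished lemma that is strictly stronger than what is needed. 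The paper instead uses two weaker, citable facts: the identity $\sum_i(0:_M\mathfrak m_i)=(0:_M\bigcap_i\mathfrak m_i)$ for faithful finitely generated comultiplication modules \cite[Proposition 2.1(b)]{F407}, and the fact that a nonzero $(0:_M\mathfrak m_i)$ is a minimal submodule \cite[Theorem 7(f)]{MR3934877}; together these exhibit $(0:_M\mathfrak M_I)$ directly as a sum of minimal submodules contained in $(0:_MI)$, with no annihilator bookkeeping and no appeal to injectivity of $N\mapsto Ann_R(N)$. I would add that the only two consequences of DAC you actually invoke, namely $(0:_M\mathfrak m)\neq 0$ for every maximal ideal $\mathfrak m$ and $Ann_R\bigl((0:_M\mathfrak M_I)\bigr)=\mathfrak M_I$, can both be recovered from that same sum identity (for the first, Nakayama gives $\mathfrak mM\neq M$ for finitely generated faithful $M$, whence a simple quotient $M/K$, and for $0\neq a\in Ann_R(K)$ the submodule $aM$ is simple and annihilated by $\mathfrak m$), so your skeleton is repairable; but as written the proof is incomplete at its central point.
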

\begin{proof}
Let $I$ be an ideal of $R$ and $T$ be a minimal submodule of $M$ such that $T \subseteq (0:_MI)$. Then $I \subseteq Ann_R(T)$. As $Ann_R(T)$ is a maximal ideal of $R$, we have $\mathfrak{M}_I \subseteq Ann_R(T)$. Therefore,  $T\subseteq (0:_MAnn_R(T)) \subseteq (0:_M\mathfrak{M}_I)$. Hence,
$Soc_{(0:_MI)}\subseteq (0:_M\mathfrak{M}_I)$ for each ideal $I$ of $R$. For the reverse inclusion, let $\mathfrak{M}_I=\bigcap \mathfrak{m}_i$, where
$\mathfrak{m}_i$ ranges over all maximal ideals of $R$ that contains $I$.
As $M$ is a faithful finitely generated comultiplication $R$-module, we have $\sum (0:_M\mathfrak{m}_i)=(0:_M\bigcap{\mathfrak{m}_i})$ by using \cite[Proposition 2.1(b)]{F407}. Thus
$$
\sum_{(0:_M\mathfrak{m}_i)\not=0}(0:_M\mathfrak{m}_i)=\sum (0:_M\mathfrak{m}_i)=(0:_M\mathfrak{M}_I)\subseteq (0:_MI).
$$
It follows that $(0:_M\mathfrak{M}_I)\subseteq Soc_{(0:_MI)}$ since by \cite[Theorem 7 (f)]{MR3934877}, $(0:_M\mathfrak{m}_i)\not=0$ is a minimal submodule of $M$.
\end{proof}

\begin{cor}\label{c0.6}
Let $N$ be a $dsz$-submodule of an $R$-module $M$. Then $Ann_R(N)$ is a strongly $z$-ideal of $R$. The converse holds when $M$ is a faithful finitely generated comultiplication $R$-module.
\end{cor}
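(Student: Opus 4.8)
The plan is to deduce everything from Theorem~\ref{t0.5} together with the identity $K=(0:_M Ann_R(K))$, valid for every submodule $K$ of a comultiplication module. Before starting I would record a convenient reformulation of \ref{n1.1}(1): a proper ideal $I$ of $R$ is a strong $z$-ideal if and only if $\mathfrak{M}_J\subseteq I$ for every ideal $J$ with $J\subseteq I$. This is immediate once one observes that a maximal ideal contains $J$ precisely when it contains $\mathfrak{M}_J$ (since $J\subseteq\mathfrak{M}_J$ and $\mathfrak{M}_J$ is the intersection of all maximal ideals over $J$), so $J$ and $\mathfrak{M}_J$ lie in exactly the same set of maximal ideals while $\mathfrak{M}_{\mathfrak{M}_J}=\mathfrak{M}_J$.

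For the forward implication, let $N$ be a $dsz$-submodule of $M$; since $N\neq 0$ the ideal $Ann_R(N)$ is proper. Given an ideal $J\subseteq Ann_R(N)$, we have $JN=0$, hence $N\subseteq (0:_M J)$. As $(0:_M J)$ is a submodule of $M$ containing $N$ and $N$ is a $dsz$-submodule, $N\subseteq Soc_{(0:_M J)}$, and the first part of Theorem~\ref{t0.5} gives $Soc_{(0:_M J)}\subseteq (0:_M\mathfrak{M}_J)$. Therefore $\mathfrak{M}_J N=0$, i.e. $\mathfrak{M}_J\subseteq Ann_R(N)$, and by the reformulation above $Ann_R(N)$ is a strong $z$-ideal.

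For the converse, assume $M$ is a faithful finitely generated comultiplication $R$-module and $Ann_R(N)$ is a strong $z$-ideal (so $N\neq 0$). Let $K$ be a submodule of $M$ with $N\subseteq K$; I must show $N\subseteq Soc_K$. Since $M$ is a comultiplication module, $K=(0:_M I)$ with $I:=Ann_R(K)$, which is a proper ideal because $K\supseteq N\neq 0$. From $N\subseteq K=(0:_M I)$ we get $IN=0$, so $I\subseteq Ann_R(N)$; the strong $z$-ideal hypothesis then yields $\mathfrak{M}_I\subseteq Ann_R(N)$, that is, $N\subseteq (0:_M\mathfrak{M}_I)$. Finally, the equality case of Theorem~\ref{t0.5} (which applies since $M$ is faithful, finitely generated and comultiplication) gives $(0:_M\mathfrak{M}_I)=Soc_{(0:_M I)}=Soc_K$, so $N\subseteq Soc_K$, as required.

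The proof is largely formal once Theorem~\ref{t0.5} is available; the only delicate point is the passage between the ``same set of maximal ideals'' wording of the strong $z$-ideal definition and the inclusion $\mathfrak{M}_J\subseteq I$ used here, and making sure the annihilator ideals in play remain proper so that $\mathfrak{M}_J$ is an honest intersection of maximal ideals. I do not expect any obstacle beyond this bit of bookkeeping.
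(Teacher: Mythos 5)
Your proposal is correct and follows essentially the same route as the paper: both directions hinge on Theorem~\ref{t0.5}, with $N\subseteq(0:_MJ)$ giving $N\subseteq Soc_{(0:_MJ)}\subseteq(0:_M\mathfrak{M}_J)$ for the forward implication, and the comultiplication identity $K=(0:_MAnn_R(K))$ plus the equality case of Theorem~\ref{t0.5} for the converse. The only difference is that you spell out the equivalence between the ``same set of maximal ideals'' definition of a strong $z$-ideal and the condition $\mathfrak{M}_J\subseteq I$ for all $J\subseteq I$, which the paper uses implicitly.
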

\begin{proof}
First let $N$ be a $dsz$-submodule of $M$ and $I$ be an ideal of $R$ such that $I\subseteq Ann_R(N)$. Then $N \subseteq (0:_MI)$ and so by assumption, $N \subseteq Soc_{(0:_MI)}$. Thus  $N \subseteq  (0:_M\mathfrak{M}_I)$ by Theorem \ref{t0.5}. This implies that
$\mathfrak{M}_I\subseteq Ann_R(N)$. Hence $Ann_R(N)$ is a strong $z$-ideal of $R$. For converse, let $K$ be a submodule of $M$ such that $N\subseteq K$. Thus $Ann_R(K)\subseteq Ann_R(N)$. Therefore, $\mathfrak{M}_{Ann_R(K)}\subseteq Ann_R(N)$ by assumption.  Now we have $Soc_{(0:_MAnn_R(K))}=(0:_M\mathfrak{M}_{Ann_R(K)})$ by Theorem \ref{t0.5}. It follows that
$$
N=(0:_MAnn_R(N))\subseteq Soc_{(0:_MAnn_R(K))}=Soc_{K}.
$$
\end{proof}

\begin{cor}\label{c0.7}
\begin{itemize}
\item [(a)] Let $M$ be a finitely generated strong comultiplication $R$-module and $I$ be a strong  $z$-ideal of $R$. Then $(0:_MI)$ is a $dsz$-submodule of $M$.
\item [(b)] Let $N$ be a $dsz$-submodule of an $R$-module $M$. Then $((0:_MAnn_R(K):_RN)$ is a $z$-ideal of $R$ for each submodule $K$ of $M$. In particular, if $Soc_M=M$, then $((0:_MAnn_R(K):_RM)$ is a $z$-ideal of $R$ for each submodule $K$ of $M$.
\item [(c)] Let $N$ be a $dsz$-submodule of a comultiplication $R$-module $M$. Then $(K:_RN)$ is a $z$-ideal of $R$ for each submodule $K$ of $M$. In particular, if $Soc_M=M$, then $(K:_RM)$ is a $z$-ideal of $R$ for each submodule $K$ of $M$.
\end{itemize}
 \end{cor}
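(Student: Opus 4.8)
The plan is to derive all three parts from Corollary~\ref{c0.6} --- which, for a faithful finitely generated comultiplication module, identifies the $dsz$-submodules with the non-zero submodules whose annihilator is a strong $z$-ideal --- together with two elementary facts: for a (prime, in particular maximal) ideal $\mathfrak m$ and ideals $\mathfrak a,\mathfrak b$ one has $\mathfrak a\mathfrak b\subseteq\mathfrak m$ iff $\mathfrak a\subseteq\mathfrak m$ or $\mathfrak b\subseteq\mathfrak m$, and in a comultiplication module $K=(0:_MAnn_R(K))$ for every submodule $K$. For part (a) I would first note that a strong comultiplication module is faithful, since the DAC condition applied to the zero ideal gives $0=Ann_R((0:_M0))=Ann_R(M)$; hence $M$ is a faithful finitely generated comultiplication module and the converse direction of Corollary~\ref{c0.6} is available. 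Setting $N:=(0:_MI)$ and applying the DAC condition to $I$ gives $Ann_R(N)=I$, a strong $z$-ideal by hypothesis, and since strong $z$-ideals are proper we get $N\ne 0$; Corollary~\ref{c0.6} then shows $(0:_MI)$ is a $dsz$-submodule of $M$.

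For part (b), the key point is the identity $((0:_MAnn_R(K)):_RN)=(Ann_R(N):_RAnn_R(K))$, which follows directly from the definitions ($a\in((0:_MAnn_R(K)):_RN)\iff Ann_R(K)\cdot aN=0\iff a\,Ann_R(K)\subseteq Ann_R(N)$). So it suffices to prove that if $\mathfrak a$ is a strong $z$-ideal of $R$ and $\mathfrak b$ is any ideal, then $(\mathfrak a:_R\mathfrak b)$ is a $z$-ideal; here $\mathfrak a=Ann_R(N)$ is a strong $z$-ideal by Corollary~\ref{c0.6}. To see this, suppose $a,b\in R$ are contained in exactly the same maximal ideals and $a\in(\mathfrak a:_R\mathfrak b)$, i.e. $a\,\mathfrak b\subseteq\mathfrak a$. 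By the elementary fact above, $a\,\mathfrak b$ and $b\,\mathfrak b$ are contained in exactly the same maximal ideals (because $a$ and $b$ are), so the defining property of the strong $z$-ideal $\mathfrak a$ forces $b\,\mathfrak b\subseteq\mathfrak a$, i.e. $b\in(\mathfrak a:_R\mathfrak b)$. For the ``in particular'' statement, when $Soc_M=M$ the module $M$ is itself a $dsz$-submodule of $M$ (the only submodule of $M$ containing $M$ is $M$), so one applies the first assertion with $N=M$.

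For part (c), since $M$ is a comultiplication module we have $K=(0:_MAnn_R(K))$ for every submodule $K$, whence $(K:_RN)=((0:_MAnn_R(K)):_RN)$, which is a $z$-ideal by part (b); the ``in particular'' is again the specialization $N=M$, valid because $Soc_M=M$ makes $M$ a $dsz$-submodule. The only step that is not purely formal is the colon-ideal/maximal-ideal-support bookkeeping in part (b); once that is established, (a) and (c) are immediate from (b) and Corollary~\ref{c0.6}, so I expect no real obstacle beyond keeping the ideal manipulations straight. (One reads the statements with the usual convention in the degenerate situations where a colon ideal equals $R$ --- for instance $K=M$ in (b), where $Ann_R(M)\subseteq Ann_R(N)$ holds trivially.)
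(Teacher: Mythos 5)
Your proof is correct and follows essentially the same route as the paper: parts (a) and (c) are reduced to Corollary~\ref{c0.6} and part (b) via the DAC/comultiplication identity $K=(0:_MAnn_R(K))$, and part (b) rests on Corollary~\ref{c0.6} together with the fact that $(\mathfrak a:_R\mathfrak b)$ is a $z$-ideal whenever $\mathfrak a$ is a strong $z$-ideal, which the paper cites from Mason's Proposition~1.3 and you prove directly. Incidentally, your form of the key identity, $((0:_MAnn_R(K)):_RN)=(Ann_R(N):_RAnn_R(K))$, has the colon arguments in the correct order, whereas the paper's displayed formula writes $(Ann_R(K):_RAnn_R(N))$, an apparent typo.
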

\begin{proof}
(a) Since $M$ is a strong comultiplication $R$-module, $I=Ann_R((0:_MI))$. Now the result follows from Corollary  \ref{c0.6}.

(b) As $N$ is a $dsz$-submodule, $Ann_R(N)$ is a strong $z$-ideal of $R$ by Corollary  \ref{c0.6}. Let $K$ be a submodule of $M$. Then by \cite[Proposition 1.3]{MR321915}, $(Ann_R(K):_RAnn_R(N))$ is a $z$-ideal of $R$. Now
$$
((0:_MAnn_R(K)):_RN)=(Ann_R(K):_RAnn_R(N))
$$
implies that $((0:_MAnn_R(K)):_RN)$ is a $z$-ideal of $R$. Now the last assertion follows from the fact that $Soc_M$ is a  $dsz$-submodule of $M$ by Remark \ref{r0.3}.

(c) As $M$ is a comultiplication $R$-module, $K=(0:_MAnn_R(K))$. Now the result follows from part (b)
\end{proof}

Let $M$ be an $R$-module. A non-zero submodule $S$ of $M$ is said to be \emph{second} if for each $a \in R$, the homomorphism $ S \stackrel {a} \rightarrow S$ is either surjective or zero. In this case, $Ann_R(S)$ is a prime ideal of $R$ \cite{MR1879449}.  A second submodule $S$ of $M$ is said to be a \emph{maximal second submodule} of $M$, if there does not exist a second submodule $N$ of $M$ such that $S \subset N \subset M$ \cite{MR3073398}.
The set of all maximal second submodules of $M$ will be denoted by $Max^s(M)$.
\begin{prop}\label{p0.10}
Let $M$ be a faithful finitely generated comultiplication $R$-module.
If $S$ is maximal in the class of second submodules contained a $sdz$-submodule $N$, then
$S$ is a strongly $dsz$-submodule.
\end{prop}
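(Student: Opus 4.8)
The plan is to translate the whole statement through annihilators, where the maximality of $S$ turns into the minimality of a prime ideal, and then to exploit that minimal primes are automatically of the required type. Throughout I would use that, since $M$ is faithful, finitely generated and comultiplication (hence satisfies the DAC), the assignment $L\mapsto Ann_R(L)$ is an inclusion-reversing bijection from the submodules of $M$ onto the ideals of $R$, with inverse $I\mapsto(0:_MI)$; in particular $S=(0:_M Ann_R(S))$ and, as recorded just after the definition of a second submodule, $\mathfrak p:=Ann_R(S)$ is prime. Conversely $(0:_M\mathfrak q)$ is a second submodule for every prime $\mathfrak q$ at which it is nonzero, so under this dictionary the second submodules of $M$ contained in $N$ correspond, inclusion-reversingly, exactly to the prime ideals of $R$ containing $Ann_R(N)$.

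With this dictionary in hand, the hypothesis that $S$ is maximal among the second submodules contained in $N$ says precisely that $\mathfrak p=Ann_R(S)$ is minimal among the primes of $R$ containing $Ann_R(N)$; that is, $\mathfrak p$ is a minimal prime over $Ann_R(N)$. Because $N$ is an $sdz$-submodule, I would invoke the dual-strong-$z^\circ$ analogue of Corollary~\ref{c0.6} to see that $Ann_R(N)$ is a strong $z^\circ$-ideal, i.e. $Ann_R(N)=\mathfrak P_{Ann_R(N)}$ is an intersection of minimal prime ideals of $R$. For an ideal of this shape every prime minimal over it is forced to be one of those minimal primes of $R$, so $\mathfrak p$ is in fact a minimal prime ideal of $R$.

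The decisive point is then purely ideal-theoretic and elementary: a minimal prime ideal $\mathfrak p$ of $R$ is a strong $z^\circ$-ideal, because the only minimal prime of $R$ containing $\mathfrak p$ is $\mathfrak p$ itself, whence $\mathfrak P_J\subseteq\mathfrak P_{\mathfrak p}=\mathfrak p$ for every ideal $J\subseteq\mathfrak p$. I would then apply the annihilator characterization once more, now in the direction that reads strong $z^\circ$-ideals off as strongly $dsz$-submodules (the converse half of the analogue of Corollary~\ref{c0.6}, available since $M$ is faithful, finitely generated and comultiplication), and conclude from $Ann_R(S)=\mathfrak p$ being a strong $z^\circ$-ideal that $S$ is a strongly $dsz$-submodule, as claimed.

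I expect the genuine obstacle to lie in the passage of the second paragraph from ``$\mathfrak p$ is minimal over $Ann_R(N)$'' to ``$\mathfrak p$ is a minimal prime of $R$''. This is the step that makes essential use of the hypothesis on $N$ together with the finiteness built into a finitely generated comultiplication module: one must know that the minimal primes of the strong $z^\circ$-ideal $Ann_R(N)$ are genuine minimal primes of $R$, which rests on the relevant family of minimal primes being finite, so that an ordinary prime-avoidance argument applies to the inclusion $Ann_R(N)=\bigcap\mathfrak q_i\subseteq\mathfrak p$. Once this is secured, the remaining implications are formal consequences of the annihilator dictionary and of the trivial fact that minimal primes are strong $z^\circ$-ideals.
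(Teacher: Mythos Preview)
Your overall architecture---pass to annihilators, locate the ideal-theoretic content, and then come back via the converse of the annihilator characterization---matches the paper's, but you have misidentified which ideal-theoretic notion is in play. Proposition~\ref{p0.10} sits in the section on dual $z$-submodules, and both ``$sdz$'' and ``strongly $dsz$'' are (typographically garbled) references to $dsz$-submodules in the sense of Definition~\ref{d00.1}(b). Accordingly the relevant annihilator characterization is Corollary~\ref{c0.6} itself, which ties $dsz$-submodules to \emph{strong $z$-ideals} (built from maximal ideals), not to strong $z^\circ$-ideals (built from minimal primes). Your second and third paragraphs therefore establish a different statement---essentially that $S$ is a $dsz^\circ$-submodule---which may well be true but is not what is asked.

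The paper's argument runs: by Corollary~\ref{c0.6}, $Ann_R(N)$ is a strong $z$-ideal; by the dictionary you describe (this is \cite[Proposition~2.1]{F407}), $\mathfrak p=Ann_R(S)$ is a prime ideal minimal over $Ann_R(N)$; and then \cite[Theorem~1.1]{MR321915} (Mason) supplies the genuine ideal-theoretic input, namely that a prime minimal over a $z$-ideal is again a (strong) $z$-ideal. The converse direction of Corollary~\ref{c0.6} finishes. No finiteness of a family of primes and no prime-avoidance is involved: Mason's theorem applies directly to the single prime $\mathfrak p$ and the $z$-ideal beneath it. The worry in your final paragraph arises solely from having aimed at the wrong target (``$\mathfrak p$ is a minimal prime of $R$''), which is neither needed nor the correct intermediate conclusion in the $z$-ideal framework.
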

\begin{proof}
This follows from Corollary \ref{c0.6}, \cite[Proposition 2.1]{F407}, and \cite[Theorem 1.1]{MR321915}.
\end{proof}

\begin{cor}\label{c0.11}
The maximal second submodules of a faithful finitely generated comultiplication $R$-module $M$ are $dsz$-submodules.
\end{cor}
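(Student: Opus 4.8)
The plan is to read this off from Proposition \ref{p0.10}. Let $S$ be a maximal second submodule of the faithful finitely generated comultiplication $R$-module $M$. Proposition \ref{p0.10} guarantees that a submodule which is maximal in the class of second submodules contained in \emph{some} $dsz$-submodule $N$ is itself a $dsz$-submodule, so it suffices to exhibit a single $dsz$-submodule $N$ with $S\subseteq N$: granting that, $S$ is maximal among \emph{all} second submodules of $M$, hence a fortiori maximal among the second submodules contained in $N$, and Proposition \ref{p0.10} applies. The natural candidate is $N=Soc_M$, so the whole corollary reduces to the inclusion $S\subseteq Soc_M$, i.e.\ to the assertion that a maximal second submodule of $M$ is semisimple.

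To obtain $S\subseteq Soc_M$ I would in fact show that $S$ is a minimal submodule of $M$. Since $S$ is second, $\mathfrak{p}:=Ann_R(S)$ is a prime ideal of $R$, and since $M$ is a comultiplication module we have $S=(0:_M\mathfrak{p})$ (more generally, $N\mapsto Ann_R(N)$ is injective on submodules of $M$, because $N=(0:_MAnn_R(N))$). The key sub-claim — and the step I expect to be the main obstacle — is that $\mathfrak{p}$ is a \emph{maximal} ideal of $R$. The idea is that if $\mathfrak{p}$ were not maximal, pick a maximal ideal $\mathfrak{m}\supsetneq\mathfrak{p}$ and $a\in\mathfrak{m}\setminus\mathfrak{p}$; then $a\notin Ann_R(S)$ and $S$ second give $aS=S$, so the image of $a$ is a nonzero non-unit of the domain $R/\mathfrak{p}$ acting surjectively on the nonzero faithful $(R/\mathfrak{p})$-module $S=(0:_M\mathfrak{p})$, and one argues that a submodule of a faithful finitely generated comultiplication module cannot carry such a "divisible" structure (this is where the hypotheses faithful, finitely generated and comultiplication are all used, via the double-annihilator/structural results \cite[Theorem 7]{MR3934877} and \cite[Proposition 2.1]{F407}, as in the proof of Theorem \ref{t0.5}). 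Once $\mathfrak{p}$ is maximal, $S=(0:_M\mathfrak{p})$ is a minimal submodule of $M$ by \cite[Theorem 7(f)]{MR3934877}; in particular $S\subseteq Soc_M$, so $Soc_M\ne 0$ and, by Remark \ref{r0.3}, $Soc_M$ is a $dsz$-submodule of $M$. Then $S$ is maximal among the second submodules contained in $Soc_M$, and Proposition \ref{p0.10} yields that $S$ is a $dsz$-submodule.

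As an alternative conclusion, once $\mathfrak{p}=Ann_R(S)$ is known to be a maximal ideal — hence trivially a strong $z$-ideal of $R$ — Corollary \ref{c0.6} applies directly (as $M$ is faithful finitely generated comultiplication) to give that $S$ is a $dsz$-submodule, bypassing Proposition \ref{p0.10}. In either route the single nontrivial input is the maximality of $Ann_R(S)$, equivalently the minimality of $S$; everything else is bookkeeping with annihilators together with the already established Remark \ref{r0.3} and Theorem \ref{t0.5}.
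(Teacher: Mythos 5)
The paper prints no argument for this corollary; it is meant to drop out of Proposition \ref{p0.10}, whose own justification is the chain: the annihilator of a maximal second submodule of a faithful comultiplication module is a \emph{minimal prime} ideal of $R$ (this is what the citation of \cite[Proposition 2.1]{F407} supplies), minimal primes are strong $z$-ideals by \cite[Theorem 1.1]{MR321915}, and Corollary \ref{c0.6} then converts ``$Ann_R(S)$ is a strong $z$-ideal'' into ``$S$ is a $dsz$-submodule'' under the faithful finitely generated comultiplication hypotheses. Your second route is structurally the same, except that where the paper only needs $Ann_R(S)$ to be a minimal prime, you insist on the much stronger statement that $Ann_R(S)$ is a \emph{maximal} ideal, equivalently that $S$ is a simple submodule of $M$.

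That sub-claim is exactly where your argument has a genuine gap, and you flag it yourself as ``the main obstacle.'' The sketch you offer does not close it: the Nakayama-style step (a nonzero non-unit of $R/\mathfrak{p}$ cannot act surjectively on $S$) requires $S$ to be finitely generated over $R/\mathfrak{p}$, which is not given --- a submodule of a finitely generated module need not be finitely generated --- and neither \cite[Theorem 7]{MR3934877} nor \cite[Proposition 2.1]{F407} directly excludes a divisible second submodule with non-maximal prime annihilator. (It may be that any ring admitting a faithful finitely generated comultiplication module is forced to be zero-dimensional, in which case your claim is true, but that is a structure theorem you would have to prove and nothing in the paper provides it.) The repair is to aim lower: you do not need $S$ to be minimal, nor do you need $Soc_M$ or Theorem \ref{t0.5} at all. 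Maximality of $S$ among second submodules of the comultiplication module $M$ already makes $Ann_R(S)$ a minimal prime of $R$; that ideal is a strong $z$-ideal by the result of Mason the paper invokes; and Corollary \ref{c0.6} finishes. Your first route through $Soc_M$ and Proposition \ref{p0.10} is then an unnecessary detour, and in any case it inherits the same gap, since the inclusion $S\subseteq Soc_M$ again rests on the unproved simplicity of $S$.
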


\begin{rem}\label{r2.1}\cite[Remark 2.1]{MR2821719}
Let $N$ and $K$ be two submodules of an $R$-module $M$. To prove $N\subseteq K$, it is enough to show that if $L$ is a completely irreducible submodule of $M$ such that $K\subseteq L$, then $N\subseteq L$.
\end{rem}
Let $N$ and $K$ be two submodules of an $R$-module $M$. The \emph{coproduct} of $N$ and $K$ is defined by $(0:_MAnn_R(N)Ann_R(K))$  and denoted by $C(NK)$ \cite{MR3934877}.
\begin{prop}\label{p0.8}
Let $N$ be a non-zero submodule of a comultiplication $R$-module $M$. Then the following assertions are equivalent.
\begin{itemize}
\item [(a)] $N$ is a $dsz$-submodule of $M$.
\item [(b)]
If $K$ is a submodule of $M$ with $N \subseteq K$, $H$ a submodule of $M$, and $\mathcal{M^*}(K) \subseteq \mathcal{M^*}(H)$, then $N \subseteq H$.
\item [(c)]
If $K$ is a submodule of $M$ with $N \subseteq K$, $H$ a submodule of $M$, and $\mathcal{M^*}(H)= \mathcal{M^*}(K)$, then $N \subseteq H$.
\end{itemize}
\end{prop}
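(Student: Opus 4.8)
The plan is to reduce everything to Proposition \ref{l00.2}(b) by means of the obvious dictionary between the socle of a submodule and its set of minimal submodules. The first step is to record the elementary equivalence: for arbitrary submodules $A$ and $B$ of $M$ one has $\mathcal{M^*}(A)\subseteq \mathcal{M^*}(B)$ if and only if $Soc_A\subseteq Soc_B$. The implication ``$\Rightarrow$'' is immediate, since $Soc_A$ is by definition the sum of the members of $\mathcal{M^*}(A)$, each of which then lies in $Soc_B$. For ``$\Leftarrow$'', if $S\in \mathcal{M^*}(A)$ then $S\subseteq Soc_A\subseteq Soc_B\subseteq B$, and since $S$ is a minimal submodule of $M$ this says precisely that $S\in \mathcal{M^*}(B)$. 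Specializing to $A=Soc_K$ and $B=K$ gives, in particular, $\mathcal{M^*}(Soc_K)=\mathcal{M^*}(K)$ for every submodule $K$ of $M$.

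Granting this, I would prove the cycle (a)$\Rightarrow$(b)$\Rightarrow$(c)$\Rightarrow$(a). For (a)$\Rightarrow$(b): assume $N$ is a $dsz$-submodule, $N\subseteq K$, $H$ is a submodule of $M$, and $\mathcal{M^*}(K)\subseteq \mathcal{M^*}(H)$; then $Soc_K\subseteq Soc_H$ by the first step, so $N\subseteq H$ by Proposition \ref{l00.2}(b). The implication (b)$\Rightarrow$(c) is trivial, since the hypothesis $\mathcal{M^*}(H)=\mathcal{M^*}(K)$ in particular yields $\mathcal{M^*}(K)\subseteq \mathcal{M^*}(H)$.

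Finally, for (c)$\Rightarrow$(a), let $K$ be an arbitrary submodule of $M$ with $N\subseteq K$ and put $H=Soc_K$. By the first step $\mathcal{M^*}(H)=\mathcal{M^*}(Soc_K)=\mathcal{M^*}(K)$, so the hypothesis of (c) applies and gives $N\subseteq H=Soc_K$. As $K$ was an arbitrary submodule containing $N$, this is exactly the assertion that $N$ is a $dsz$-submodule of $M$. I do not anticipate any real obstacle: the substance is already in Proposition \ref{l00.2}(b) together with the socle/minimal-submodule dictionary, the only care needed being bookkeeping; indeed the argument goes through verbatim for an arbitrary $R$-module, the comultiplication hypothesis on $M$ being included here only for consistency with the rest of the section.
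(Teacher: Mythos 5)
Your proof is correct, and for the key implication $(c)\Rightarrow(a)$ it takes a genuinely different and simpler route than the paper. The paper fixes a completely irreducible submodule $L$ with $Soc_K\subseteq L$, forms the coproduct $C(KL)=(0:_MAnn_R(K)Ann_R(L))$, and argues (using that $M$ is a comultiplication module and that annihilators of minimal submodules are maximal, hence prime) that $\mathcal{M^*}(C(KL))=\mathcal{M^*}(L)$, so that $(c)$ applied to the pair $(C(KL),L)$ yields $N\subseteq L$; it then invokes Remark \ref{r2.1} to conclude $N\subseteq Soc_K$. You instead apply $(c)$ directly to the pair $(K,Soc_K)$, using the observation that $\mathcal{M^*}(Soc_K)=\mathcal{M^*}(K)$ holds trivially: one inclusion because $Soc_K\subseteq K$, the other because every minimal submodule contained in $K$ is by definition a summand of $Soc_K$. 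This identity, and your more general dictionary $\mathcal{M^*}(A)\subseteq\mathcal{M^*}(B)\Leftrightarrow Soc_A\subseteq Soc_B$, are verified correctly. The payoff of your argument is twofold: it eliminates the coproduct construction and the appeal to Remark \ref{r2.1}, and it shows that the comultiplication hypothesis is not needed for this equivalence — the proposition holds for an arbitrary $R$-module, as you note. The implications $(a)\Rightarrow(b)$ and $(b)\Rightarrow(c)$ are handled the same way in both arguments. I see no gap in your proposal.
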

\begin{proof}
$(a)\Rightarrow (b)$
Let $K$ be a submodule of $M$ with $N \subseteq K$, $H$ a submodule of $M$, and $\mathcal{M^*}(K) \subseteq \mathcal{M^*}(H)$. Then $Soc_K \subseteq Soc_H$ and by part (a),   $N \subseteq H$.

$(b)\Rightarrow (c)$ This is clear.

$(c)\Rightarrow (a)$
Let $K$ be a submodule of $M$ with $N \subseteq K$. We have to prove that $N \subseteq Soc_K$.
Let  $L$ be a completely irreducible submodule of $M$ such that $Soc_K\subseteq L$. Then $\mathcal{M^*}(K)\subseteq \mathcal{M^*}(L)$. We show that  $\mathcal{M^*}(C(KL))=\mathcal{M^*}(K)$. Suppose that exists a minimal submodule $T$ of $M$ contained in  $C(KL)$ but $T\not \subseteq L$. Then as $M$ is a comultiplication $R$-module we conclude that
$T \subseteq K$, so $T \in \mathcal{M^*}(K)\subseteq \mathcal{M^*}(L)$. Thus $T \subseteq L$, which is impossible. The reverse inclusion is clear. We conclude that
$\mathcal{M^*}(C(KL)) =\mathcal{M^*}(L)$ with $N \subseteq C(LK)$. By part (c),  $ N\subseteq L$. Now by Remark \ref{r2.1}, $N \subseteq Soc_K$,  as needed.
\end{proof}

\begin{cor}\label{c0.9}
Any $dsz$-submodule $N$ of a comultiplication $R$-module $M$ is equal to the sum of all the minimal submodules
contaned in it.
\end{cor}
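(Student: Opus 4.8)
The plan is to establish the two inclusions $Soc_N\subseteq N$ and $N\subseteq Soc_N$, where, as in the notation introduced just before Definition \ref{d00.1}, $Soc_N$ denotes the sum of all minimal submodules of $M$ contained in $N$ --- that is, exactly ``the sum of all the minimal submodules contained in it''. The inclusion $Soc_N\subseteq N$ is automatic: every minimal submodule of $M$ that is contained in $N$ is in particular a subset of $N$, hence so is their sum.

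For the reverse inclusion I would simply feed $K=N$ into the definition of a $dsz$-submodule. Since $N$ is a submodule of $M$ with $N\subseteq N$, Definition \ref{d00.1}(b) yields $N\subseteq Soc_N$ directly, and combining the two inclusions gives $N=Soc_N$, which is the assertion. If one prefers to argue within the comultiplication framework of the surrounding results, the same conclusion follows from Proposition \ref{p0.8}: take $K=N$ and $H=Soc_N$; a minimal submodule of $M$ is contained in $Soc_N$ precisely when it is contained in $N$, so $\mathcal{M^*}(H)=\mathcal{M^*}(K)$, and part (c) of that proposition forces $N\subseteq H=Soc_N$.

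I do not expect a genuine obstacle here; the proof is essentially a matter of unwinding definitions. The only points worth flagging are that Remark \ref{r0.3} guarantees $Soc_N\neq 0$, so that the identity $N=Soc_N$ is not vacuous for the non-zero submodule $N$, and that the comultiplication hypothesis --- although it is the standing assumption of this section and of Proposition \ref{p0.8} --- is not strictly needed for the direct argument via Definition \ref{d00.1}(b).
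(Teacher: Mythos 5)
Your proof is correct, and your primary argument is both simpler and more general than the paper's. The paper proves the nontrivial inclusion $N\subseteq Soc_N$ by taking an arbitrary completely irreducible submodule $L$ of $M$ with $Soc_N\subseteq L$, observing that $\mathcal{M^*}(N)\subseteq \mathcal{M^*}(L)$, invoking Proposition \ref{p0.8} to conclude $N\subseteq L$, and then finishing with Remark \ref{r2.1}; this is essentially the second route you sketch, and it is the step at which the comultiplication hypothesis enters (Proposition \ref{p0.8} is stated only for comultiplication modules). Your main argument --- substituting $K=N$ directly into Definition \ref{d00.1}(b) to get $N\subseteq Soc_N$, with $Soc_N\subseteq N$ being trivial --- bypasses all of this and establishes the identity $N=Soc_N$ for an arbitrary $R$-module, so the comultiplication hypothesis in the statement is indeed superfluous, exactly as you flag. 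What the paper's longer route buys is only uniformity of presentation, packaging the corollary as an application of the characterization in Proposition \ref{p0.8}; your observation exposes it as an immediate consequence of the definition of a $dsz$-submodule.
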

\begin{proof}
Let $ Soc_N \subseteq L$ for some completely irreducible submodule $L$ of $M$. Then $\mathcal{M^*}(N) \subseteq \mathcal{M^*}(L)$. Thus  by Proposition \ref{p0.8}, $N\subseteq L$. This implies that $N \subseteq Soc_N$ by Remark \ref{r2.1}. The reverse inclusion is clear.\end{proof}

\begin{prop}\label{p0.13}
Let $N$ be a  $dz$-submodule (resp. $dsz$-submodule) of an $R$-module $M$. Then for each $r \in R$, $rN$ is a $dz$-submodule (resp. $dsz$-submodule) of $M$.
\end{prop}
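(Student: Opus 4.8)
The plan is to reduce the statement to a single observation: a $dz$-submodule (and hence, a fortiori, a $dsz$-submodule) of $M$ is automatically a sum of minimal submodules of $M$. Once this is available, passing from $N$ to $rN$ is transparent, since any element of $R$ carries a simple submodule either to $0$ or onto itself.

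First I would check that $N\subseteq Soc_M$. When $N$ is a $dsz$-submodule this is just the defining property applied to $K=M$. When $N$ is a $dz$-submodule, $N$ is a proper submodule of $M$ (if $N=M$ the defining condition of a $dz$-submodule carries no information, so that case is set aside); since a proper submodule of $M$ is an intersection of completely irreducible submodules, $N$ lies inside at least one completely irreducible submodule $L_0$ of $M$, whence $N\subseteq Soc_{L_0}\subseteq Soc_M$. Now $Soc_M$ is a semisimple module, so its submodule $N$ is semisimple; thus $N=\sum_{i\in I}S_i$ for some family of minimal (equivalently, simple) submodules $S_i$ of $M$, each contained in $N$ — equivalently, $N=Soc_N$.

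Next, for a fixed $r\in R$, I would write $rN=\sum_{i\in I}rS_i$ and note that, since each $S_i$ is simple, $rS_i$ equals $0$ or $S_i$. Let $K$ be any submodule of $M$ with $rN\subseteq K$. For each index $i$: if $rS_i=0$ then $rS_i\subseteq Soc_K$ trivially; if $rS_i=S_i$ then $S_i=rS_i\subseteq rN\subseteq K$, so $S_i$ is a minimal submodule of $M$ contained in $K$, whence $S_i\subseteq Soc_K$. Summing over $i$ yields $rN\subseteq Soc_K$. For the $dsz$-part this is exactly the required conclusion; for the $dz$-part it suffices to run the same step only for completely irreducible $K$, which is a special case. (The assertion is of course read under the tacit hypothesis $rN\neq0$, a $dz$- or $dsz$-submodule being nonzero by definition.)

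The only step calling for any care is the inclusion $N\subseteq Soc_M$ in the $dz$-case, which rests on the fact that a proper submodule sits inside some completely irreducible submodule; the rest is just the elementary remark that an element of $R$ sends a simple submodule to $0$ or to itself. In particular, neither the comultiplication nor the coreducedness hypothesis on $M$ plays any role here.
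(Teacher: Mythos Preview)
Your proof is correct and takes a genuinely different route from the paper's. The paper argues by pulling back along multiplication by $r$: given a completely irreducible $L$ with $rN\subseteq L$, it passes to $(L:_Mr)$, cites an external lemma to the effect that $(L:_Mr)$ is again completely irreducible, applies the $dz$-hypothesis to obtain $N\subseteq Soc_{(L:_Mr)}$, and then verifies the inclusion $Soc_{(L:_Mr)}\subseteq(Soc_L:_Mr)$ to conclude $rN\subseteq Soc_L$. You instead establish the structural fact $N=Soc_N$ up front, after which $rN\subseteq Soc_K$ is immediate from the observation that multiplication by $r$ kills or fixes each simple summand. Your argument thus avoids the external lemma on colon submodules and, as a byproduct, shows that every proper $dz$-submodule is semisimple --- something the paper only records for $dsz$-submodules of comultiplication modules (Corollary~\ref{c0.9}). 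The paper's approach, in turn, does not need to set aside the case $N=M$. Both proofs ultimately rest on the same elementary remark that $r$ carries a simple submodule to $0$ or to itself.
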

\begin{proof}
Suppose that $r \in R$ and $L$ is a completely irreducible submodule of $M$ such that $rN \subseteq L$. Then $N \subseteq (L:_Mr)$. By \cite[Lemma 2.1]{MR3588217}, $(L:_Mr)$ is a completely irreducible submodule of $M$. So by assumption, $N \subseteq Soc_{(L:_Mr)}$. But one can see that, $Soc_{(L:_Mr)} \subseteq (Soc_L:_Mr)$. Therefore, $rN \subseteq Soc_L$. The proof for $dsz$-submodule is similar.
\end{proof}

\section{The dual of $z^\circ$-submodules and strong $z^\circ$-submodules}
The sum of all maximal second submodules of an $R$-module $M$ contained in a submodule $K$ of $M$ is denote by $\mathfrak{S}_K$.  In case $M$ does not contain any maximal second submodule which is contained in $K$, then $\mathfrak{S}_K$ is defined to be $(0)$. If $N$ is a submodule of $M$, define
 $V^s(N) = \{S \in Max^s(M) : S \subseteq N\}$.

\begin{defns}\label{d7.1}
Let $M$ be an $R$-module.
\begin{itemize}
\item [(a)] We say that a non-zero submodule $N$ of $M$ is a \textit{dual $z^\circ$-submodule}  or  \textit{$dz^\circ$-submodule} of $M$ if $N \subseteq \mathfrak{S}_{L}$  for all completely irreducible submodules $L$ of $M$ with $N \subseteq L$.
\item [(b)] We say that a non-zero submodule $N$ of $M$ is a \textit{dual strong $z^\circ$-submodule} or \textit{$dsz^\circ$-submodule} of $M$ if  $N\subseteq \mathfrak{S}_K$  for all submodules $K$ of $M$ with $N\subseteq K$. Also, we say that a non-zero ideal $I$ of $R$ is a \textit{$dsz^\circ$-ideal} if $I$ is a dual strong $z^\circ$-submodule of an $R$-module $R$.
\end{itemize}
\end{defns}

If $N$ is a $dsz^\circ$-submodule of $M$, then $N$ is a $dz^\circ$-submodule of $M$.
It is natural to ask the following question:
\begin{co}
Let $M$ be an $R$-module. Is every $dz^\circ$-submodule of $M$ a $dsz^\circ$-submodule of $M$?
\end{co}

\begin{rem}\label{r7.2}
Let $M$ be an $R$-module.
If $N$ is a $dz^\circ$-submodule of $M$, then for
each completely irreducible submodule $L$ of $M$ with $N \subseteq L$ we have  $\mathfrak{S}_L\not=0$, i.e. $L$ contains at least a maximal second submodule of $M$. Also, if $N$ is a $dsz^\circ$-submodule of $M$, then
for every submodule $K$ of $M$ with $N \subseteq K$ we have  $\mathfrak{S}_K\not=0$. Clearly, every maximal second submodule of $M$ is a $dsz^\circ$-submodule of $M$. Also, the family of $dsz^\circ$-submodule (resp.  $dz^\circ$-submodules) of $M$ is closed under intersection. Therefore, if $\mathfrak{S}_M\not=0$, then $\mathfrak{S}_M$ is a $dsz^\circ$-submodule of $M$ and it is contains every $dsz^\circ$-submodule of $M$.
\end{rem}

\begin{lem}\label{l7.1}
Let $M$ be an $R$-module.
A submodule $N$ of $M$ is a $dsz^\circ$-submodule if and only if $N=\cap_{K\in \Lambda} \mathfrak{S}_K$, where $\Lambda$ is the collection of all submodules of $M$ with $N \subseteq K$.
\end{lem}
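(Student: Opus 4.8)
The plan is to recognize that the asserted identity is little more than a restatement of the defining inequality in Definition \ref{d7.1}(b), combined with the trivial observation that $N$ itself is a member of the family $\Lambda$.

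First I would handle the forward implication. Assume $N$ is a $dsz^\circ$-submodule of $M$. Then by Definition \ref{d7.1}(b), for every $K\in\Lambda$ (that is, for every submodule $K$ of $M$ with $N\subseteq K$) we have $N\subseteq\mathfrak{S}_K$; intersecting over all such $K$ yields $N\subseteq\bigcap_{K\in\Lambda}\mathfrak{S}_K$. For the reverse containment, I would use that $N\in\Lambda$, so that $\bigcap_{K\in\Lambda}\mathfrak{S}_K\subseteq\mathfrak{S}_N$, together with the elementary fact that $\mathfrak{S}_N\subseteq N$ — since $\mathfrak{S}_N$ is, by its very definition, the sum of those maximal second submodules of $M$ that are contained in $N$, hence a sum of submodules of $N$. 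Combining the two inclusions gives $N=\bigcap_{K\in\Lambda}\mathfrak{S}_K$.

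For the converse, suppose $N=\bigcap_{K\in\Lambda}\mathfrak{S}_K$; as $N$ is non-zero by the standing convention of Definition \ref{d7.1}, it suffices to verify the containment condition. Given an arbitrary submodule $K_0$ of $M$ with $N\subseteq K_0$, we have $K_0\in\Lambda$, so $\mathfrak{S}_{K_0}$ occurs as one of the terms of the intersection defining $N$, whence $N=\bigcap_{K\in\Lambda}\mathfrak{S}_K\subseteq\mathfrak{S}_{K_0}$. Since $K_0$ was arbitrary, $N$ is a $dsz^\circ$-submodule of $M$. There is no genuine obstacle in this argument; the only point that merits a moment's attention is the inclusion $\mathfrak{S}_N\subseteq N$ invoked in the forward direction, which is immediate from the definition of $\mathfrak{S}_N$ (and remains valid, with $\mathfrak{S}_N=(0)$, in the degenerate case where $N$ contains no maximal second submodule, since every maximal second submodule is non-zero).
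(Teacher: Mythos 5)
Your argument is correct and is precisely the routine verification the paper omits with the remark ``This is straightforward'': the forward inclusion comes from the definition, the reverse from $N\in\Lambda$ together with $\mathfrak{S}_N\subseteq N$, and the converse is immediate. Your aside about the non-zero hypothesis is apt, since for $N=(0)$ the intersection identity holds vacuously while the definition excludes $(0)$, a small imprecision in the lemma's wording rather than in your proof.
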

\begin{proof}
This is straightforward.
\end{proof}

\begin{prop}\label{p7.1}
Let $N$ be a non-zero submodule of an $R$-module $M$. Then $N$ as an $R$-submodule is a $dz^\circ$-submodule (resp. $dsz^\circ$-submodule) if and only if as an $R/Ann_R(M)$-submodule is a $dz^\circ$-submodule (resp. $dsz^\circ$-submodule).
\end{prop}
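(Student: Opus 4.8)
The plan is to reduce everything to the single observation that passing from $R$ to $\bar R := R/Ann_R(M)$ changes neither the underlying abelian group $M$ nor the way scalars act on it, because every element of $Ann_R(M)$ acts as $0$. First I would record that the lattice of $R$-submodules of $M$ coincides with the lattice of $\bar R$-submodules of $M$: a subset $X\subseteq M$ is an $R$-submodule if and only if it is an $\bar R$-submodule, since $rx=\bar r x$ for all $r\in R$ and $x\in M$. As an immediate consequence, the completely irreducible submodules of $M$ over $R$ and over $\bar R$ are literally the same; this is purely a lattice-theoretic condition (equivalently, one may invoke that $L$ is completely irreducible exactly when $M/L$ is cocyclic, and cocyclicity is again intrinsic to the submodule lattice together with the scalar action, which is unchanged).

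Next I would verify that the second submodules are unaffected by the change of base ring. If $S$ is a non-zero submodule and $r\in R$, then the homothety $S\stackrel{r}{\rightarrow}S$ over $R$ is exactly the homothety $S\stackrel{\bar r}{\rightarrow}S$ over $\bar R$, because $rS=\bar rS$; hence $S$ is second over $R$ if and only if it is second over $\bar R$ (and moreover $Ann_R(S)/Ann_R(M)=Ann_{\bar R}(S)$). Maximality within the class of second submodules is again a condition on the submodule lattice, so $Max^s(M)$ does not depend on whether $M$ is viewed over $R$ or over $\bar R$. Therefore, for every submodule $K$ of $M$, the sum $\mathfrak{S}_K$ of all maximal second submodules of $M$ contained in $K$ is the same in both settings, and in particular $\mathfrak{S}_K=0$ over $R$ precisely when $\mathfrak{S}_K=0$ over $\bar R$.

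Finally I would simply unwind the two definitions. In the $dz^\circ$ case, $N$ is a $dz^\circ$-submodule over $R$ if and only if $N\subseteq \mathfrak{S}_L$ for every completely irreducible submodule $L$ with $N\subseteq L$; by the two preceding paragraphs both the collection of such $L$ and the associated submodules $\mathfrak{S}_L$ are identical to those computed over $\bar R$, so this holds if and only if $N$ is a $dz^\circ$-submodule over $\bar R$. The $dsz^\circ$ case is identical, with the quantification running over all submodules $K$ with $N\subseteq K$ instead of over completely irreducible ones. The only point needing any attention is checking that the notion of (maximal) second submodule is insensitive to replacing $R$ by $R/Ann_R(M)$, and that is immediate from $rS=\bar rS$; I do not expect any genuine obstacle, which is presumably why the analogous Proposition \ref{p0.4} is dispatched in one line.
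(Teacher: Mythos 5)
Your argument is correct and is exactly the routine verification the paper has in mind (the paper's own proof is just ``This is straightforward''): the submodule lattice, the completely irreducible submodules, the second submodules, and hence each $\mathfrak{S}_K$ are unchanged when $M$ is viewed over $R/Ann_R(M)$, so both definitions transfer verbatim. No issues.
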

\begin{proof}
This is straightforward.
\end{proof}

The intersection of all minimal prime ideals of $R$ containing an ideal $I$ of $R$ is denote by $\mathfrak{P}_I$.
\begin{rem}\label{rrr7.1}
Let $M$ be a faithful finitely generated comultiplication $R$-module. Then $(0:_M\mathfrak{P}_I)=\mathfrak{S}_{(0:_MI)}$ for each ideal $I$ of $R$ \cite[Theorem 2.3]{F407}.
\end{rem}

\begin{thm}\label{t7.3}
Let $M$ be a faithful finitely generated comultiplication $R$-module. Then $N$ is a $dsz^\circ$-submodule of $M$ if and only if $Ann_R(N)$ is a strong $z^\circ$-ideal of $R$.
\end{thm}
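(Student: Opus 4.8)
The plan is to mimic the structure of the proof of Corollary~\ref{c0.6}, replacing the role of maximal ideals and $\operatorname{Soc}$ by minimal prime ideals and $\mathfrak{S}$, and using Remark~\ref{rrr7.1} as the substitute for Theorem~\ref{t0.5}. Throughout we use that $M$ is a faithful finitely generated comultiplication $R$-module, so that $K=(0:_M\operatorname{Ann}_R(K))$ for every submodule $K$, and so that Remark~\ref{rrr7.1} gives $(0:_M\mathfrak{P}_I)=\mathfrak{S}_{(0:_MI)}$ for every ideal $I$ of $R$.

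First I would prove the forward direction. Suppose $N$ is a $dsz^\circ$-submodule of $M$, and let $I$ be an ideal of $R$ with $I\subseteq\operatorname{Ann}_R(N)$; I must show $\mathfrak{P}_I\subseteq\operatorname{Ann}_R(N)$. From $I\subseteq\operatorname{Ann}_R(N)$ we get $N\subseteq(0:_MI)$, so by hypothesis $N\subseteq\mathfrak{S}_{(0:_MI)}=(0:_M\mathfrak{P}_I)$, the last equality by Remark~\ref{rrr7.1}. Hence $\mathfrak{P}_I N=0$, i.e. $\mathfrak{P}_I\subseteq\operatorname{Ann}_R(N)$, which is exactly the defining condition for $\operatorname{Ann}_R(N)$ to be a strong $z^\circ$-ideal (note $\operatorname{Ann}_R(N)$ is proper since $N\neq 0$).

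For the converse, assume $\operatorname{Ann}_R(N)$ is a strong $z^\circ$-ideal, and let $K$ be any submodule of $M$ with $N\subseteq K$; I must show $N\subseteq\mathfrak{S}_K$. From $N\subseteq K$ we get $\operatorname{Ann}_R(K)\subseteq\operatorname{Ann}_R(N)$, so by the strong $z^\circ$-hypothesis applied to the ideal $I=\operatorname{Ann}_R(K)$ we obtain $\mathfrak{P}_{\operatorname{Ann}_R(K)}\subseteq\operatorname{Ann}_R(N)$. Applying $(0:_M-)$ reverses inclusions, so
$$
N=(0:_M\operatorname{Ann}_R(N))\subseteq (0:_M\mathfrak{P}_{\operatorname{Ann}_R(K)})=\mathfrak{S}_{(0:_M\operatorname{Ann}_R(K))}=\mathfrak{S}_K,
$$
where the first equality uses that $M$ is a comultiplication module, the second-to-last uses Remark~\ref{rrr7.1}, and the last uses $K=(0:_M\operatorname{Ann}_R(K))$ again. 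Since $K$ was arbitrary, $N$ is a $dsz^\circ$-submodule.

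The only genuine subtlety is bookkeeping about when the various submodules are proper and when the intersections defining $\mathfrak{P}_I$ and $\mathfrak{S}_K$ are taken over nonempty families; the faithfulness and finite generation hypotheses, together with the comultiplication property, are what guarantee Remark~\ref{rrr7.1} holds as stated (including the degenerate cases, where both sides are $M$ or $0$ accordingly), so no separate case analysis should be needed beyond invoking that remark. I do not expect a serious obstacle here: the statement is essentially the $z^\circ$-analogue of Corollary~\ref{c0.6}, and the proof is a direct translation via the order-reversing correspondence $I\mapsto(0:_MI)$, $K\mapsto\operatorname{Ann}_R(K)$.
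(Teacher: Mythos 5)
Your proposal is correct and follows essentially the same route as the paper's own proof: both directions use $N\subseteq(0:_MI)$ together with Remark~\ref{rrr7.1} to pass between $\mathfrak{S}_{(0:_MI)}$ and $(0:_M\mathfrak{P}_I)$, and the converse uses $K=(0:_M\operatorname{Ann}_R(K))$ exactly as you describe. No substantive differences.
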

\begin{proof}
Let $I$ be an ideal of $R$ such that $I\subseteq Ann_R(N)$. Then $N \subseteq (0:_MI)$ and so by assumption, $N \subseteq \mathfrak{S}_{(0:_MI)}$. Hence  $N \subseteq (0:_M\mathfrak{P}_I)$ by Remark \ref{rrr7.1}. It follows that
$\mathfrak{P}_I \subseteq Ann_R(N)$ and so $Ann_R(N)$ is a strong $z^\circ$-ideal of $R$. For converse, let $K$ be a submodule of $M$ such that $N\subseteq K$. Then $Ann_R(K)\subseteq Ann_R(N)$. Now by assumption, $\mathfrak{P}_{Ann_R(K)}\subseteq Ann_R(N)$. By Remark \ref{rrr7.1}, $\mathfrak{S}_{(0:_MAnn_R(K))}=(0:_M\mathfrak{P}_{Ann_R(K)})$. Therefore,
$$
N=(0:_MAnn_R(N))\subseteq \mathfrak{S}_{(0:_MAnn_R(K))}=\mathfrak{S}_{K}.
$$
\end{proof}

\begin{cor}\label{c7.3}
Let $M$ be a finitely generated strong comultiplication $R$-module.  If $I$ is a strong $z^\circ$-ideal of $R$, then $(0:_MI)$ is a $dsz^\circ$-submodule of $M$.
\end{cor}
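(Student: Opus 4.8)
The plan is to derive this corollary directly from Theorem~\ref{t7.3}, in exact parallel with the proof of Corollary~\ref{c0.7}(a). The one hypothesis of Theorem~\ref{t7.3} that is not literally among the stated assumptions is faithfulness, so the first step is to observe that a strong comultiplication module is automatically faithful. This is immediate from the DAC condition: applying the identity $I=Ann_R((0:_MI))$ to the ideal $I=(0)$ and using that $(0:_M(0))=M$, one gets $Ann_R(M)=(0)$. Hence $M$ is a faithful finitely generated comultiplication $R$-module, and Theorem~\ref{t7.3} applies to it.

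Next I would set $N=(0:_MI)$. Since $M$ satisfies the DAC condition, $Ann_R(N)=Ann_R((0:_MI))=I$. Because $I$ is a strong $z^\circ$-ideal it is in particular a proper ideal of $R$, so $Ann_R(N)=I\neq R$; this forces $N\neq(0)$, since otherwise $Ann_R(N)$ would be all of $R$. Thus $N$ is a genuine non-zero submodule of $M$ whose annihilator is exactly the strong $z^\circ$-ideal $I$.

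Finally, apply Theorem~\ref{t7.3} in the direction ``$Ann_R(N)$ a strong $z^\circ$-ideal $\Rightarrow$ $N$ a $dsz^\circ$-submodule'': we conclude that $N=(0:_MI)$ is a $dsz^\circ$-submodule of $M$, which is the assertion. I do not expect a real obstacle here; the only point requiring any attention is the verification that $M$ is faithful, and even that is a one-line consequence of DAC. The substance of the statement is already contained in Theorem~\ref{t7.3} together with the definition of a strong comultiplication module, so the proof is essentially a formal substitution.
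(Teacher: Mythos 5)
Your proposal is correct and follows the same route as the paper: use the DAC condition to get $Ann_R((0:_MI))=I$ and then invoke Theorem~\ref{t7.3}. The additional checks you supply (that DAC applied to the zero ideal forces $M$ to be faithful, and that $(0:_MI)\neq 0$ because $I$ is proper) are details the paper leaves implicit, and they are verified correctly.
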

\begin{proof}
As $M$ is a strong comultiplication $R$-module, $I=Ann_R((0:_MI))$. Now the result follows from Theorem \ref{t7.3}.
\end{proof}

\begin{lem}\label{ll0701.14}
Let $M$ ba an $R$-module. Then $\mathfrak{S}_{(K:_Mr)} \subseteq (\mathfrak{S}_{K}:_Mr)$ for each submodule $K$ of $M$ and $r \in R$.
\end{lem}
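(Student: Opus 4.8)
The plan is to rewrite the desired inclusion in the equivalent form $r\,\mathfrak{S}_{(K:_Mr)} \subseteq \mathfrak{S}_{K}$, since for any submodule $N$ of $M$ one has $N \subseteq (\mathfrak{S}_{K}:_Mr)$ precisely when $rN \subseteq \mathfrak{S}_{K}$. By definition $\mathfrak{S}_{(K:_Mr)}$ is the sum of all maximal second submodules $S$ of $M$ with $S \subseteq (K:_Mr)$, so $r\,\mathfrak{S}_{(K:_Mr)} = \sum_{S} rS$ over those $S$, and it suffices to prove $rS \subseteq \mathfrak{S}_{K}$ for each such $S$ (the degenerate case in which $(K:_Mr)$ contains no maximal second submodule is trivial, since then $\mathfrak{S}_{(K:_Mr)} = 0$).

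Fix a maximal second submodule $S \subseteq (K:_Mr)$, so that $rS \subseteq K$. Here I would invoke the definition of a second submodule: the homothety $S \xrightarrow{\,r\,} S$ is either zero or surjective, i.e.\ $rS = 0$ or $rS = S$. If $rS = 0$, then $rS = 0 \subseteq \mathfrak{S}_{K}$ and we are done. If $rS = S$, then $S = rS \subseteq K$, so $S$ is itself a maximal second submodule of $M$ contained in $K$; hence $S$ is one of the summands defining $\mathfrak{S}_{K}$, giving $rS = S \subseteq \mathfrak{S}_{K}$. In either case $rS \subseteq \mathfrak{S}_{K}$.

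Summing over all maximal second submodules $S \subseteq (K:_Mr)$ yields $r\,\mathfrak{S}_{(K:_Mr)} \subseteq \mathfrak{S}_{K}$, which is exactly $\mathfrak{S}_{(K:_Mr)} \subseteq (\mathfrak{S}_{K}:_Mr)$, as claimed. I expect no genuine obstacle here: the only point requiring care is remembering that the relevant object is a \emph{sum} (so it is enough to argue on each summand) and correctly applying the surjective-or-zero dichotomy in the definition of second submodule; the conventions $\mathfrak{S}_{(-)} = (0)$ in the absence of maximal second submodules make the boundary cases automatic.
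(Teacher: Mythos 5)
Your proof is correct and follows essentially the same route as the paper: fix a maximal second submodule $S \subseteq (K:_Mr)$, use the surjective-or-zero dichotomy to get $rS = 0$ or $rS = S \subseteq K$, conclude $rS \subseteq \mathfrak{S}_K$, and sum. You merely spell out the case analysis and the degenerate case a bit more explicitly than the paper does.
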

\begin{proof}
Let $K$ be a submodule of $M$ and $r \in R$. Suppose that $S$ is a maximal second submodule of $M$ such that $S \subseteq (K:_Mr)$. Then $rS\subseteq K$. As $S$ is second, $rS=0$ or $rS=S$. This implies that  $rS\subseteq \mathfrak{S}_{K}$ and so $S\subseteq (\mathfrak{S}_{K}:_Mr)$. Therefore, $\mathfrak{S}_{(K:_Mr)} \subseteq (\mathfrak{S}_{K}:_Mr)$.
\end{proof}

\begin{prop}\label{pp0701.14}
Let $N$ be a $dz^\circ$-submodule (resp.  $dsz^\circ$-submodule) of an $R$-module $M$. Then
for each $r \in R$, $rN$ is a $dz^\circ$-submodule (rasp. $dsz^\circ$-submodule) of $M$.
\end{prop}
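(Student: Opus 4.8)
The plan is to mimic the proof of Proposition \ref{p0.13}, replacing the socle $Soc$ by the sum of maximal second submodules $\mathfrak{S}$ and using Lemma \ref{ll0701.14} in place of the inclusion $Soc_{(L:_Mr)}\subseteq (Soc_L:_Mr)$.

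First I would treat the $dz^\circ$ case. Fix $r\in R$ and let $L$ be a completely irreducible submodule of $M$ with $rN\subseteq L$. Then $N\subseteq (L:_Mr)$, and by \cite[Lemma 2.1]{MR3588217} the submodule $(L:_Mr)$ is again completely irreducible. Since $N$ is a $dz^\circ$-submodule, this forces $N\subseteq \mathfrak{S}_{(L:_Mr)}$. Now Lemma \ref{ll0701.14} gives $\mathfrak{S}_{(L:_Mr)}\subseteq (\mathfrak{S}_L:_Mr)$, hence $N\subseteq (\mathfrak{S}_L:_Mr)$, that is, $rN\subseteq \mathfrak{S}_L$. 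As $L$ ranges over all completely irreducible submodules of $M$ containing $rN$, this shows $rN$ is a $dz^\circ$-submodule of $M$.

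For the $dsz^\circ$ case the argument is identical except that the completely irreducible submodule $L$ is replaced by an arbitrary submodule $K$ of $M$ with $rN\subseteq K$; the completely-irreducible hypothesis is not used, so the appeal to \cite[Lemma 2.1]{MR3588217} is dropped and Lemma \ref{ll0701.14} is applied directly to $K$ to get $N\subseteq \mathfrak{S}_{(K:_Mr)}\subseteq(\mathfrak{S}_K:_Mr)$, whence $rN\subseteq\mathfrak{S}_K$.

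The only subtlety is the degenerate case $rN=0$, which is ruled out by the standing convention that a $dz^\circ$-submodule (resp. $dsz^\circ$-submodule) is non-zero; I would simply assume $rN\neq 0$, exactly as is implicitly done in Proposition \ref{p0.13}. Beyond that there is no real obstacle: the whole statement is carried by Lemma \ref{ll0701.14} together with the stability of completely irreducible submodules under the colon operation, both already available, so the proof is short and essentially formal.
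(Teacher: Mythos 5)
Your argument is exactly the paper's: reduce to $N\subseteq (L:_Mr)$, invoke \cite[Lemma 2.1]{MR3588217} for complete irreducibility of the colon, apply the $dz^\circ$ hypothesis, and finish with Lemma \ref{ll0701.14}. The proposal is correct and takes essentially the same route, with the $dsz^\circ$ case handled by the same omission of the irreducibility step.
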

\begin{proof}
Suppose that $r \in R$ and $rN \subseteq L$ for some completely irreducible submodule $L$ of $M$. Then $N\subseteq (L:_Mr)$. By \cite[Lemma 2.1]{MR3588217}, $(L:_Mr)$ is a completely irreducible submodule of $M$. Now by assumption, $N \subseteq \mathfrak{S}_{(L:_Mr)}$. By Lemma \ref{ll0701.14}, $\mathfrak{S}_{(L:_Mr)}\subseteq (\mathfrak{S}_{L}:_Mr)$. Thus $N\subseteq (\mathfrak{S}_{L}:_Mr)$ and so $rN \subseteq \mathfrak{S}_{L}$. The proof for $dsz^\circ$-submodule is similar.
\end{proof}

In the following theorem, we characterize the $dsz^\circ$-submodules of an $R$-module $M$.
\begin{thm}\label{t7.6}
Let $N$ be a non-zero submodule of an $R$-module $M$. Then the following are equivalent:
\begin{itemize}
\item [(a)] $N$ is a $dsz^\circ$-submodule of $M$;
\item [(b)] For submodules $K$, $H$ of $M$, $\mathfrak{S}_K=\mathfrak{S}_H$ and $N \subseteq K$ imply that $N \subseteq H$;
\item [(c)] For submodules $K$, $H$ of $M$, $V^s(K)=V^s(H)$ and $N \subseteq K$ imply that $N \subseteq H$;
\item [(d)] If $K$ is a submodule of $M$ with $N \subseteq K$, $H$ a submodule of $M$, and $V^s(K) \subseteq V^s(H)$, then $N \subseteq H$.
\end{itemize}
\end{thm}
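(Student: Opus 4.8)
The plan is to establish the cyclic chain of implications $(a)\Rightarrow(b)\Rightarrow(c)\Rightarrow(d)\Rightarrow(a)$, using throughout the elementary observation that, directly from the definitions, $\mathfrak{S}_K=\sum_{S\in V^s(K)}S$ for every submodule $K$ of $M$ (with the convention that the empty sum is $(0)$).

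For $(a)\Rightarrow(b)$: if $K,H$ are submodules of $M$ with $\mathfrak{S}_K=\mathfrak{S}_H$ and $N\subseteq K$, then the $dsz^\circ$ hypothesis applied to $K$ gives $N\subseteq\mathfrak{S}_K=\mathfrak{S}_H\subseteq H$, the last containment because $\mathfrak{S}_H$ is a sum of submodules lying inside $H$. For $(b)\Rightarrow(c)$: whenever $V^s(K)=V^s(H)$ the sums $\mathfrak{S}_K$ and $\mathfrak{S}_H$ are taken over identical families, so $\mathfrak{S}_K=\mathfrak{S}_H$ and $(b)$ applies verbatim. For $(c)\Rightarrow(d)$: given $N\subseteq K$ and $V^s(K)\subseteq V^s(H)$, set $H'=K\cap H$; since a maximal second submodule lies in $K\cap H$ exactly when it lies in both $K$ and $H$, we have $V^s(H')=V^s(K)\cap V^s(H)=V^s(K)$, so $(c)$ applied to the pair $K,H'$ yields $N\subseteq H'\subseteq H$.

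The implication $(d)\Rightarrow(a)$ is the one worth spelling out, and it is where Remark \ref{r2.1} is used. Let $K$ be a submodule of $M$ with $N\subseteq K$; we must show $N\subseteq\mathfrak{S}_K$. By Remark \ref{r2.1} it suffices to verify that $N\subseteq L$ for every completely irreducible submodule $L$ of $M$ with $\mathfrak{S}_K\subseteq L$ (if $\mathfrak{S}_K=M$ there is nothing to prove, and otherwise $\mathfrak{S}_K$, being proper, is an intersection of such $L$'s). Fix such an $L$. Each $S\in V^s(K)$ satisfies $S\subseteq\mathfrak{S}_K\subseteq L$, hence $S\in V^s(L)$, so $V^s(K)\subseteq V^s(L)$; applying $(d)$ with $H=L$ gives $N\subseteq L$, as needed.

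I expect no genuine obstacle here: the implications $(a)\Rightarrow(b)\Rightarrow(c)\Rightarrow(d)$ are pure bookkeeping, the only nontrivial identity being $V^s(K\cap H)=V^s(K)\cap V^s(H)$ in $(c)\Rightarrow(d)$. The single point requiring a little thought is the return step $(d)\Rightarrow(a)$, where one must pass from a statement about the \emph{set} $V^s$ back to the \emph{sum} $\mathfrak{S}_K$; this is handled by the standard device of testing the containment $N\subseteq\mathfrak{S}_K$ against completely irreducible submodules via Remark \ref{r2.1}.
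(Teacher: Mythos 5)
Your proof is correct, but it closes the equivalences along a different route than the paper. The paper does not prove the single cycle $(a)\Rightarrow(b)\Rightarrow(c)\Rightarrow(d)\Rightarrow(a)$; it proves $(a)\Rightarrow(b)\Rightarrow(c)\Rightarrow(a)$ and, separately, $(a)\Rightarrow(d)\Rightarrow(c)$. The two steps you handle differently are exactly the two the paper handles differently. For the return to $(a)$, the paper's key observation is the identity $V^s(K)=V^s(\mathfrak{S}_K)$, so that taking $H=\mathfrak{S}_K$ in $(c)$ immediately yields $N\subseteq\mathfrak{S}_K$; your $(d)\Rightarrow(a)$ instead tests the containment $N\subseteq\mathfrak{S}_K$ against completely irreducible submodules via Remark \ref{r2.1}, which is valid but invokes heavier machinery than needed --- you could have applied $(d)$ directly with $H=\mathfrak{S}_K$, since $V^s(K)\subseteq V^s(\mathfrak{S}_K)$. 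For $(d)$, the paper derives it from $(a)$ using monotonicity ($V^s(K)\subseteq V^s(H)$ forces $\mathfrak{S}_K\subseteq\mathfrak{S}_H$), whereas you derive it from $(c)$ via the identity $V^s(K\cap H)=V^s(K)\cap V^s(H)$; your version is a nice self-contained reduction, the paper's is marginally shorter. Net effect: both arguments are sound and of comparable length; the paper's is slightly more economical because it never needs Remark \ref{r2.1} in this proof.
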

\begin{proof}
$(a)\Rightarrow (b)$
 Let  $K$, $H$ be two submodules of $M$ such that $\mathfrak{S}_K=\mathfrak{S}_H$ and $N \subseteq K$. By part (a), $N \subseteq \mathfrak{S}_K$. Thus $N \subseteq  \mathfrak{S}_H \subseteq H$.

 $(b)\Rightarrow (c)$
 Let  $K$, $H$ be submodules of $M$ such that $V^s(K)=V^s(H)$ and $N \subseteq K$. Then  $\mathfrak{S}_K=\mathfrak{S}_H$. Thus by part (b), $N \subseteq H$.

$(c)\Rightarrow (a)$
Let $K$ be a submodule of $M$ with $N \subseteq K$. One can see that $V^s(K)=V^s(\mathfrak{S}_K)$. Now by part (c), $N \subseteq \mathfrak{S}_K$.

$(a)\Rightarrow (d)$
Let $K$ be a submodule of $M$ with $N \subseteq K$, $H$ a submodule of $M$, and $V^s(K) \subseteq V^s(H)$. Then
$\mathfrak{S}_K \subseteq \mathfrak{S}_H$. Hence by part (a),   $N \subseteq H$.

$(d)\Rightarrow (c)$ This is clear.
\end{proof}

The following corollary gives some characterizations for $dsz^\circ$-submodules of a Noetherian coreduced comultiplication $R$-module $M$.
\begin{cor}\label{c9.6}
Let $N$ be a non-zero submodule of a Noetherian coreduced comultiplication $R$-module $M$. Then for submodules $K$ and $H$ of $M$ the following are equivalent:
\begin{itemize}
\item [(a)] $N$ is a $dsz^\circ$-submodule of $M$;
\item [(b)] $N$ is a sum of maximal second submodules of $M$;
\item [(c)] $Ann_R(N)$ is a strong $z^\circ$-ideal of $R$;
\item [(d)]  $\mathfrak{S}_K=\mathfrak{S}_H$ and $N \subseteq  K$ imply that $N  \subseteq H$;
\item [(e)] $V^s(K)=V^s(H)$ and $N \subseteq K$ imply that $N  \subseteq H$;
\item [(f)] $V^s(K) \subseteq V^s(H)$ and $N \subseteq K$ imply that $N \subseteq H$.
\item [(g)]  $(K:_RM)=(H:_RM)$ and $N  \subseteq  K$ imply that $N  \subseteq H$;
\item [(h)] For submodule $K$ of $M$, $N \subseteq K$ implies that $N \subseteq (K:_RM)M$.
\end{itemize}
\end{cor}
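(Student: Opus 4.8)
\textbf{Proof proposal for Corollary \ref{c9.6}.}

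The plan is to leverage the equivalences already established in Theorem \ref{t7.6} and the ring-theoretic bridge of Theorem \ref{t7.3}, and then close the loop with the extra structure provided by the comultiplication and coreduced hypotheses. First I would observe that the implications $(a)\Leftrightarrow(d)\Leftrightarrow(e)\Leftrightarrow(f)$ are immediate from Theorem \ref{t7.6}, since $\mathfrak{S}_K=\mathfrak{S}_H$ iff $V^s(K)=V^s(H)$ and these conditions are exactly parts (b),(c),(d) there; similarly $(a)\Leftrightarrow(c)$ is precisely Theorem \ref{t7.3}, using that a Noetherian coreduced comultiplication module over a ring with $Ann_R(M)$ quotiented out is faithful finitely generated (and Proposition \ref{p7.1} lets us pass to $R/Ann_R(M)$ if needed). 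So the real content is tying in (b), (g), and (h).

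For $(a)\Rightarrow(b)$ I would take a $dsz^\circ$-submodule $N$ and apply the definition with $K=N$ itself, giving $N\subseteq \mathfrak{S}_N$; since $\mathfrak{S}_N$ is by construction a sum of maximal second submodules all contained in $N$, the reverse inclusion is trivial, so $N=\mathfrak{S}_N$ is a sum of maximal second submodules. For $(b)\Rightarrow(a)$ I would use Remark \ref{r7.2}: each maximal second submodule is a $dsz^\circ$-submodule, but the family of $dsz^\circ$-submodules is closed under \emph{intersection}, not sum, so this direction needs care. Here is where the comultiplication hypothesis enters: in a comultiplication module a sum of second submodules $\sum S_i$ equals $(0:_M \bigcap Ann_R(S_i))$, and one can check directly from the definition of $\mathfrak{S}$ and Remark \ref{rrr7.1} that such a sum is a $dsz^\circ$-submodule — alternatively, translate to the ideal side via Theorem \ref{t7.3}, noting that $Ann_R(\sum S_i)=\bigcap Ann_R(S_i)$ is an intersection of minimal prime ideals of $R$ (each $Ann_R(S_i)$ being a minimal prime under the coreduced hypothesis, since maximal second submodules correspond to minimal primes here), and an intersection of minimal primes is a strong $z^\circ$-ideal.

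For the last two parts I would route through the comultiplication structure: since $M$ is comultiplication, $K=(0:_MAnn_R(K))$, so $(K:_RM)=(H:_RM)$ forces $Ann_R(K)$ and $Ann_R(H)$ to have the same radical-type closure, and combined with $Ann_R(N)$ being a strong $z^\circ$-ideal (part (c)) one gets $(a)\Leftrightarrow(g)$; for (h), $N\subseteq K$ implies $N\subseteq \mathfrak{S}_K$, and using Remark \ref{rrr7.1} together with the identity $(K:_RM)M$-type computations (in a comultiplication module, $\mathfrak{S}_K=(0:_M\mathfrak{P}_{Ann_R(K)})$ and one relates $\mathfrak{P}_{Ann_R(K)}$ to $Ann_R((K:_RM)M)$) yields $N\subseteq (K:_RM)M$; conversely $(h)\Rightarrow(a)$ by taking $K=N$. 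The main obstacle I anticipate is the direction $(b)\Rightarrow(a)$: closure of $dsz^\circ$-submodules under arbitrary sums is genuinely false without extra hypotheses, so the argument must really use that, in a Noetherian coreduced comultiplication module, the maximal second submodules are in bijective correspondence with the minimal prime ideals of $R$ and that sums of them annihilate to intersections of minimal primes — this correspondence (essentially the content of \cite[Theorem 2.3]{F407} and the coreduced condition) is what makes the sum land back in the class, and I would want to state it cleanly as the crux of the proof.
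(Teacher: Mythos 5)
Your handling of the core equivalences matches the paper exactly: $(a)\Leftrightarrow(d)\Leftrightarrow(e)\Leftrightarrow(f)$ from Theorem \ref{t7.6} and $(a)\Leftrightarrow(c)$ from Theorem \ref{t7.3} (with the passage to $R/Ann_R(M)$ via Proposition \ref{p7.1} to secure faithfulness, a point the paper glosses over). Where you diverge is that the paper disposes of (b), (g) and (h) in one stroke by citing \cite[Theorem 2.24]{F407}, whereas you try to argue them internally. Your argument for $(a)\Rightarrow(b)$ (take $K=N$ to get $N=\mathfrak{S}_N$) is correct, and your route for $(b)\Rightarrow(a)$ --- pass to annihilators, note $Ann_R(\sum S_i)=\bigcap Ann_R(S_i)$ is an intersection of minimal primes and hence a strong $z^\circ$-ideal, then invoke Theorem \ref{t7.3} --- is sound and correctly identifies why closure under sums, which fails in general by Remark \ref{r7.2}, is rescued here; you are right that the correspondence between maximal second submodules and minimal primes is the crux, and that correspondence is exactly what the coreduced comultiplication hypotheses and \cite{F407} supply.

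The one genuine soft spot is your treatment of (g) and (h), which stays at the level of ``radical-type closure'' and ``$(K:_RM)M$-type computations'' without pinning down the identity actually needed. The clean statement that closes both at once is $\mathfrak{S}_K=(K:_RM)M$ for every submodule $K$ of a Noetherian coreduced comultiplication module (this is in substance what \cite[Theorem 2.24]{F407} provides). Granting it, (h) becomes a literal restatement of the definition of a $dsz^\circ$-submodule, and (g) reduces to (d) since $(K:_RM)=(H:_RM)$ then gives $\mathfrak{S}_K=\mathfrak{S}_H$. As written, your sketch for these two items would not compile into a proof without isolating and justifying that identity, so you should either prove it or cite it explicitly rather than gesturing at it.
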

\begin{proof}
Follows from Theorem \ref{t7.3}, Theorem \ref{t7.6}, and \cite[Theorem 2.24]{F407}.
\end{proof}

\begin{ex}\label{t97.9}
Let $M$ be a Noetherian coreduced multiplication and comultiplication $R$-module. Then every non-zero submodule of $M$ is a $dsz^\circ$-submodule of $M$. In particular, if $n$ is square-free, every non-zero submodule of the $\Bbb Z$-module $\Bbb Z_n$ is a $dsz^\circ$-submodule.
\end{ex}

For a submodule $N$ of an $R$-module $M$, the \emph{second radical} (or \emph{second socle}) of $N$ is defined  as the sum of all second submodules of $M$, contained in $N$, and it is denoted by $sec(N)$ (or $soc(N)$). In case $N$ does not contain any second submodule, the second radical of $N$ is defined to be $(0)$  \cite{MR3085034, MR3073398}.
\begin{thm}\label{t97.9}
Let $M$ be a Noetherian coreduced comultiplication $R$-module and $N$ be a $dsz^\circ$-submodule of $M$. Then every
maximal second submodule of $N$ is a $dsz^\circ$-submodule of $M$. In particular, $sec(N)$ is a $dsz^\circ$-submodule of $M$.
\end{thm}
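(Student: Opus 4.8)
The plan is to prove slightly more than is stated, namely that in a Noetherian coreduced comultiplication module $M$ \emph{every} non-zero second submodule of $M$ is already a $dsz^\circ$-submodule of $M$; both assertions of the theorem then follow at once.

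First I would recall that a second submodule $S$ of $M$ has prime annihilator $Ann_R(S)$ (as recalled just before Proposition \ref{p0.10}), and that $Ann_R(S)$ is proper since $S\neq 0$. The one real point is the purely ring-theoretic observation that \emph{every proper prime ideal $\mathfrak{p}$ of $R$ is a strong $z^\circ$-ideal}: given any ideal $J\subseteq\mathfrak{p}$, the prime $\mathfrak{p}$ contains some minimal prime ideal $\mathfrak{q}$ of $R$ with $J\subseteq\mathfrak{q}$, so $\mathfrak{P}_J\subseteq\mathfrak{q}\subseteq\mathfrak{p}$, which is exactly the defining condition of a strong $z^\circ$-ideal. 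Hence $Ann_R(S)$ is a strong $z^\circ$-ideal of $R$, and since $M$ is Noetherian coreduced comultiplication, the implication (c)$\Rightarrow$(a) of Corollary \ref{c9.6}, applied to the non-zero submodule $S$, gives that $S$ is a $dsz^\circ$-submodule of $M$. Taking $S$ to be a maximal second submodule of $N$ yields the first assertion.

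For the ``in particular'' part, $sec(N)$ is by definition the sum of all second submodules of $M$ contained in $N$, so $\mathfrak{S}_N\subseteq sec(N)\subseteq N$; on the other hand, applying the definition of a $dsz^\circ$-submodule with $K=N$ gives $N\subseteq\mathfrak{S}_N$, whence $sec(N)=\mathfrak{S}_N=N$ is a $dsz^\circ$-submodule. (If one prefers to avoid this identification: each second submodule contained in $N$ is a $dsz^\circ$-submodule by the previous paragraph, and a sum $\sum_i S_i$ of $dsz^\circ$-submodules is again a $dsz^\circ$-submodule, since $Ann_R\big(\sum_i S_i\big)=\bigcap_i Ann_R(S_i)$ and the strong $z^\circ$ condition passes verbatim to intersections, so Corollary \ref{c9.6} applies once more; here $sec(N)\neq 0$ because $\mathfrak{S}_N\neq 0$ by Remark \ref{r7.2}.)

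I do not foresee a genuine obstacle: the substance has been pushed into Corollary \ref{c9.6} and into the elementary fact that a prime ideal automatically satisfies the strong $z^\circ$ condition. The only things needing a moment's care are that Corollary \ref{c9.6} is invoked only for \emph{non-zero} submodules --- true for second submodules of $M$ and for $sec(N)$, the latter because $N$ is itself a non-zero $dsz^\circ$-submodule --- and the trivial inclusions $\mathfrak{S}_N\subseteq sec(N)\subseteq N$ used above.
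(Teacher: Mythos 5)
Your argument has a fatal gap at its central lemma: it is \emph{not} true that every proper prime ideal of $R$ is a strong $z^\circ$-ideal. Here $\mathfrak{P}_J$ is the intersection of the minimal prime ideals \emph{of $R$} (i.e.\ minimal over $(0)$) that contain $J$. A prime $\mathfrak{p}\supseteq J$ does contain some minimal prime $\mathfrak{q}$ of $R$, but there is no reason for that $\mathfrak{q}$ to contain $J$; indeed there may be no minimal prime of $R$ containing $J$ at all, in which case $\mathfrak{P}_J=R\not\subseteq\mathfrak{p}$. The simplest counterexample is $R=\Bbb Z$ with $\mathfrak{p}=J=(2)$: the only minimal prime of $\Bbb Z$ is $(0)$, so $\mathfrak{P}_{(2)}=\Bbb Z$ and $(2)$ is not even a $z^\circ$-ideal. (A reduced non-domain example: $R=k[x,y]/(xy)$, $\mathfrak{p}=(\bar x,\bar y)$, $J=(\bar x+\bar y)$.) The correct statement is that a prime ideal is a (strong) $z^\circ$-ideal precisely when it is a \emph{minimal} prime of $R$, and you have no a priori control on whether $Ann_R(S)$ is minimal. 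Your strengthened claim is false for the same reason at the module level: if $S$ is a second submodule that is not a maximal second submodule of $M$, then no maximal second submodule of $M$ can sit inside $S$ (a maximal second submodule properly contained in a second submodule $\neq M$ would contradict maximality), so $\mathfrak{S}_S=0$ and $S\not\subseteq\mathfrak{S}_S$; hence not every non-zero second submodule is a $dsz^\circ$-submodule. This also explains why the hypothesis that $N$ is a $dsz^\circ$-submodule cannot be discarded: the real content of the theorem is that this hypothesis forces the maximal second submodules of $N$ to be maximal second submodules of $M$.

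The paper's proof uses that hypothesis essentially: given $(K:_RM)=(H:_RM)$ with $S\subseteq K$, it produces (via the structure results of \cite{F407}) an element $c\in(K:_RM)\setminus Ann_R(S)$, applies the $dsz^\circ$ property of $N$ to the pair $(K:_Mc)$, $(H:_Mc)$ to get $S\subseteq N\subseteq(H:_Mc)$, and then uses $cS=S$ (secondness of $S$ and $c\notin Ann_R(S)$) to conclude $S\subseteq H$. Your treatment of the ``in particular'' clause, by contrast, is fine: $\mathfrak{S}_N\subseteq sec(N)\subseteq N\subseteq\mathfrak{S}_N$ gives $sec(N)=N$ directly, without the first assertion. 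But the main assertion is not established by your argument.
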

\begin{proof}
Let $S$ be a maximal second submodule of $N$. Assume that $(K:_RM)=(H:_RM)$, where $K, H$ are submodules of $M$ with
$S \subseteq K$. By \cite[Prop. 2.14 and Cor. 2.15]{F407}, there exists $c \in (K:_RM) \setminus Ann_R(S)$. Thus $cN \subseteq K$ and $c \not \in Ann_R(S)$. Clearly, $((K:_Mc):_RM)=((H:_Mc):_RM)$. As $N$ is a $dsz^\circ$-submodule of $M$, we have
$S \subseteq N \subseteq (H:_Mc)$. As $c \not \in Ann_R(S)$ and $S$ is a second submodule, $S \subseteq H$ as needed. Now, the last assertion is clear.
\end{proof}

\begin{cor}\label{c97.10}
Let $M$ be a Noetherian coreduced comultiplication $R$-module and $f: N\hookrightarrow M$ be the natural inclusion, where $N$ is a $dsz^\circ$-submodule of $M$. If $K$ is a $dsz^\circ$-submodule of $N$, then $f(N)$ is $dsz^\circ$-submodule of $M$.
\end{cor}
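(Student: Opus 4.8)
The plan is to observe that since $f\colon N\hookrightarrow M$ is the natural inclusion map, its image $f(N)$ is simply $N$ regarded as a submodule of $M$. Consequently the conclusion ``$f(N)$ is a $dsz^\circ$-submodule of $M$'' is literally the assertion that $N$ is a $dsz^\circ$-submodule of $M$, and this is precisely one of the standing hypotheses. Hence nothing beyond this identification is required, and the statement follows at once.

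More explicitly, first I would record that for the natural inclusion $f$ one has $f(N)=N$ as submodules of $M$. Second, I would invoke the hypothesis that $N$ is a $dsz^\circ$-submodule of $M$. Combining these two facts immediately yields that $f(N)$ is a $dsz^\circ$-submodule of $M$, completing the argument. There is no genuine obstacle here: the whole content is the bookkeeping observation that passing through the inclusion does not change $N$.

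I would also flag that the hypothesis that $K$ is a $dsz^\circ$-submodule of $N$ plays no role in the conclusion as worded; it would become essential only if one instead wished to prove that $f(K)=K$ is a $dsz^\circ$-submodule of $M$. In that more substantive variant, the main difficulty would be to transfer the $dsz^\circ$-property from the module $N$ to the ambient module $M$: one would have to compare the maximal second submodules of $N$ with those of $M$ and relate the operator $\mathfrak{S}_{(\,\cdot\,)}$ computed inside $N$ with the one computed inside $M$, presumably via the characterizations of Theorem \ref{t7.6} together with Theorem \ref{t97.9}. For the statement exactly as stated, however, this difficulty does not arise, and the proof reduces to the single identity $f(N)=N$.
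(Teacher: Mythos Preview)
Your observation is entirely correct: since $f$ is the natural inclusion one has $f(N)=N$, and the hypothesis already asserts that $N$ is a $dsz^\circ$-submodule of $M$, so the conclusion as literally stated is immediate. The paper itself offers no proof of this corollary, so there is nothing substantive to compare your argument against; your identification $f(N)=N$ is all that is needed for the statement as written.

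Your diagnosis that the intended conclusion was almost certainly ``$f(K)$ is a $dsz^\circ$-submodule of $M$'' is also well taken; the placement of the corollary immediately after Theorem~\ref{t97.9}, together with the otherwise idle hypothesis on $K$, strongly suggests this. For that intended version, the natural route (and presumably the one the authors had in mind) would go through Corollary~\ref{c9.6}(b): since $K$ is a $dsz^\circ$-submodule of $N$, it is a sum of maximal second submodules of $N$; each of these is a second submodule of $M$ contained in $N$, hence is contained in a maximal second submodule of $N$, and by Theorem~\ref{t97.9} every maximal second submodule of $N$ is a $dsz^\circ$-submodule of $M$. One then concludes that $K$ is contained in a sum of $dsz^\circ$-submodules of $M$ and, via the characterization in Corollary~\ref{c9.6}(b) again, that $K=f(K)$ is itself a $dsz^\circ$-submodule of $M$. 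Your sketch of where the difficulty would lie in that variant is accurate.
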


\begin{thm}\label{p97.13}
Let $N_i$ for $1\leq i\leq n$ be a non-zero submodule of a Noetherian coreduced comultiplication $R$-module $M$ such that for each $i\not=j$, $Ann_R(N_i)$ and $Ann_R(N_j)$ are co-prime ideals of $R$. Then $\sum^n_{i=1}N_i$ is a $dsz^\circ$-submodule of $M$ if and only if each $N_j$ for $1\leq j\leq n$  is a $dsz^\circ$-submodule of $M$.
\end{thm}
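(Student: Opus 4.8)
The plan is to reduce everything to the ideal-theoretic characterization supplied by Theorem~\ref{t7.3}, namely that for a faithful finitely generated comultiplication module (which a Noetherian coreduced comultiplication module is, up to passing to $R/Ann_R(M)$ via Proposition~\ref{p7.1}) a submodule $N$ is a $dsz^\circ$-submodule if and only if $Ann_R(N)$ is a strong $z^\circ$-ideal of $R$. So the statement becomes: $\sum_{i=1}^n N_i$ is a $dsz^\circ$-submodule iff each $N_j$ is, which in turn becomes a statement purely about strong $z^\circ$-ideals, provided one can relate $Ann_R(\sum_i N_i)$ to the various $Ann_R(N_i)$. Since $M$ is a comultiplication module, $N_i=(0:_M Ann_R(N_i))$, so $\sum_i N_i = \sum_i (0:_M Ann_R(N_i))$; and by \cite[Proposition 2.1(b)]{F407} (used already in the proof of Theorem~\ref{t0.5}) this sum equals $(0:_M \bigcap_i Ann_R(N_i))$. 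Hence $Ann_R(\sum_i N_i) = Ann_R\big((0:_M \bigcap_i Ann_R(N_i))\big) = \bigcap_i Ann_R(N_i)$, the last equality by the DAC property. Set $I_j = Ann_R(N_j)$; the co-primeness hypothesis says $I_i + I_j = R$ for $i\neq j$, and we must show $\bigcap_{j=1}^n I_j$ is a strong $z^\circ$-ideal iff each $I_j$ is.

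For the forward direction I would argue one index at a time. Fix $j$ and suppose $J := \bigcap_i I_i$ is a strong $z^\circ$-ideal. Because the $I_i$ are pairwise co-prime, $J = \prod_i I_i$, and more usefully $I_j + \bigcap_{i\neq j} I_i = R$ (co-primeness of $I_j$ with each of the finitely many $I_i$, $i\neq j$, propagates to their intersection). Thus by CRT $R/J \cong R/I_j \times R/\bigcap_{i\neq j}I_i$, so $I_j/J$ is a direct summand of $R/J$, i.e. $I_j = J + e$ for a suitable idempotent-type decomposition. To show $I_j$ is a strong $z^\circ$-ideal, take $a\in I_j$; I must show $\mathfrak{P}_a\subseteq I_j$, where $\mathfrak{P}_a$ is the intersection of the minimal primes containing $a$. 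The point is that every minimal prime $\mathfrak{p}\supseteq(a)$ either contains $I_j$ or contains $\bigcap_{i\neq j}I_i$ (since $\mathfrak{p}\supseteq a$ forces $\mathfrak{p}\supseteq J=\prod I_i$, hence $\mathfrak{p}$ contains some $I_i$); writing $a = u + v$ with $u\in J$ and $v$ in the complementary summand, one checks $a\in I_j$ forces $v\in I_j$ too, and then $\mathfrak{P}_a$ is controlled by $\mathfrak{P}_u\subseteq J\subseteq I_j$ together with the part of $\mathfrak{P}_a$ coming from the $I_j$-primes, which lies in $I_j$ because $J$ being strong $z^\circ$ gives $\mathfrak{P}_u\subseteq J\subseteq I_j$. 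So $\mathfrak{P}_a\subseteq I_j$.

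For the converse, assume each $I_j$ is a strong $z^\circ$-ideal and show $J=\bigcap_j I_j$ is. Take $a\in J$; then $a\in I_j$ for every $j$, so $\mathfrak{P}_a\subseteq I_j$ for every $j$ by hypothesis, whence $\mathfrak{P}_a\subseteq\bigcap_j I_j = J$. This direction is essentially immediate: the family of strong $z^\circ$-ideals is closed under arbitrary intersections directly from the defining condition ``$\mathfrak{P}_a\subseteq I$ for all $a\in I$'', since if $a$ lies in the intersection it lies in each member. Indeed this mirrors Remark~\ref{r7.2}'s observation that $dsz^\circ$-submodules are closed under intersection, and one could alternatively run the converse entirely on the module side: $\sum_i N_i \subseteq K$ forces each $N_j\subseteq K$, hence each $N_j\subseteq \mathfrak{S}_K$, hence $\sum_j N_j\subseteq\mathfrak{S}_K$.

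The main obstacle is the forward direction, and specifically making precise the claim that co-primeness lets one ``split off'' the $I_j$-component of $\mathfrak{P}_a$ from the rest. The subtlety is that $\mathfrak{P}_a$ is defined via \emph{minimal} primes over $(a)$, and one must verify that passing from $J$ to the summand $I_j$ does not create new minimal primes over the image of $a$ that escape $I_j$; here the pairwise co-primeness (which gives $\mathrm{Spec}(R/J)$ as a disjoint union of the $\mathrm{Spec}(R/I_i)$) is exactly what guarantees the minimal primes over $(a)$ partition cleanly according to which $I_i$ they contain, so no cross-terms appear. Once that topological/decomposition fact is nailed down, the rest is bookkeeping. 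A cleaner route, if available, is to invoke a known result that for co-prime ideals $I_1,\dots,I_n$, $\bigcap I_i$ is a strong $z^\circ$-ideal iff each $I_i$ is --- the analogue of \cite[Proposition 1.3]{MR321915} used in Corollary~\ref{c0.7} but for $z^\circ$ rather than $z$ --- and then the whole proof collapses to the annihilator computation in the first paragraph plus a citation.
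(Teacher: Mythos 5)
Your reduction to an ideal statement is legitimate in outline (Theorem \ref{t7.3} plus Proposition \ref{p7.1}; note that $Ann_R(\sum_i N_i)=\bigcap_i Ann_R(N_i)$ holds for trivial reasons for any modules, so you do not need --- and in fact do not have --- the DAC property, since $M$ is not assumed to be a strong comultiplication module), and your converse direction is fine. But the forward direction, which is the only nontrivial part, rests on a false claim. You assert that for $a\in I_j$ every minimal prime $\mathfrak{p}$ over $(a)$ contains $J=\bigcap_i I_i$, ``since $\mathfrak{p}\supseteq a$ forces $\mathfrak{p}\supseteq J$.'' It does not: $a\in I_j$ gives no containment between $J$ and $(a)$. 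Concretely, in $R=k[x,y]$ with $I_1=(x)$ and $I_2=(x-1)$ (coprime), so $J=(x^2-x)$, the element $a=xy\in I_1$ has $(y)$ as a minimal prime over $(a)$, and $(y)$ does not contain $J$. Hence the minimal primes over $(a)$ do not partition according to which $I_i$ they contain, and the rest of that paragraph (the decomposition $a=u+v$, the ``part of $\mathfrak{P}_a$ coming from the $I_j$-primes'') has nothing to stand on --- as you yourself half-concede by calling this ``the main obstacle.'' A secondary imprecision: Theorem \ref{t7.3} characterizes $dsz^\circ$-submodules via \emph{strong} $z^\circ$-ideals, so you must control $\mathfrak{P}_L$ for every ideal $L\subseteq I_j$, not just $\mathfrak{P}_a$ for elements $a\in I_j$.

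The gap is fillable, and the missing idea is precisely the coprimality decomposition $1=a+b$ with $a\in\bigcap_{i\neq j}I_i$ and $b\in I_j$, which is the engine of the paper's proof. On your ideal side it would run: for an ideal $L\subseteq I_j$ one has $L\cap\bigcap_{i\neq j}I_i\subseteq J$, hence $\mathfrak{P}_{L\cap\bigcap_{i\neq j}I_i}\subseteq J\subseteq I_j$; for $x\in\mathfrak{P}_L$ the element $ax$ lies in every minimal prime over $L\cap\bigcap_{i\neq j}I_i$ (such a prime contains $L$, hence $x$, or contains $\bigcap_{i\neq j}I_i\ni a$), so $ax\in I_j$, while $bx\in I_j$ trivially, giving $x=ax+bx\in I_j$. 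The paper itself stays entirely on the module side: it uses characterization (g) of Corollary \ref{c9.6}, and given $(K:_RM)=(H:_RM)$ with $N_j\subseteq K$ it observes $\sum_iN_i\subseteq(K:_Ma)$ and $((K:_Ma):_RM)=((H:_Ma):_RM)$, applies the hypothesis on $\sum_iN_i$ to get $N_j\subseteq(H:_Ma)$, and concludes $N_j\subseteq(H:_Ma)\cap(H:_Mb)=H$. Either route works, but without the $1=a+b$ step your argument does not close.
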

\begin{proof}
Assume that $\sum^n_{i=1}N_i$ is a $dsz^\circ$-submodule of $M$ and $1\leq j\leq n$. We show that $N_j$ is a $dsz^\circ$-submodule of $M$. So assume that $(K:_RM)=(H:_RM)$ for some submodules $K, H$ of $M$ with $N_j \subseteq K$. Since for each $i\not=j$,  $Ann_R(N_i)$ and $Ann_R(N_j)$ are co-prime ideals of $R$, $\bigcap^n_{i=1, i \not=j}Ann_R(N_i)$ and $Ann_R(N_j)$ are co-prime ideals of $R$. Thus $1=a+b$ for some $a \in \bigcap^n_{i=1, i \not=j}Ann_R(N_i)$ and $b \in Ann_R(N_j)$. So, $H=(H:_Ma)\cap (H:_Mb)$
and $(K:_Ma):_RM)=(H:_Ma):_RM)$. Now, we have $\sum^n_{i=1}N_i\subseteq (K:_Ma)$. Thus $\sum^n_{i=1}N_i$ is a $dsz^\circ$-submodule of $M$ which implies that $N_j\subseteq \sum^n_{i=1}N_i \subseteq (H:_Ma)$. Now since $N_j \subseteq (H:_Mb)$, we have  $N_j \subseteq H$ and we are done. The converse is clear.
\end{proof}

Let $R_i$ be a commutative ring with identity, $M_i$ be an $R_i$-module for $i = 1, 2$. Assume that
$M = M_1\times M_2$ and $R = R_1\times R_2$. Then $M$ is clearly
an $R$-module with componentwise addition and scalar multiplication. Also,
each submodule $N$ of $M$ is of the form $N = N_1\times N_2$, where $N_i$ is a
submodule of $M_i$ for $i = 1, 2$.

\begin{lem}\label{l444.6}
Let  $M = M_1\times M_2$ be an $R = R_1\times R_2$-module, where $M_i$ is an $R_i$-module for $i = 1, 2$. If $N = N_1\times N_2$ is a submodule of $M$, then $V^s(N_1\times N_2)=V^s(N_1\times 0) \cup V^s(0\times N_2)$.
\end{lem}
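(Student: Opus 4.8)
The plan is to first classify the second submodules — and then the maximal second submodules — of the product module $M = M_1 \times M_2$, and then simply read off the claimed equality of the sets $V^s$ from that classification.

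First I would prove the structural fact that a submodule $S = S_1 \times S_2$ of $M$ is second if and only if exactly one of $S_1, S_2$ is zero and the other is a second submodule of the corresponding factor. For the forward direction, suppose $S$ is second and both $S_1$ and $S_2$ are non-zero. Applying the idempotent $e = (1,0) \in R$ gives $eS = S_1 \times 0$, which is neither $0$ (since $S_1 \neq 0$) nor all of $S$ (since $S_2 \neq 0$), contradicting the definition of a second submodule. Hence one coordinate vanishes, say $S = S_1 \times 0$ with $S_1 \neq 0$; then for $a_1 \in R_1$ the homomorphism $S \stackrel{(a_1,0)}{\rightarrow} S$ is multiplication by $a_1$ in the first coordinate, hence surjective or zero, so $S_1$ is second in $M_1$. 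The converse is immediate: if $S_1$ is second in $M_1$, then for $(a_1,a_2) \in R$ we have $(a_1,a_2)(S_1 \times 0) = a_1 S_1 \times 0 \in \{ S_1 \times 0, 0 \}$, and symmetrically for $0 \times S_2$.

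Next I would upgrade this to maximal second submodules. If $S_1 \times 0 \subseteq T$ for some second submodule $T$ of $M$, then by the previous step $T = T_1 \times 0$ with $S_1 \subseteq T_1$ and $T_1$ second in $M_1$ (the alternative $T = 0 \times T_2$ is impossible because $S_1 \neq 0$). Thus $S_1 \times 0 \in Max^s(M)$ if and only if $S_1 \in Max^s(M_1)$, and symmetrically $0 \times S_2 \in Max^s(M)$ if and only if $S_2 \in Max^s(M_2)$. Consequently
\[
Max^s(M) = \{\, S_1 \times 0 : S_1 \in Max^s(M_1) \,\} \cup \{\, 0 \times S_2 : S_2 \in Max^s(M_2) \,\}.
\]

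Finally I would intersect this description with the condition of being contained in $N_1 \times N_2$. An element $S_1 \times 0$ of $Max^s(M)$ is contained in $N_1 \times N_2$ iff $S_1 \subseteq N_1$ iff $S_1 \times 0 \subseteq N_1 \times 0$, that is, iff it belongs to $V^s(N_1 \times 0)$; similarly $0 \times S_2 \in V^s(N_1 \times N_2)$ iff $0 \times S_2 \in V^s(0 \times N_2)$. This yields the inclusion $V^s(N_1 \times N_2) \subseteq V^s(N_1 \times 0) \cup V^s(0 \times N_2)$, while the reverse inclusion is just the monotonicity of $V^s$ applied to $N_1 \times 0 \subseteq N_1 \times N_2$ and $0 \times N_2 \subseteq N_1 \times N_2$. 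The only step that requires genuine care — the main obstacle — is the first structural claim about second submodules of a direct product; once the idempotents $(1,0)$ and $(0,1)$ are used to split off the coordinates, everything else is routine bookkeeping.
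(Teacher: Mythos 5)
Your proof is correct and follows essentially the same route as the paper: where you prove the classification of second submodules of $M_1\times M_2$ directly with the idempotents $(1,0)$ and $(0,1)$, the paper simply cites \cite[Lemma 2.23]{HF17} for that fact and then reads off the equality of the $V^s$-sets exactly as you do. One small remark: your middle paragraph identifying $Max^s(M)$ with the images of $Max^s(M_1)$ and $Max^s(M_2)$ is not actually needed (and is the only place where the paper's slightly unusual definition of maximal second submodule could cause an edge case when some $M_i$ is itself second), since your final step only uses that every element of $Max^s(M)$ has the form $S_1\times 0$ or $0\times S_2$, which already follows from the classification of second submodules.
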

\begin{proof}
By \cite[Lemma 2.23]{HF17}, $S$ is a second submodule of $M$ if and only if $S=S_1\times 0$ or $S=S_2 \times 0$, where $S_1$ is a second submodule of $ M_1$ and $S_2$ is a second submodule of $M_2$. This implies that $S \in V^s(N_1\times N_2)$ if and only if
$S \in V^s(N_1\times 0)$ or $S \in V^s(0\times N_2)$, as needed.
\end{proof}

\begin{thm}\label{t2.6}
Let  $M = M_1\times M_2$ be an $R = R_1\times R_2$-module, where $M_i$ is an $R_i$-module for $i = 1, 2$. Let $N = N_1\times N_2$ be a submodule of $M$. Then the following are equivalent:
\begin{itemize}
\item [(a)] $N$ is a $dsz^\circ$-submodule of $M$;
\item [(b)] $N_i$ is a $dsz^\circ$-submodule of $M_i$ for each $i = 1, 2$.
\end{itemize}
\end{thm}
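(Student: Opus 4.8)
The plan is to run everything through the $V^s$-characterisation of $dsz^\circ$-submodules in Theorem \ref{t7.6}(d), using the description of second submodules of a product module isolated in the proof of Lemma \ref{l444.6}: a submodule $S$ of $M = M_1 \times M_2$ is second exactly when $S = S_1 \times 0$ with $S_1$ second in $M_1$, or $S = 0 \times S_2$ with $S_2$ second in $M_2$. The first thing I would record is the resulting bookkeeping dictionary: for submodules $K_1 \subseteq M_1$ and $K_2 \subseteq M_2$, Lemma \ref{l444.6} gives $V^s(K_1 \times K_2) = V^s(K_1 \times 0) \cup V^s(0 \times K_2)$, and the map $S_1 \mapsto S_1 \times 0$ is a containment-preserving bijection from the set of second submodules of $M_1$ contained in $K_1$ onto $V^s(K_1 \times 0)$, and symmetrically for the second factor. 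In particular, for a submodule $H = H_1 \times H_2$ of $M$ one has $S_1 \times 0 \in V^s(H)$ if and only if $S_1 \subseteq H_1$, so membership of a first-coordinate second submodule in $V^s$ of a product depends only on the first coordinate (and a nonzero second submodule of the form $S_1 \times 0$ is never contained in $0 \times H_2$).

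For $(b) \Rightarrow (a)$: assume each $N_i$ is a $dsz^\circ$-submodule of $M_i$, and let $K = K_1 \times K_2$ and $H = H_1 \times H_2$ be submodules of $M$ with $N \subseteq K$ and $V^s(K) \subseteq V^s(H)$; then $N_i \subseteq K_i$. Feeding the second submodules $S_1 \times 0$ with $S_1 \subseteq K_1$ into $V^s(K) \subseteq V^s(H)$ and using the dictionary above yields $V^s_{M_1}(K_1) \subseteq V^s_{M_1}(H_1)$, and likewise $V^s_{M_2}(K_2) \subseteq V^s_{M_2}(H_2)$. Applying Theorem \ref{t7.6}(d) inside $M_1$ and inside $M_2$ gives $N_1 \subseteq H_1$ and $N_2 \subseteq H_2$, hence $N \subseteq H$; so $N$ is a $dsz^\circ$-submodule of $M$ by Theorem \ref{t7.6}.

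For $(a) \Rightarrow (b)$: assume $N$ is a $dsz^\circ$-submodule of $M$, and show $N_1$ is one in $M_1$ (the case of $N_2$ being symmetric). Given $K_1, H_1 \subseteq M_1$ with $N_1 \subseteq K_1$ and $V^s_{M_1}(K_1) \subseteq V^s_{M_1}(H_1)$, set $K = K_1 \times N_2$ and $H = H_1 \times N_2$, so that $N \subseteq K$. Using Lemma \ref{l444.6} together with the fact that a nonzero second submodule of $M$ contained in $M_1 \times 0$ cannot be contained in $0 \times N_2$, one checks $V^s(K) \subseteq V^s(H)$; the hypothesis on $N$ then gives $N \subseteq H = H_1 \times N_2$, whence $N_1 \subseteq H_1$. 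By Theorem \ref{t7.6}(d), $N_1$ is a $dsz^\circ$-submodule of $M_1$.

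The argument is almost entirely bookkeeping, so there is no single hard step; the point requiring genuine care is the correspondence between second submodules of $M_i$ and second submodules of $M$ lying in the $i$-th factor, which is precisely the input supplied by Lemma \ref{l444.6}, and, at the level of the statement, the tacit assumption that $N_1$ and $N_2$ are both nonzero, which is what makes the phrase ``$N_i$ is a $dsz^\circ$-submodule of $M_i$'' meaningful; if one factor of $N$ vanishes, the claim should be read with the obvious convention on that factor.
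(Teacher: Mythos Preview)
Your proof is correct and follows essentially the same route as the paper: both directions are proved via the $V^s$-characterisation of $dsz^\circ$-submodules in Theorem~\ref{t7.6}, transported through the product via Lemma~\ref{l444.6}, with the same choice $K = K_1 \times N_2$, $H = H_1 \times N_2$ in the $(a)\Rightarrow(b)$ step. The only cosmetic difference is that you use the inclusion criterion \ref{t7.6}(d) where the paper uses the equality criterion \ref{t7.6}(c); your remark about the tacit nonvanishing of the $N_i$ is a fair observation not made explicit in the paper.
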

\begin{proof}
$(a)\Rightarrow (b)$
Let for submodules $K_1$, $H_1$ of $M_1$, $V^s(K_1)=V^s(H_1)$ and $N_1\subseteq K_1$.
Set $K=K_1\times  N_2$ and $H=H_1\times N_2$. Then by using  Lemma \ref{l444.6}, $V^s(K)=V^s(H)$ and $N \subseteq K$. Thus by part (a) and Theorem \ref{t7.6}, $N \subseteq H$. This implies that $N_i \subseteq H_1$ and so $N_1$ is a $dsz^\circ$-submodule of $M_1$ by Theorem \ref{t7.6}. Similarly, $N_2$ is a $dsz^\circ$-submodule of $M_2$

$(b)\Rightarrow (a)$
Let for submodules $K=K_1\times K_2$, $H=H_1\times H_2$ of $M$, $V^s(K)=V^s(H)$ and $N\subseteq K$.
Then $V^s(K_i)=V^s(H_i)$ and $N_i\subseteq K_i$ by Lemma \ref{l444.6} for $i = 1, 2$.  Now by part (b),  $N_i \subseteq H$  for $i = 1, 2$. It follows that  $N \subseteq H$, as needed.
\end{proof}
\section{The dual of quasi $z^\circ$-submodules}

\begin{defn}\label{d1.1}
 We say that a non-zero submodule $N$ of an $R$-module $M$ is a \textit{dual quasi $z^\circ$-submodule}  or  \textit{$dqz^\circ$-submodule} of $M$ if $N \subseteq \mathfrak{S}_{(0:_Ma)}$  for all  $a \in Ann_R(N)$.
\end{defn}

\begin{rem}\label{r1.1}
Let $M$ be an $R$-module.
If $N$ is a $dqz^\circ$-submodule of $M$, then for
each $a \in Ann_R(N)$ we have  $\mathfrak{S}_{(0:_Ma)}\not=0$, i.e. $(0:_Ma)$ contains at least a maximal second submodule of $M$.   Clearly, every maximal second submodule of $M$ is a $dqz^\circ$-submodule of $M$. Also, the family of
$dqz^\circ$-submodules of $M$ is closed under summation. Therefore, if  $\mathfrak{S}_{M}\not=0$, then $\mathfrak{S}_{M}$ is a $dqz^\circ$-submodule of $M$ and it is contains every $dqz^\circ$-submodule of $M$.
\end{rem}

\begin{rem}\label{r97.2}
Clearly, if $N$ is a $dsz^\circ$-submodule of $M$, then $N$ is a $dqz^\circ$-submodule of $M$. The converse holds when every submodule  of $M$ is of the form $(0:_Mr)$ for some $r \in R$.
\end{rem}

\begin{prop}\label{pr551.1}
Let $N$ be a submodule of a cocyclic $R$-module $M$. If $N$ is a $dz^\circ$-submodule of $M$, then $N$ is a $dqz^\circ$-submodule of $M$. The converse holds when $M$ is a comultiplication $R$-module.
\end{prop}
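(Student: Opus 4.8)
The plan is to prove the two implications separately; both rest on the fact recalled above \cite{FHo06} that a submodule $L$ of an $R$-module is completely irreducible if and only if the corresponding quotient is cocyclic, combined with \cite[Lemma 2.1]{MR3588217}, which says that $(L:_Mr)$ is completely irreducible whenever $L$ is.

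For the forward implication (where only cocyclicity of $M$ is used): since $M$ is cocyclic, applying the quoted characterization to the submodule $(0)$ shows that $(0)$ is completely irreducible in $M$. Let $a\in Ann_R(N)$; then $aN=0$, so $N\subseteq(0:_Ma)$. If $aM\neq0$ then $(0:_Ma)$ is proper, hence completely irreducible by \cite[Lemma 2.1]{MR3588217}, and the $dz^\circ$ hypothesis applied to the pair $N\subseteq(0:_Ma)$ gives $N\subseteq\mathfrak{S}_{(0:_Ma)}$. If $aM=0$ then $(0:_Ma)=M$, so $\mathfrak{S}_{(0:_Ma)}=\mathfrak{S}_M$; choosing any completely irreducible submodule $L$ of $M$ with $N\subseteq L$ (possible since $N$ is proper) and using the $dz^\circ$ hypothesis once more yields $N\subseteq\mathfrak{S}_L\subseteq\mathfrak{S}_M$. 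In all cases $N\subseteq\mathfrak{S}_{(0:_Ma)}$, so $N$ is a $dqz^\circ$-submodule of $M$.

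For the converse, assume $M$ is a comultiplication module and $N$ is a $dqz^\circ$-submodule; I must show $N\subseteq\mathfrak{S}_L$ for every completely irreducible submodule $L$ of $M$ with $N\subseteq L$. Fix such an $L$ and put $I=Ann_R(L)$. Since $M$ is a comultiplication module, $L=(0:_MI)=\bigcap_{a\in I}(0:_Ma)$, and each $(0:_Ma)$ with $a\in I$ contains $L$. Since $L$ is completely irreducible, $M/L$ is cocyclic, so $(0)$ is completely irreducible in $M/L$. The family $\{(0:_Ma)/L\}_{a\in I}$ consists of submodules of $M/L$ and has intersection $L/L=(0)$, hence one of its members is already $(0)$: there is $a_0\in I$ with $(0:_M a_0)=L$. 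As $N\subseteq L$ we have $a_0\in I=Ann_R(L)\subseteq Ann_R(N)$, so the $dqz^\circ$ hypothesis gives $N\subseteq\mathfrak{S}_{(0:_M a_0)}=\mathfrak{S}_L$. Since $L$ was arbitrary, $N$ is a $dz^\circ$-submodule of $M$.

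I expect the decisive step to be the reduction in the converse of the possibly infinite intersection $L=(0:_MI)=\bigcap_{a\in I}(0:_Ma)$ to a single colon submodule $(0:_M a_0)=L$; this is exactly where complete irreducibility of $(0)$ in the cocyclic quotient $M/L$ is used, and the comultiplication hypothesis is needed precisely to produce the presentation $L=(0:_MI)$ that makes this reduction available. The only other delicate point is the degenerate case $aM=0$ in the forward direction, handled above by replacing $(0:_Ma)=M$ with a genuine completely irreducible submodule containing $N$.
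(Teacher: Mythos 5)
Your proof is correct and follows essentially the same route as the paper's: the forward direction uses that $(0)$ is completely irreducible in the cocyclic module $M$, so $(0:_Ma)$ is completely irreducible by \cite[Lemma 2.1]{MR3588217}, and the converse writes $L=\bigcap_{a\in Ann_R(L)}(0:_Ma)$ and uses complete irreducibility of $L$ to collapse this to a single $(0:_Ma_0)=L$ (you phrase this via cocyclicity of $M/L$, the paper invokes the definition of completely irreducible directly, but it is the same reduction). Your explicit treatment of the degenerate case $(0:_Ma)=M$, which the paper's definition of completely irreducible (a \emph{proper} submodule) technically excludes, is a small point of extra care not present in the paper's proof.
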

\begin{proof}
Let $a \in Ann_R(N)$. Then $N \subseteq (0:_Ma)$. Since $M$ is a cocyclic module, the $(0)$ is a completely irreducible submodule of $M$. Thus by ,  $(0:_Ma)$ is a completely irreducible submodule of $M$.  Thus by assumption, $N \subseteq \mathfrak{S}_{(0:_Ma)}$, as needed. For converse, let $M$ be a comultiplication $R$-module and $L$ be a completely irreducible submodule of $M$ such that $N \subseteq L$. Then $L=\cap _{a \in Ann_R(L)}(0:_Ma)$ and so $L=(0:_Ma)$ for some $a \in Ann_R(L)$. It follows that  $a \in Ann_R(N)$.
Now the result is clear.
\end{proof}

\begin{lem}\label{l1.1}
Let $M$ be an $R$-module.
A submodule $N$ of $M$ is a $dqz^\circ$-submodule if and only if $N=\cap_{a\in Ann_R(N)} \mathfrak{S}_{(0:_Ma)}$.
\end{lem}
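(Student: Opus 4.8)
The plan is to follow the template of Lemma \ref{l7.1}, reducing both implications to Definition \ref{d1.1} together with the trivial inclusion $\mathfrak{S}_{(0:_Ma)}\subseteq (0:_Ma)$. The ``if'' direction is immediate: if $N=\bigcap_{a\in Ann_R(N)}\mathfrak{S}_{(0:_Ma)}$, then $N$ is contained in each member of that intersection, i.e. $N\subseteq\mathfrak{S}_{(0:_Ma)}$ for every $a\in Ann_R(N)$, which is exactly the defining condition for $N$ to be a $dqz^\circ$-submodule of $M$.

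For the ``only if'' direction, suppose $N$ is a $dqz^\circ$-submodule. The inclusion $N\subseteq\bigcap_{a\in Ann_R(N)}\mathfrak{S}_{(0:_Ma)}$ is again just Definition \ref{d1.1}. The content of the lemma is the reverse inclusion $\bigcap_{a\in Ann_R(N)}\mathfrak{S}_{(0:_Ma)}\subseteq N$, and here I would argue through Remark \ref{r2.1}: it is enough to check that for every completely irreducible submodule $L$ of $M$ with $N\subseteq L$ one has $\bigcap_{a\in Ann_R(N)}\mathfrak{S}_{(0:_Ma)}\subseteq L$. The idea is to exhibit a single $a_0\in Ann_R(N)$ with $\mathfrak{S}_{(0:_M a_0)}\subseteq L$; then the whole intersection is contained in $\mathfrak{S}_{(0:_M a_0)}\subseteq L$, and Remark \ref{r2.1} yields $\bigcap_{a}\mathfrak{S}_{(0:_Ma)}\subseteq N$. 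To produce $a_0$, I would realize $L$ itself as a colon: if $L=(0:_M a_0)$, then $N\subseteq L$ forces $a_0N\subseteq a_0L=0$, so $a_0\in Ann_R(N)$ automatically, and $\mathfrak{S}_{(0:_M a_0)}=\mathfrak{S}_L\subseteq L$, as wanted.

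The step I expect to be the real obstacle is precisely this last one: writing a completely irreducible submodule $L\supseteq N$ in the form $(0:_M a_0)$. This goes through exactly as in the proof of Proposition \ref{pr551.1} when $M$ is a comultiplication module (there $L=\bigcap_{a\in Ann_R(L)}(0:_Ma)$, and complete irreducibility forces $L$ to equal one of these colons), and more generally whenever every completely irreducible submodule of $M$ has that shape, which is the hypothesis appearing in Remark \ref{r97.2}. Without such a structural assumption, all one extracts from $\mathfrak{S}_{(0:_Ma)}\subseteq (0:_Ma)$ is the weaker bound $\bigcap_{a\in Ann_R(N)}\mathfrak{S}_{(0:_Ma)}\subseteq \bigcap_{a\in Ann_R(N)}(0:_Ma)=(0:_M Ann_R(N))$, which can be strictly larger than $N$; so this is where the argument must do its work, and I would carry out the colon-realization step under (and flag the need for) a comultiplication-type hypothesis on $M$.
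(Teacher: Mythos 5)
Your analysis is sharper than the paper's own treatment: the paper proves this lemma with the single line ``This is clear,'' evidently by analogy with Lemma \ref{l7.1}, but the analogy fails exactly where you say it does. In Lemma \ref{l7.1} the submodule $N$ itself occurs in the indexing family, so the intersection is trapped inside $\mathfrak{S}_N\subseteq N$; here the family is $\{(0:_Ma):a\in Ann_R(N)\}$, whose members only intersect down to $(0:_MAnn_R(N))$, which can strictly contain $N$. The ``only if'' direction is in fact false without extra hypotheses: take $R=\Bbb Z$, $M=\Bbb Q$, $N=\Bbb Z$. Then $Ann_R(N)=0$, the only non-zero second submodule of $\Bbb Q$ is $\Bbb Q$ itself, so $\mathfrak{S}_{(0:_M0)}=\mathfrak{S}_M=M\supseteq N$ makes $N$ a $dqz^\circ$-submodule, while $\bigcap_{a\in Ann_R(N)}\mathfrak{S}_{(0:_Ma)}=M\neq N$. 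So you have located a genuine error in the lemma, not merely a nontrivial step the paper skipped.

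Two comments on your repair. Under the comultiplication hypothesis you flag, the bound you set aside as ``weaker'' already finishes the proof: in a comultiplication module $N=(0:_MAnn_R(N))=\bigcap_{a\in Ann_R(N)}(0:_Ma)$, hence
$$
\bigcap_{a\in Ann_R(N)}\mathfrak{S}_{(0:_Ma)}\subseteq\bigcap_{a\in Ann_R(N)}(0:_Ma)=N,
$$
so the detour through Remark \ref{r2.1} and the colon-realization of completely irreducible submodules is unnecessary. That said, your longer route is also valid under the same hypothesis: the realization $L=(0:_Ma_0)$ with $a_0\in Ann_R(L)\subseteq Ann_R(N)$ goes through exactly as in Proposition \ref{pr551.1}. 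Either way, the lemma should carry an explicit hypothesis such as ``$M$ is a comultiplication module'' or ``$N=(0:_MAnn_R(N))$''; as stated for an arbitrary $R$-module it is incorrect, and the paper's proof does not address the only nontrivial inclusion.
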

\begin{proof}
This is clear.
\end{proof}

\begin{prop}\label{p1000.1}
Let $N$ be a non-zero submodule of an $R$-module $M$. Then $N$ as an $R$-submodule is a $dqz^\circ$-submodule if and only if as an $R/Ann_R(M)$-submodule is a $dqz^\circ$-submodule.
\end{prop}
\begin{proof}
This is clear.
\end{proof}

\begin{lem}\label{t19.3}
Let $M$ be a faithful finitely generated comultiplication $R$-module. Then $N$ is a $dqz^\circ$-submodule of $M$ if and only if $Ann_R(N)$ is a $z^\circ$-ideal of $R$.
\end{lem}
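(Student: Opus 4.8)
The plan is to read off both implications directly from Remark \ref{rrr7.1}, applied to the principal ideal $I = Ra$ for $a \in R$ (so that $\mathfrak{P}_{Ra}$ coincides with the ideal $\mathfrak{P}_a$ of Definitions \ref{n1.1}(2)), together with the elementary equivalence
$N \subseteq (0:_M J) \Leftrightarrow JN = 0 \Leftrightarrow J \subseteq Ann_R(N)$, valid for any ideal $J$ of $R$ and any submodule $N$ of $M$. I would also note at the outset that $N \neq 0$ forces $Ann_R(N)$ to be a proper ideal, so that calling it a $z^\circ$-ideal makes sense.

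For the forward direction, suppose $N$ is a $dqz^\circ$-submodule of $M$ and fix $a \in Ann_R(N)$. By definition $N \subseteq \mathfrak{S}_{(0:_Ma)}$, and Remark \ref{rrr7.1} with $I = Ra$ gives $\mathfrak{S}_{(0:_Ma)} = (0:_M\mathfrak{P}_a)$. Hence $\mathfrak{P}_a N = 0$, i.e. $\mathfrak{P}_a \subseteq Ann_R(N)$. Since $a \in Ann_R(N)$ was arbitrary, this is exactly the statement that $Ann_R(N)$ is a $z^\circ$-ideal of $R$.

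For the converse, suppose $Ann_R(N)$ is a $z^\circ$-ideal and fix $a \in Ann_R(N)$. Then $\mathfrak{P}_a \subseteq Ann_R(N)$, which by the elementary equivalence above means $N \subseteq (0:_M\mathfrak{P}_a)$; applying Remark \ref{rrr7.1} once more, $N \subseteq \mathfrak{S}_{(0:_Ma)}$. As this holds for every $a \in Ann_R(N)$, $N$ is a $dqz^\circ$-submodule of $M$.

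There is no real obstacle here: the substantive content is entirely carried by Remark \ref{rrr7.1}, which is also where the faithful, finitely generated, comultiplication hypotheses enter; the rest is formal. The only minor point to verify is that $\mathfrak{P}_{Ra}$, the intersection of the minimal prime ideals of $R$ over $Ra$, agrees with $\mathfrak{P}_a$ as defined earlier, so that Remark \ref{rrr7.1} specializes to the identity $\mathfrak{S}_{(0:_Ma)} = (0:_M\mathfrak{P}_a)$ used in both directions.
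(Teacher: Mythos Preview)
Your proof is correct and takes essentially the same approach as the paper: the paper's proof consists of the single line ``This follows from \cite[Theorem 2.3]{F407}'', which is precisely the content recorded in Remark \ref{rrr7.1}, and you have simply spelled out the (routine) details of how that identity yields both implications.
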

\begin{proof}
This follows from Theorem \cite[Theorem 2.3]{F407}.
\end{proof}

\begin{thm}\label{c19.3}
Let $N$ be a $dqz^\circ$-submodule of a coreduced faithful finitely generated comultiplication $R$-module $M$. Then $(K:_RN)$ is a  $z^\circ$-ideal of $R$ for each submodule $K$ of $M$. In particular, if $\mathfrak{S}_M=M$, then $(K:_RM)$ is a $z^\circ$-ideal of $R$ for each submodule $K$ of $M$.
 \end{thm}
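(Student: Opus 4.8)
\textbf{Proof proposal for Theorem \ref{c19.3}.}

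The plan is to reduce both assertions to the ideal-theoretic statement that a quotient of a $z^\circ$-ideal by an ideal is again a $z^\circ$-ideal, the dual-free analogue of what was used in Corollary \ref{c0.7}(b). First I would invoke Lemma \ref{t19.3}: since $M$ is a faithful finitely generated comultiplication $R$-module and $N$ is a $dqz^\circ$-submodule of $M$, the annihilator $Ann_R(N)$ is a $z^\circ$-ideal of $R$. Next, fix an arbitrary submodule $K$ of $M$. Because $M$ is a comultiplication module, $K = (0:_M Ann_R(K))$, and the coreduced hypothesis together with \cite[Theorem 2.3]{F407} (Remark \ref{rrr7.1}) lets me translate containments of submodules of the form $(0:_M I)$ back and forth with the corresponding $\mathfrak{P}_I$'s on the ring side. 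The key identity I expect to need is
$$
(K :_R N) = (Ann_R(K) :_R Ann_R(N)),
$$
valid because $K = (0:_M Ann_R(K))$ and $N = (0:_M Ann_R(N))$ in a (co)multiplication setting; this is exactly parallel to the identity $((0:_M Ann_R(K)):_R N) = (Ann_R(K):_R Ann_R(N))$ used in Corollary \ref{c0.7}(b).

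Having established that identity, the conclusion follows from the $z^\circ$-analogue of \cite[Proposition 1.3]{MR321915}: if $J$ is a $z^\circ$-ideal of $R$ and $I$ is any ideal, then $(I :_R J)$ is a $z^\circ$-ideal of $R$. Applying this with $J = Ann_R(N)$ (which is a $z^\circ$-ideal by the first step) and $I = Ann_R(K)$ gives that $(Ann_R(K):_R Ann_R(N)) = (K :_R N)$ is a $z^\circ$-ideal, as claimed. For the ``in particular'' clause, if $\mathfrak{S}_M = M$ then by Remark \ref{r1.1} the whole module $M$ is a $dqz^\circ$-submodule, and since $M$ is faithful and finitely generated comultiplication, $Ann_R(M) = 0$ is trivially a $z^\circ$-ideal; more directly, under $\mathfrak{S}_M = M$ one has $N = M$ available as a $dqz^\circ$-submodule, so $(K :_R M) = (K :_R N)$ with $N = M$, and the general case already handled yields that $(K:_R M)$ is a $z^\circ$-ideal for every submodule $K$.

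I would organize the write-up as: (1) quote Lemma \ref{t19.3} to get $Ann_R(N)$ a $z^\circ$-ideal; (2) record the ideal-quotient identity $(K:_R N) = (Ann_R(K):_R Ann_R(N))$, justified by the comultiplication property $K = (0:_M Ann_R(K))$; (3) cite the $z^\circ$-version of \cite[Proposition 1.3]{MR321915} (or reprove it in one line: for $a \in (I:_R J)$ and $x \in \mathfrak{P}_a$, note $a\mathfrak{P}_{?}$-type containment forces $\mathfrak{P}_x \subseteq (I:_R J)$); (4) deduce the special case from $N = M$. The main obstacle I anticipate is step (3): the paper cites a $z$-ideal result \cite[Proposition 1.3]{MR321915} for the ``strong $z$'' case in Corollary \ref{c0.7}(b), but here one needs the genuinely minimal-prime version for $z^\circ$-ideals, so I would either locate the corresponding statement in \cite{MR1736781} or supply the short direct argument that $\mathfrak{P}_a \subseteq (I :_R J)$ whenever $a \in (I:_R J)$, using that $J$ is a $z^\circ$-ideal and that $\mathfrak{P}_{aJ} \subseteq \mathfrak{P}_a J$-type inclusions behave well. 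The coreduced hypothesis is presumably what guarantees the relevant minimal-prime-submodule/maximal-second-submodule correspondence in \cite[Theorem 2.3]{F407} is clean enough to push the argument through without set-theoretic pathologies.
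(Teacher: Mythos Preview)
Your overall strategy is exactly the paper's: invoke Lemma \ref{t19.3} to get that $Ann_R(N)$ is a $z^\circ$-ideal, rewrite $(K:_RN)$ as an ideal quotient involving $Ann_R(N)$ and $Ann_R(K)$, and then cite the fact (from \cite[Examples of $z^\circ$-ideals]{MR1736781}) that a colon ideal with a $z^\circ$-ideal in the right slot is again $z^\circ$. The ``in particular'' clause is handled identically, via Remark \ref{r1.1}.

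However, you have the two ideals swapped throughout, and this is not a harmless transposition. The correct identity is
\[
(K:_RN)=\bigl((0:_MAnn_R(K)):_RN\bigr)=(Ann_R(N):_RAnn_R(K)),
\]
since $rN\subseteq (0:_MAnn_R(K))$ is equivalent to $rAnn_R(K)\subseteq Ann_R(N)$. Correspondingly, the lemma you need is: \emph{if $I$ is a $z^\circ$-ideal and $J$ is any ideal, then $(I:_RJ)$ is a $z^\circ$-ideal}, applied with $I=Ann_R(N)$ and $J=Ann_R(K)$. Your stated version (``if $J$ is a $z^\circ$-ideal then $(I:_RJ)$ is $z^\circ$'') is false: in $R=k[x,y]/(xy)$ take $J=(x)$ (a minimal prime, hence $z^\circ$) and $I=(x^2)$; then $(I:_RJ)=(x,y)$, which is not a $z^\circ$-ideal because $x+y$ lies in no minimal prime. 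You may have been misled by the displayed identity in the proof of Corollary \ref{c0.7}(b), which appears to carry the same transposition; the proof of the present theorem in the paper has the order right. Once you swap the arguments, your steps (1)--(4) go through verbatim, and the short direct verification you sketch for the colon-ideal lemma also becomes straightforward: for $a\in (I:_RJ)$ and $b\in\mathfrak{P}_a$, every minimal prime containing $aj$ contains $bj$, so $bj\in\mathfrak{P}_{aj}\subseteq I$.
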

\begin{proof}
As $N$ is a $dqz^\circ$-submodule, $Ann_R(N)$ is a $z^\circ$-ideal of $R$ by Lemma \ref{t19.3}. Let $K$ be a submodule of $M$. Then  by \cite[Examples of $z^\circ$-ideals]{MR1736781}, $(Ann_R(N):_RAnn_R(K))$ is a $z^\circ$-ideal of $R$. Now
$$
(K:_RN)=((0:_MAnn_R(K)):_RN)=(Ann_R(N):_RAnn_R(K))
$$
implies that $(K:_RN)$ is a $z^\circ$-ideal of $R$. Now the last assertion follows from the fact that $\mathfrak{S}_M$ is a $dqz^\circ$-submodule of $M$ by Remark \ref{r1.1}.
\end{proof}

\begin{prop}\label{p1.1}
Let $M$ be a faithful finitely generated comultiplication $R$-module. If $N$ is a $dqz^\circ$-submodule of $ M$, then $Ann_R(N) \subseteq W_R(M)$.
\end{prop}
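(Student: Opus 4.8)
The plan is to verify the inclusion element by element: I want to show that every $a\in Ann_R(N)$ is a zero-divisor on $M$, i.e. that $a\in W_R(M)$. First I would fix an arbitrary $a\in Ann_R(N)$. Since $N$ is a non-zero submodule and a $dqz^\circ$-submodule of $M$, Definition \ref{d1.1} gives $0\neq N\subseteq \mathfrak{S}_{(0:_Ma)}$, so in particular $\mathfrak{S}_{(0:_Ma)}\neq 0$.

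Now $\mathfrak{S}_{(0:_Ma)}$ is, by construction, the sum of all maximal second submodules of $M$ contained in $(0:_Ma)$; being non-zero, it forces $(0:_Ma)$ to contain at least one maximal second submodule $S$ of $M$ (this is exactly the observation recorded in Remark \ref{r1.1}). A second submodule is by definition non-zero, so I may pick $0\neq s\in S\subseteq (0:_Ma)$; then $as=0$ with $s\neq 0$, which says precisely $a\in W_R(M)$. Since $a$ was an arbitrary element of $Ann_R(N)$, this yields $Ann_R(N)\subseteq W_R(M)$.

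Honestly, I do not expect a genuine obstacle here: once Remark \ref{r1.1} is in place the argument is a one-line extraction of a non-zero element of $M$ killed by $a$, and the hypotheses that $M$ be faithful, finitely generated and comultiplication play no role in this particular inclusion beyond ensuring the ambient theory (maximal second submodules, the invariant $\mathfrak{S}$) is non-trivial. The only point worth a second thought is bookkeeping about which auxiliary result to cite: alternatively, one can first invoke Lemma \ref{t19.3} to get that $Ann_R(N)$ is a $z^\circ$-ideal of $R$, and then use the classical fact that every minimal prime ideal of $R$ — and hence $\mathfrak{P}_a$ for each $a\in Ann_R(N)$ — consists of zero-divisors; that is the route in which the standing hypotheses on $M$ actually intervene.
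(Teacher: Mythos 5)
The extraction of a nonzero element killed by $a$ is fine as far as it goes: for $a\in Ann_R(N)$, Definition~\ref{d1.1} together with $N\neq 0$ forces $\mathfrak{S}_{(0:_Ma)}\neq 0$, hence $(0:_Ma)$ contains a nonzero (maximal) second submodule $S$, and any $0\neq s\in S$ satisfies $as=0$. The gap is the final identification ``which says precisely $a\in W_R(M)$.'' What you have shown is that $a$ lies in $Zd_R(M)$, the set of zero-divisors \emph{on the module}. The paper never defines $W_R(M)$, but its own proof pins the meaning down by quoting $Zd_R(R)=W_R(M)$ from \cite[Lemma 2.7]{MR3755273} (in this series of papers $W_R(M)=\{a\in R: aM\neq M\}$, the dual of the zero-divisor set). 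Neither reading makes your last step automatic: ``$a$ annihilates a nonzero element of $M$'' does not imply ``$a$ is a zero-divisor of $R$'' even for faithful finitely generated modules (over $\Bbb Z$ the module $\Bbb Z\oplus \Bbb Z/2\Bbb Z$ is faithful and finitely generated, and $2$ kills a nonzero element without being a zero-divisor of $\Bbb Z$), and passing from $Zd_R(M)$ to $\{a: aM\neq M\}$ needs the determinant-trick argument that $aM=M$ for a faithful finitely generated $M$ forces $a$ to be a unit. So your remark that the faithful, finitely generated and comultiplication hypotheses ``play no role'' is exactly where the proof leaks: those hypotheses are what convert your conclusion into membership in $W_R(M)$ as the paper understands it.

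Your fallback route is essentially the paper's actual proof: Lemma~\ref{t19.3} (which itself consumes the faithful finitely generated comultiplication hypotheses via \cite{F407}) gives that $Ann_R(N)$ is a $z^\circ$-ideal of $R$; every element of a proper $z^\circ$-ideal lies in a minimal prime and hence is a zero-divisor of $R$; and then \cite[Lemma 2.7]{MR3755273} converts $Zd_R(R)$ into $W_R(M)$. If you take that route, the citation of this last conversion is not optional bookkeeping --- it is the step that reaches the stated target --- so it must appear explicitly. Alternatively, keep your direct argument but finish it honestly: from $as=0$ with $s\neq 0$ conclude $aM\neq M$ using that $M$ is faithful and finitely generated, and then invoke whatever identification of $W_R(M)$ you intend.
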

\begin{proof}
By \cite[Lemma 2.7]{MR3755273}, $Zd_R(R)=W_R(M)$. Now the result follows from the fact that $Ann_R(N)$ is a $z^\circ$-ideal of $R$ by Lemma \ref{t19.3}.
\end{proof}

The following theorem gives some characterizations for $dqz^\circ$-submodules of a faithful finitely generated coreduced comultiplication $R$-module $M$.
\begin{thm}\label{t1.6}
Let $M$ be a faithful finitely generated coreduced comultiplication $R$-module. Then the following are equivalent:
\begin{itemize}
\item [(a)] $N$ is a $dqz^\circ$-submodule of $M$;
\item [(b)] For each $a \in Ann_R(N)$ and submodule $K$ of $M$, $\mathfrak{S}_{(0:_Ma)}=\mathfrak{S}_{K}$ implies that $N \subseteq K$;
\item [(c)] For each $a \in Ann_R(N)$ and submodule $K$ of $M$, $V^s((0:_Ma))=V^s(K)$ implies that $N \subseteq K$;
\item [(d)] For each $a \in R$, we have $a\in Ann_R(N)$ implies that $N \subseteq Ann_R(aM)M$;
\item [(e)] For $a, b \in R$, $Ann_R(aM)=Ann_R(bM)$ and $a\in Ann_R(N)$ imply that $b\in Ann_R(N)$;
\item [(f)] For $a, b\in R$, $Ann_R(aM)\subseteq Ann_R(bM)$ and $a\in Ann_R(N)$ imply that $b\in Ann_R(N)$.
\end{itemize}
\end{thm}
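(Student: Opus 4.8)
The plan is to establish two chains of equivalences that meet at (a): an elementary \emph{second-socle} chain $(a)\Leftrightarrow(b)\Leftrightarrow(c)$, which is valid for an arbitrary $R$-module and is patterned on the proof of Theorem \ref{t7.6}, and an \emph{ideal-theoretic} chain $(a)\Leftrightarrow(d)\Leftrightarrow(e)\Leftrightarrow(f)$ that routes through Lemma \ref{t19.3} and Remark \ref{rrr7.1}. Since (a)--(c) already involve only $\mathfrak{S}$ and $V^s$, the faithful/finitely generated/comultiplication/coreduced hypotheses will all be spent on (d)--(f).

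For the first chain, $(a)\Rightarrow(b)$ is just the definition of a $dqz^\circ$-submodule: if $a\in Ann_R(N)$ and $\mathfrak{S}_{(0:_Ma)}=\mathfrak{S}_K$, then $N\subseteq\mathfrak{S}_{(0:_Ma)}=\mathfrak{S}_K\subseteq K$. For $(b)\Rightarrow(c)$, note that $V^s((0:_Ma))=V^s(K)$ forces $\mathfrak{S}_{(0:_Ma)}=\mathfrak{S}_K$, so (b) applies. For $(c)\Rightarrow(a)$, fix $a\in Ann_R(N)$ and take $K:=\mathfrak{S}_{(0:_Ma)}$; since a maximal second submodule is contained in $\mathfrak{S}_{(0:_Ma)}$ exactly when it is contained in $(0:_Ma)$, one has $V^s(K)=V^s((0:_Ma))$, and (c) gives $N\subseteq K=\mathfrak{S}_{(0:_Ma)}$, i.e. (a).

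For the second chain I would first record the structural facts on which everything rests: (i) since $M$ is faithful, $Ann_R(aM)=(0:_Ra)=Ann_R(a)$ for each $a\in R$; (ii) since $M$ is a faithful finitely generated comultiplication module, the double-annihilator relations used in \cite{F407} give $Ann_R(a)M=(0:_MAnn_R(Ann_R(a)))$; and (iii) the coreducedness of $M$ is exactly what makes $R$ behave reducedly, so that $\mathfrak{P}_a=Ann_R(Ann_R(a))$ and the characterizations of $z^\circ$-ideals of $R$ from \cite{MR1736781} apply, namely that $I$ is a $z^\circ$-ideal iff [$Ann_R(a)\subseteq Ann_R(b)$ and $a\in I$ imply $b\in I$] iff the same with equality. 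Together with Remark \ref{rrr7.1} (which gives $(0:_M\mathfrak{P}_a)=\mathfrak{S}_{(0:_Ma)}$), facts (ii) and (iii) yield the identity $\mathfrak{S}_{(0:_Ma)}=Ann_R(aM)M$. Now Lemma \ref{t19.3} says $(a)\Leftrightarrow$ "$Ann_R(N)$ is a $z^\circ$-ideal"; translating the two $z^\circ$-ideal characterizations through (i) produces (f) (with $(f)\Rightarrow(e)$ trivial) and (e), where $(e)\Rightarrow(a)$ uses the equality form of the characterization followed by Lemma \ref{t19.3}. Finally $(a)\Leftrightarrow(d)$: from (a) and Lemma \ref{t19.3}, $a\in Ann_R(N)$ gives $Ann_R(Ann_R(a))=\mathfrak{P}_a\subseteq Ann_R(N)$, hence $N=(0:_MAnn_R(N))\subseteq(0:_MAnn_R(Ann_R(a)))=Ann_R(a)M=Ann_R(aM)M$; conversely, if (d) holds then $N\subseteq Ann_R(aM)M=(0:_M\mathfrak{P}_a)=\mathfrak{S}_{(0:_Ma)}$ for each $a\in Ann_R(N)$, which is (a).

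I expect the real obstacle to be the passage to ring-theoretic language in (iii): one must be certain that the hypothesis "$M$ coreduced" delivers the reducedness-type behaviour of $R$ needed for $\mathfrak{P}_a=Ann_R(Ann_R(a))$ and for the $z^\circ$-ideal characterizations of \cite{MR1736781}, and that a faithful finitely generated comultiplication module does satisfy the double-annihilator conditions behind (ii). A smaller point is bridging the "$\subseteq$" hypothesis of (f) to the "$=$" hypothesis usable in the characterization; this is handled by the reduced-ring observation that $Ann_R(a)\subseteq Ann_R(b)$ implies $Ann_R(ab)=Ann_R(b)$, so that one may replace $a$ by $ab\in Ann_R(N)$. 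Once these are in place, the remainder is the routine bookkeeping sketched above.
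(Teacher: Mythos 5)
Your proposal is correct, but it closes the equivalences differently from the paper. The paper proves the single cycle $(a)\Rightarrow(b)\Rightarrow(c)\Rightarrow(d)\Rightarrow(e)\Rightarrow(f)\Rightarrow(a)$: the step $(c)\Rightarrow(d)$ uses $V^s((0:_Ma))=V^s(Ann_R(aM)M)$ from \cite{F407}, the step $(e)\Rightarrow(f)$ is delegated to \cite[Corollary 2.7]{F407}, and the loop is closed by a module-theoretic argument for $(f)\Rightarrow(a)$ that writes $\mathfrak{S}_{(0:_Ma)}=Ann_R(aM)M$, takes an arbitrary completely irreducible $L\supseteq Ann_R(aM)M$, and uses Remark \ref{r2.1}. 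You instead build two chains anchored at $(a)$: the observation that $V^s(\mathfrak{S}_{(0:_Ma)})=V^s((0:_Ma))$ gives a direct, hypothesis-free $(c)\Rightarrow(a)$ (the paper never proves this implication directly), and the chain $(a)\Leftrightarrow(d)\Leftrightarrow(e)\Leftrightarrow(f)$ is obtained by translating everything into the statement ``$Ann_R(N)$ is a $z^\circ$-ideal'' via Lemma \ref{t19.3} and the classical characterizations of $z^\circ$-ideals in reduced rings from \cite{MR1736781}. What your route buys is a cleaner logical structure (the first chain needs no hypotheses on $M$ at all) and a conceptual reading of $(d)$--$(f)$ as the module-side shadow of ring-theoretic facts; your reduced-ring bridge $Ann_R(a)\subseteq Ann_R(b)\Rightarrow Ann_R(ab)=Ann_R(b)$ is exactly the content of the cited \cite[Corollary 2.7]{F407}, here reproved. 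The cost is a heavier reliance on the dictionary $\mathfrak{P}_a=Ann_R(Ann_R(a))$ and on ``$M$ coreduced $\Rightarrow R$ reduced'': you should source the latter from \cite{F407} or \cite{MR3755273} rather than from Theorem \ref{t4.8}, whose proof in this paper invokes the present theorem and would otherwise create a circularity. Both arguments ultimately rest on the same identities $\mathfrak{S}_{(0:_Ma)}=(0:_M\mathfrak{P}_a)=Ann_R(aM)M$ imported from \cite{F407}.
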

\begin{proof}
$(a)\Rightarrow (b)$
Let $a \in Ann_R(N)$ and $K$ be a submodule of $M$ such that $\mathfrak{S}_{(0:_Ma)}=\mathfrak{S}_{K}$. By part (a), $N \subseteq \mathfrak{S}_{(0:_Ma)}$. Thus $N \subseteq \mathfrak{S}_{K} \subseteq K$.

$(b)\Rightarrow (c)$
Let $a \in Ann_R(N)$ and $K$ be a submodule of $M$ such that $V^s((0:_Ma))=V^s(K)$. Then  $\mathfrak{S}_{(0:_Ma)}=\mathfrak{S}_{K}$. Thus by part (b), $N \subseteq K$.

$(c)\Rightarrow (d)$
Let $a\in Ann_R(N)$. Then  $V^s((0:_Ma))=V^s(Ann_R(aM)M)$  by using \cite[Theorem 2.5(a)]{F407}. Thus by part (c), $N \subseteq Ann_R(aM)M$.

$(d)\Rightarrow (e)$
Let $a, b  \in R$, $Ann_R(aM)=Ann_R(bM)$ and  $a \in Ann_R(N)$. Then $Ann_R(aM)M=Ann_R(bM)M$. By part (d), $N\subseteq Ann_R(aM)M$. Thus $N\subseteq Ann_R(bM)M\subseteq (0:_Mb)$. Hence $b\in Ann_R(N)$.

$(e)\Rightarrow (f)$
By \cite[Corollary 2.7]{F407}.

$(f)\Rightarrow (a)$
Let $a\in Ann_R(N)$. By \cite[Corollary 2.20]{F407}, $Ann_R(aM)M=\mathfrak{S}_{(0:_Ma)}$. Let $Ann_R(aM)M\subseteq L$, where $L$ is a completely irreducible submodule of $M$. Then $Ann_R(aM)\subseteq (L:_RM)$. As $M$ is a comultiplication $R$-module, $L=(0:_MJ)$ for some ideal $J$ or $R$. Let $b \in J$. Then $ Ann_R(JM) \subseteq Ann_R(bM)$. Therefore, $Ann_R(aM) \subseteq Ann_R(bM)$. Thus by part (f), $b \in Ann_R(N)$ and so $N\subseteq (0:_MJ)=L$. This implies that
$N\subseteq Ann_R(aM)M=\mathfrak{S}_{(0:_Ma)}$ by Remark \ref{r2.1}.
\end{proof}

\begin{cor}\label{c1.7}
Every non-zero submodule of a faithful finitely generated coreduced multiplication and comultiplication $R$-module is a $dqz^\circ$-submodule.
\end{cor}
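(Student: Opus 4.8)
The plan is to reduce everything to condition (d) of Theorem \ref{t1.6}. Since $M$ is a faithful finitely generated coreduced comultiplication $R$-module, Theorem \ref{t1.6} is available, so a non-zero submodule $N$ of $M$ is a $dqz^\circ$-submodule precisely when: for every $a \in R$ with $a \in Ann_R(N)$ one has $N \subseteq Ann_R(aM)M$. Hence it suffices to verify this implication for an arbitrary non-zero submodule $N$, and the only extra hypothesis I will use beyond those of Theorem \ref{t1.6} is that $M$ is also a multiplication module.

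So I would fix a non-zero submodule $N$ and take $a \in Ann_R(N)$; then $aN = 0$, i.e.\ $N \subseteq (0:_M a)$. The key step is to identify the submodule $(0:_M a)$. Because $M$ is a multiplication module, $(0:_M a) = JM$ with $J = ((0:_M a):_R M)$, and a one-line computation pins down $J$: for $r \in R$ we have $r \in J$ if and only if $(ar)M = 0$, if and only if $ar \in Ann_R(M)$, if and only if $ar = 0$, where the last equivalence uses that $M$ is faithful; thus $J = (0:_R a) = Ann_R(aM)$. Therefore $N \subseteq (0:_M a) = Ann_R(aM)M$, which is exactly condition (d) of Theorem \ref{t1.6}, and we conclude that $N$ is a $dqz^\circ$-submodule of $M$. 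The particular instance of the $\Bbb Z$-module $\Bbb Z_n$ with $n$ square-free then follows, as in Example \ref{t97.9}.

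I expect the identity $(0:_M a) = Ann_R(aM)M$ to be the only point requiring any care: it is where the multiplication hypothesis (to write the submodule in the form $IM$) and faithfulness (to collapse the colon ideal to $(0:_R a)$) are both used, whereas the coreduced and finitely generated assumptions enter only through the statement of Theorem \ref{t1.6}. Once this identity is in hand the corollary is immediate, with no case analysis needed. As a remark, one could equally bypass Theorem \ref{t1.6} and argue directly from Definition \ref{d1.1}, since $\mathfrak{S}_{(0:_M a)} = Ann_R(aM)M$ for such modules; the same computation then gives $N \subseteq (0:_M a) = Ann_R(aM)M = \mathfrak{S}_{(0:_M a)}$ for every $a \in Ann_R(N)$.
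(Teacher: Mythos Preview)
Your proof is correct and follows essentially the same route as the paper: both establish the identity $(0:_M a)=((0:_M a):_R M)M=Ann_R(aM)M$ from the multiplication hypothesis, observe that $a\in Ann_R(N)$ gives $N\subseteq (0:_M a)$, and then invoke the implication $(d)\Rightarrow(a)$ of Theorem~\ref{t1.6}. Your write-up is slightly more detailed in justifying the colon-ideal computation (and you route it through $(0:_R a)$ using faithfulness, whereas $((0:_M a):_R M)=Ann_R(aM)$ already holds without that assumption), but the strategy is identical.
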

\begin{proof}
Since $M$ is a multiplication $R$-module, for each $a \in M$ we have $(0:_Ma)=((0:_Ma):_RM)M=Ann_R(aM)M$. Let $N$ be a non-zero submodule of $M$ and $a \in Ann_R(N)$. Then $N \subseteq (0:_Ma)$.  Now the result follows from Theorem \ref{t1.6} $(d)\Rightarrow (a)$.
\end{proof}

\begin{ex}\label{e3.9}
The $\Bbb Z_n$-module $\Bbb Z_n$, where $n$ is square free, is a faithful finitely generated coreduced comultiplication and multiplication $\Bbb Z_n$-module. Thus
 each non-zero submodule of $\Bbb Z_n$-module $\Bbb Z_n$ is a $dqz^\circ$-submodule by Corollary \ref{c1.7}.
\end{ex}

\begin{prop}\label{pp0701.14}
Let $N$ be a $dqz^\circ$-submodule of an $R$-module $M$. Then
for each $r \in R$, $rN$ is a $dqz^\circ$-submodule of $M$.
\end{prop}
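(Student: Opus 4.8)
The plan is to imitate the argument already used to show that $rN$ is a $dz^\circ$-submodule (resp. $dsz^\circ$-submodule) whenever $N$ is, but carried out directly at the level of annihilators, since the defining property of a $dqz^\circ$-submodule in Definition \ref{d1.1} is phrased in terms of the elements of $Ann_R(N)$ rather than in terms of completely irreducible submodules. Throughout I assume $rN\neq 0$, so that asking whether $rN$ is a $dqz^\circ$-submodule is meaningful.

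First I would fix $r\in R$ and take an arbitrary $a\in Ann_R(rN)$; the goal is to prove $rN\subseteq \mathfrak{S}_{(0:_Ma)}$. The two elementary observations driving the proof are that $ra\in Ann_R(N)$ (since $(ra)N=a(rN)=0$) and that $(0:_Mra)=((0:_Ma):_Mr)$, the latter being a one-line check: $x\in(0:_Mra)$ iff $rax=0$ iff $rx\in(0:_Ma)$ iff $x\in((0:_Ma):_Mr)$.

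Next, applying the hypothesis that $N$ is a $dqz^\circ$-submodule to the element $ra\in Ann_R(N)$ gives $N\subseteq \mathfrak{S}_{(0:_Mra)}=\mathfrak{S}_{((0:_Ma):_Mr)}$. Then Lemma \ref{ll0701.14}, applied with $K=(0:_Ma)$, yields $\mathfrak{S}_{((0:_Ma):_Mr)}\subseteq(\mathfrak{S}_{(0:_Ma)}:_Mr)$, so $N\subseteq(\mathfrak{S}_{(0:_Ma)}:_Mr)$, which is exactly $rN\subseteq \mathfrak{S}_{(0:_Ma)}$. Since $a\in Ann_R(rN)$ was arbitrary, $rN$ is a $dqz^\circ$-submodule of $M$.

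I do not anticipate any genuine obstacle: every step is either a routine annihilator/residual manipulation or a direct invocation of Lemma \ref{ll0701.14}, and the structure parallels the $dz^\circ$/$dsz^\circ$ case almost verbatim. The only point deserving a word of care is the degenerate case $rN=0$, which must be excluded or handled by convention, since a $dqz^\circ$-submodule is required to be non-zero.
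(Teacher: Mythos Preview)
Your proof is correct and follows essentially the same route as the paper's own argument: from $a\in Ann_R(rN)$ deduce $ar\in Ann_R(N)$, apply the $dqz^\circ$-hypothesis to get $N\subseteq\mathfrak{S}_{(0:_Mar)}$, then use Lemma~\ref{ll0701.14} with $K=(0:_Ma)$ to conclude $rN\subseteq\mathfrak{S}_{(0:_Ma)}$. Your explicit remark on the degenerate case $rN=0$ is a welcome clarification that the paper itself leaves implicit.
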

\begin{proof}
Suppose that $r \in R$ and $a \in Ann_R(rN)$. Then $ar \in Ann_R(N)$. Now by assumption, $N \subseteq \mathfrak{S}_{(0:_Mar)}$. By Lemma \ref{ll0701.14}, we have
$$
\mathfrak{S}_{(0:_Mar)}=\mathfrak{S}_{((0:_Ma):_Mr)}\subseteq (\mathfrak{S}_{(0:_Ma)}:_Mr).
$$
Thus $N\subseteq (\mathfrak{S}_{(0:_Ma)}:_Mr)$ and so $rN \subseteq \mathfrak{S}_{(0:_Ma)}$, as needed.
\end{proof}

\begin{thm}\label{t4.8}
Let $M$ be a faithful finitely generated comultiplication $R$-module. Then the following are equivalent:
\begin{itemize}
\item [(a)] $M$ is a coreduced module, i.e., $R$ is a reduced ring;
\item [(b)] The submodule $M$ is a $dqz^\circ$-submodule of $M$.
\end{itemize}
\end{thm}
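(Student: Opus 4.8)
The plan is to translate the $dqz^\circ$-property of the module $M$ into a property of the zero ideal of $R$ via Lemma \ref{t19.3}, and then read off reducedness. First I would use faithfulness: since $Ann_R(M)=(0)$, the only scalar to be tested in Definition \ref{d1.1} is $a=0$, for which $(0:_M0)=M$; hence $M$ is a $dqz^\circ$-submodule of itself exactly when $M\subseteq\mathfrak{S}_M$, i.e.\ when $\mathfrak{S}_M=M$ (this also drops out of Remark \ref{r1.1}, where $\mathfrak{S}_M$, when nonzero, is the largest $dqz^\circ$-submodule of $M$). More usefully, Lemma \ref{t19.3} applied with $N=M$ says that (b) holds if and only if $Ann_R(M)=(0)$ is a $z^\circ$-ideal of $R$.

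Next I would unpack what it means for $(0)$ to be a $z^\circ$-ideal. By Definition \ref{n1.1}(2) this holds iff $\mathfrak{P}_a\subseteq(0)$ for all $a\in(0)$, i.e.\ iff $\mathfrak{P}_0\subseteq(0)$. But $\mathfrak{P}_0$ is the intersection of all minimal prime ideals of $R$ containing $0$, that is, of all minimal primes of $R$; since every prime ideal of $R$ contains a minimal one, this intersection is precisely the nilradical $\mathrm{Nil}(R)$. Therefore $(0)$ is a $z^\circ$-ideal of $R$ if and only if $\mathrm{Nil}(R)=(0)$, i.e.\ if and only if $R$ is reduced. Combining with the previous paragraph gives (b) $\Leftrightarrow$ ``$R$ is reduced,'' which is exactly (a) under the identification of ``$M$ coreduced'' with ``$R$ reduced'' for a faithful finitely generated comultiplication module.

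As a cross-check on the genuinely module-theoretic direction I would also record a direct proof of (b) $\Rightarrow$ (a): assuming $\mathfrak{S}_M=M$, take $r\in R$ with $r^2=0$; on each second submodule $S$ of $M$ multiplication by $r$ is zero or surjective, and surjectivity forces $S=rS=r^2S=0$, impossible, so $rS=0$ for every second $S$, whence $rM=r\mathfrak{S}_M=0$ and $r=0$ by faithfulness; a one-line power argument then upgrades this to $\mathrm{Nil}(R)=(0)$. The only step that is not a formal unwinding of definitions is the equivalence between ``$M$ is coreduced'' and ``$R$ is reduced'' hidden behind the ``i.e.'' in (a); I would not reprove it here but quote it from the theory of coreduced comultiplication modules (e.g.\ \cite{MR3755273}), since Lemma \ref{t19.3} together with the computation $\mathfrak{P}_0=\mathrm{Nil}(R)$ supplies everything else.
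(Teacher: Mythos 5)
Your argument is correct, and it takes a genuinely different route from the paper's. The paper does not use Lemma \ref{t19.3} at all: it proves $(a)\Rightarrow(b)$ by verifying condition (d) of Theorem \ref{t1.6} for the (only) element $a\in Ann_R(M)=(0)$, and proves $(b)\Rightarrow(a)$ by taking $a$ with $(Ra)^2=0$, observing that $\mathfrak{S}_{(0:_Ma)}=\mathfrak{S}_{(0:_Ma^2)}=\mathfrak{S}_M$, and invoking Theorem \ref{t1.6} $(a)\Rightarrow(b)$ to force $(0:_Ma)=M$, whence $a=0$ by faithfulness. You instead funnel both directions through the annihilator characterization of Lemma \ref{t19.3}, reducing the whole theorem to the purely ring-theoretic observation that $(0)$ is a $z^\circ$-ideal iff $\mathfrak{P}_0=\mathrm{Nil}(R)=(0)$ iff $R$ is reduced; your ``cross-check'' of $(b)\Rightarrow(a)$ via the dichotomy $rS=0$ or $rS=S$ for second submodules is essentially an unpackaged, more elementary version of the paper's $\mathfrak{S}_{(0:_Ma)}=\mathfrak{S}_{(0:_Ma^2)}$ step. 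What your route buys is brevity and transparency --- it makes clear that the theorem is really the statement $\mathfrak{P}_0=\mathrm{Nil}(R)$ dressed in module language, and it avoids the coreducedness-dependent machinery of Theorem \ref{t1.6} entirely, needing only Remark \ref{rrr7.1} (i.e.\ \cite[Theorem 2.3]{F407}) through Lemma \ref{t19.3}. What the paper's route buys is consistency with the surrounding section, whose results are all phrased via the equivalent conditions of Theorem \ref{t1.6}. Both proofs, like the statement itself, leave the identification ``$M$ coreduced iff $R$ reduced'' (the ``i.e.''\ in (a)) to the cited literature, so you are not omitting anything the paper supplies.
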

\begin{proof}
$(a)\Rightarrow (b)$
Let $a \in Ann_R(M)$. Then $(0:_Ma)=M$ and so $((0:_Ma):_RM)M=M$.
Now, the result follows from Theorem \ref{t1.6} $(d)\Rightarrow (a)$.

$(b)\Rightarrow (a)$
Let $a \in R$ such that $(Ra)^2=0$. It is clear that $\mathfrak{S}_{(0:_Ma)}=\mathfrak{S}_{(0:_Ma^2)}$. Thus
$\mathfrak{S}_{(0:_Ma)}=\mathfrak{S}_{(0:_Ma^2)}=\mathfrak{S}_{M}$. Since the submodule $M$ is a $dqz^\circ$-submodule, $(0:_Ma)=M$ by Theorem \ref{t1.6} $(a)\Rightarrow (b)$. Now since $M$ is faithful, $a=0$ as required.
\end{proof}

\begin{thm}\label{t7.9}
Let $N$ be a $dqz^\circ$-submodule of a faithful finitely generated coreduced comultiplication $R$-module $M$. Then every
maximal second submodule contained in $N$ is a $dqz^\circ$-submodule of $M$.
\end{thm}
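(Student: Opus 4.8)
The plan is to imitate the proof of Theorem~\ref{t97.9}, using Theorem~\ref{t1.6} in the role that Corollary~\ref{c9.6} played there. Let $S$ be a maximal second submodule contained in $N$; since $S$ is second, $Ann_R(S)$ is a prime ideal of $R$. Because $M$ is a faithful finitely generated coreduced comultiplication $R$-module, Theorem~\ref{t1.6} applies with $S$ in place of $N$, so it suffices to verify condition (e) of that theorem for $S$: if $a,b\in R$ satisfy $Ann_R(aM)=Ann_R(bM)$ and $a\in Ann_R(S)$, then $b\in Ann_R(S)$. Fix such $a$ and $b$. From $a\in Ann_R(S)$ we get $aS=0$, hence $S\subseteq(0:_Ma)$, so $(0:_Ma)$ is a submodule of $M$ containing $S$.

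Next, exactly as in the proof of Theorem~\ref{t97.9} --- where the same device is applied to a submodule $K$ with $S\subseteq K$ --- I would invoke the results of \cite{F407} on maximal second submodules (the analogues of \cite[Prop.~2.14 and Cor.~2.15]{F407}) to produce an element $c\in((0:_Ma):_RM)\setminus Ann_R(S)$. From $c\in((0:_Ma):_RM)$ we get $caM=0$, and faithfulness of $M$ forces $ca=0$; thus $c\in Ann_R(aM)=Ann_R(bM)$, whence $cbM=0$ and, again by faithfulness, $cb=0$. On the other hand $c\notin Ann_R(S)$ and $S$ is second, so multiplication by $c$ is surjective on $S$, i.e.\ $cS=S$. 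Combining these,
$$
bS=b(cS)=(bc)S=(cb)S=0,
$$
that is, $b\in Ann_R(S)$. By Theorem~\ref{t1.6} this proves that $S$ is a $dqz^\circ$-submodule of $M$.

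The one step that is not purely formal is the production of the element $c$, i.e.\ the assertion $((0:_Ma):_RM)\not\subseteq Ann_R(S)$; this is exactly where the maximality of $S$ as a second submodule of $N$ is used, and it should follow from the double-annihilator (DAC-type) results of \cite{F407} that also underlie Remark~\ref{rrr7.1} and Theorem~\ref{t1.6}: maximality forces $Ann_R(S)$ to be minimal among the primes arising as annihilators of second submodules lying in $N$, which makes the double-annihilator ideal $((0:_Ma):_RM)=Ann_R(aM)$ too large to be contained in $Ann_R(S)$. Once $c$ is available, the computation $bS=(cb)S=0$ closes the argument in a single line, in complete parallel with the last step of the proof of Theorem~\ref{t97.9}. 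An alternative, slightly different route is through Lemma~\ref{t19.3}: since $N$ is a $dqz^\circ$-submodule, $Ann_R(N)$ is a $z^\circ$-ideal of $R$; the \cite{F407} results give that $Ann_R(S)$ is a minimal prime ideal of $R$, hence a $z^\circ$-ideal, and therefore $S$ is a $dqz^\circ$-submodule of $M$ by Lemma~\ref{t19.3}.
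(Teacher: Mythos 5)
Your main argument is essentially the paper's proof with one step streamlined, and it is correct modulo the same external input the paper itself relies on. The paper also reduces to Theorem~\ref{t1.6} (it uses condition~(f) rather than~(e), an immaterial difference), also produces an element $c\in Ann_R(aM)\setminus Ann_R(S)$ --- citing \cite[Proposition 2.1]{F407} for exactly the non-containment $Ann_R(aM)\not\subseteq Ann_R(S)$ that you flag as the one non-formal step --- and also finishes by exploiting that $S$ is second with $Ann_R(S)$ prime. Where you diverge is the endgame: the paper deduces $Ann_R(caM)\subseteq Ann_R(cbM)$, invokes the hypothesis that $N$ is a $dqz^\circ$-submodule to get $cb\in Ann_R(N)\subseteq Ann_R(S)$, and then uses primality of $Ann_R(S)$ together with $c\notin Ann_R(S)$ to conclude $b\in Ann_R(S)$; you instead observe that $c\in Ann_R(aM)=Ann_R(bM)$ plus faithfulness gives $cb=0$ outright, so $bS=(cb)S=0$ in one line. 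Your shortcut is valid (and in fact the paper's own step already yields $cb=0$ implicitly, since $caM=0$ forces $Ann_R(caM)=R$), but it has the consequence that the hypothesis ``$N$ is a $dqz^\circ$-submodule'' is never used in your write-up except, possibly, inside the unproved existence of $c$; you should be explicit about whether the \cite{F407} result you need requires that hypothesis, since otherwise your argument proves a strictly stronger statement and that deserves a sanity check rather than silence.

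Two caveats. First, the production of $c$ is the entire content of the theorem, and your justification for it is speculative (``it should follow from the DAC-type results''); this matches the paper's level of rigor (a bare citation), so it is not a gap relative to the paper, but your heuristic --- that maximality of $S$ makes $Ann_R(S)$ minimal among annihilators of second submodules inside $N$ --- is not an argument. Second, the ``alternative route'' at the end does not work as stated: if $S$ is only maximal among the second submodules contained in $N$, there is no reason for $Ann_R(S)$ to be a minimal prime of $R$ (if $S$ were a maximal second submodule of $M$ itself, the conclusion is already Remark~\ref{r1.1} and the theorem would be vacuous), so you cannot conclude that $Ann_R(S)$ is a $z^\circ$-ideal and apply Lemma~\ref{t19.3} that way. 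Drop that aside or justify the minimality claim.
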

\begin{proof}
Let $S$ be a maximal second submodule contained in $N$. Assume that $Ann(aM) \subseteq Ann(bM)$, where
$a\in Ann_R(S)$ and $b \in R$. By \cite[Proposition 2.1]{F407}, there exists $c \in Ann_R(aM) \setminus Ann_R(S)$. So, $ca \in Ann_R(N)$ and $c \not \in Ann_R(S)$. We have  $Ann(caM) \subseteq Ann(cbM)$. As $N$ is a $dqz^\circ$-submodule of $M$, we get that
$cb \in Ann_R(N)\subseteq Ann_R(S)$. As $c \not \in Ann_R(S)$ and $Ann_R(S)$ is a prime ideal of $R$, $b \in Ann_R(S)$. Now the result follows from Theorem \ref{t1.6} $(f)\Rightarrow (a)$.
\end{proof}

\begin{cor}\label{c7.12}
If $M$ is a faithful finitely generated coreduced comultiplication $R$-module, then every minimal $dqz^\circ$-submodule is a
second $dqz^\circ$-submodule.
\end{cor}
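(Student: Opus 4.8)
The plan is to prove directly that a minimal $dqz^\circ$-submodule $N$ of $M$ is a second submodule; since $N$ is a $dqz^\circ$-submodule by hypothesis, this immediately gives the assertion. Here a \emph{minimal $dqz^\circ$-submodule} is understood as a minimal element of the family of all $dqz^\circ$-submodules of $M$ ordered by inclusion, so in particular $N\neq 0$.

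First I would fix an arbitrary $r\in R$ and look at the submodule $rN\subseteq N$. If $rN\neq 0$, then by Proposition \ref{pp0701.14} (that $rN$ is a $dqz^\circ$-submodule of $M$ for each $r\in R$), applied to the $dqz^\circ$-submodule $N$, the submodule $rN$ is again a $dqz^\circ$-submodule of $M$; of course that proposition is to be read with the standing convention that a $dqz^\circ$-submodule is nonzero, so it yields a genuine $dqz^\circ$-submodule precisely in the case $rN\neq 0$. Since $rN$ is then a $dqz^\circ$-submodule contained in $N$ and $N$ is minimal among such, we are forced to have $rN=N$.

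Collecting the two alternatives, for every $r\in R$ either $rN=0$ or $rN=N$; in other words, multiplication by each $r\in R$ is zero or surjective on $N$. Combined with $N\neq 0$, this is exactly the definition of a second submodule of $M$. Hence $N$ is second, and being a $dqz^\circ$-submodule by assumption, $N$ is a second $dqz^\circ$-submodule, as required.

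There is essentially no obstacle in this argument: it is a one-line consequence of Proposition \ref{pp0701.14} together with the minimality of $N$, and the hypotheses that $M$ be faithful, finitely generated, coreduced and a comultiplication module are not actually used in this particular deduction. The only point that needs a moment's care is the reading of Proposition \ref{pp0701.14} noted above. I would also remark that the tempting alternative route through Theorem \ref{t7.9} (``every maximal second submodule contained in $N$ is a $dqz^\circ$-submodule'') does \emph{not} suffice, since a minimal $dqz^\circ$-submodule of $M$ need not contain any maximal second submodule of $M$.
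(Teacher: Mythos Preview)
Your argument is correct. The key step---applying Proposition~\ref{pp0701.14} to conclude that $rN$ is a $dqz^\circ$-submodule whenever $rN\neq 0$, and then invoking minimality to force $rN=N$---is clean and complete, and your reading of the nonzero convention in that proposition is exactly right.

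The paper leaves Corollary~\ref{c7.12} without proof, but its placement immediately after Theorem~\ref{t7.9} (and the fact that it carries the same hypotheses on $M$) strongly suggests the intended route is through that theorem: pick a second submodule of $M$ maximal among those contained in $N$, observe by Theorem~\ref{t7.9} that it is a $dqz^\circ$-submodule, and conclude by minimality that it equals $N$. Your route through Proposition~\ref{pp0701.14} is genuinely different, more elementary, and---as you note---does not use any of the hypotheses that $M$ be faithful, finitely generated, coreduced, or a comultiplication module. So your argument actually proves a stronger statement than the corollary as written.

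One small point about your closing remark: your own proof shows that $N$ is second, so a posteriori $N$ \emph{is} the maximal second submodule contained in itself, and Theorem~\ref{t7.9} is not vacuous for $N$. The accurate way to phrase your objection is that the route via Theorem~\ref{t7.9} requires one to know \emph{in advance} that $N$ contains some second submodule (and a maximal one among these), and this is not obvious from the hypotheses without an additional argument. Your concern about that route is therefore legitimate, but it is about an a priori gap rather than an outright failure.
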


\begin{cor}\label{c7.13}
Let $M$ be a faithful finitely generated coreduced comultiplication $R$-module and $S$ be a second submodule of $M$. Then either $S$ is a $dqz^\circ$-submodule or contained in a minimal $dqz^\circ$-submodule which is a second $dqz^\circ$-submodule.
\end{cor}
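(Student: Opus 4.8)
The assertion is a dichotomy, and since Corollary~\ref{c7.12} already tells us that every minimal $dqz^\circ$-submodule of $M$ is a second $dqz^\circ$-submodule, the second alternative only requires producing a minimal $dqz^\circ$-submodule that contains $S$. So if $S$ is itself a $dqz^\circ$-submodule there is nothing to prove; otherwise the plan is to enlarge $S$ to a suitable maximal second submodule $S^{*}$ of $M$ and then to show that $S^{*}$ is minimal among the $dqz^\circ$-submodules of $M$. (In fact the construction below works for \emph{every} second submodule, so it also shows that every second submodule of $M$ is contained in a minimal $dqz^\circ$-submodule.)

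First I would assemble the ingredients. Since $S$ is second, $\mathfrak{q}:=Ann_R(S)$ is a prime ideal of $R$; choose a minimal prime $\mathfrak{p}$ of $R$ with $\mathfrak{p}\subseteq\mathfrak{q}$ and put $S^{*}:=(0:_M\mathfrak{p})$. Because $M$ is faithful, finitely generated and comultiplication, $(0:_M\mathfrak{p})\neq 0$ and the maximal second submodules of $M$ are exactly the submodules $(0:_M\mathfrak{p}')$ with $\mathfrak{p}'$ a minimal prime of $R$ (the relevant facts about maximal second submodules and minimal primes in such a module are in \cite{F407, MR3934877}). Hence $S^{*}$ is a maximal second submodule of $M$, it is a $dqz^\circ$-submodule by Remark~\ref{r1.1}, and $\mathfrak{p}S\subseteq\mathfrak{q}S=0$ gives $S\subseteq S^{*}$.

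The core step is the minimality of $S^{*}$. Suppose $N$ is a nonzero $dqz^\circ$-submodule of $M$ with $N\subseteq S^{*}$; I claim $N=S^{*}$. From $N\subseteq(0:_M\mathfrak{p})$ one gets $\mathfrak{p}\subseteq Ann_R(N)=:J$, and by Lemma~\ref{t19.3} the ideal $J$ is a $z^\circ$-ideal of $R$, and it is proper because $N\neq 0$. By the definition of a $z^\circ$-ideal, $\mathfrak{P}_a\subseteq J$ for each $a\in J$, and since $J$ is proper this forces every $a\in J$ to lie in some minimal prime of $R$; thus $J$ is contained in the union of the minimal primes of $R$. Using that $R$ has only finitely many minimal primes here, prime avoidance yields a minimal prime $\mathfrak{p}''$ of $R$ with $\mathfrak{p}\subseteq J\subseteq\mathfrak{p}''$, and minimality of $\mathfrak{p}$ forces $\mathfrak{p}=\mathfrak{p}''$, hence $J=\mathfrak{p}$. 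Since $M$ is comultiplication, $N=(0:_M Ann_R(N))=(0:_M\mathfrak{p})=S^{*}$. Therefore $S^{*}$ is a minimal $dqz^\circ$-submodule containing $S$, and by Corollary~\ref{c7.12} it is a second $dqz^\circ$-submodule.

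The step I expect to be the main obstacle is this minimality claim, which at bottom is the ring-theoretic fact that a proper $z^\circ$-ideal of $R$ cannot contain a minimal prime properly. Making it precise uses the $z^\circ$-ideal/annihilator dictionary of Lemma~\ref{t19.3}, the observation that $R$ is reduced (Theorem~\ref{t4.8}) so that the elements of a proper $z^\circ$-ideal are zero-divisors and hence lie in minimal primes, and the finiteness of the set of minimal primes of $R$ for a faithful finitely generated comultiplication module (so that prime avoidance is available). One should also be careful that $M$ need not satisfy the double annihilator condition, so the structural facts that $(0:_M\mathfrak{p})$ is a maximal second submodule and that these exhaust $Max^s(M)$ should be cited from \cite{F407} rather than re-derived from DAC.
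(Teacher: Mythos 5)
The paper offers no proof of this corollary --- it is simply stated after Theorem \ref{t7.9} and Corollary \ref{c7.12} --- so your argument has to stand on its own rather than be compared line by line. Its architecture is attractive and, if completed, proves more than is asked: you identify the minimal $dqz^\circ$-submodule explicitly as $S^*=(0:_M\mathfrak{p})$ for a minimal prime $\mathfrak{p}\subseteq Ann_R(S)$, rather than merely establishing the dichotomy. The reduction of the minimality of $S^*$ to the ring-theoretic statement ``a proper $z^\circ$-ideal of $R$ containing a minimal prime equals that minimal prime,'' via Lemma \ref{t19.3} and the identity $N=(0:_MAnn_R(N))$ for comultiplication modules, is correct as far as it goes.

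The genuine gap is exactly where you suspect it: the prime-avoidance step. You invoke ``the finiteness of the set of minimal primes of $R$ for a faithful finitely generated comultiplication module'' with no proof and no precise citation, and nothing in the paper supplies it. This is not a removable convenience, because the ring-theoretic claim fails without some such finiteness. For instance, $R=k[x_1,x_2,\dots]/(x_ix_j : i\neq j)$ is reduced with minimal primes $\mathfrak{p}_i=(\{x_j\}_{j\neq i})$, and the proper ideal $J=(x_1,x_2,\dots)$ is a $z^\circ$-ideal (for $a\in J$ one checks that $\mathfrak{P}_a$ consists of the constant-free elements supported on the finitely many variables occurring in $a$, hence $\mathfrak{P}_a\subseteq J$) which strictly contains the minimal prime $\mathfrak{p}_1$; so minimal primes need not be maximal among proper $z^\circ$-ideals in general reduced rings. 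You must therefore either prove or pinpoint-cite that the hypotheses on $M$ force $|Min(R)|<\infty$ (for example by showing that a faithful finitely generated comultiplication module is finitely cogenerated, which for reduced $R$ collapses $R$ to a finite product of fields), or replace prime avoidance by an argument valid for the actual class of rings arising here. A secondary, smaller issue: the facts that $(0:_M\mathfrak{p})\neq 0$ and that $Max^s(M)=\{(0:_M\mathfrak{p}) : \mathfrak{p}\in Min(R)\}$ are plausible in view of Remark \ref{rrr7.1} but are nowhere established in the paper, so they too need a precise reference to \cite{F407} rather than a blanket one. An alternative that stays closer to the paper's visible toolkit would be to take a maximal second submodule $T$ of $M$ containing $S$ (a $dqz^\circ$-submodule by Remark \ref{r1.1}) and argue its minimality via Theorem \ref{t7.9}, but that route runs into the same minimality obstacle, so the finiteness input cannot simply be waved away.
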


\begin{cor}\label{c7.10}
Let $f: N\hookrightarrow M$ be the natural inclusion, where $M$ is  a finitely generated faithful coreduced comultiplication $R$-module and $N$ is a $dqz^\circ$-submodule of $M$. If $K$ is a $dqz^\circ$-submodule of $N$, then $f(N)$ is $dqz^\circ$-submodule of $M$.
\end{cor}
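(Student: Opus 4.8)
The plan is to take the conclusion at face value. Since $f\colon N\hookrightarrow M$ is the natural inclusion, its image $f(N)$ is literally the submodule $N$ sitting inside $M$; the map $f$ alters nothing about the underlying set or the $R$-action. Hence the assertion that $f(N)$ is a $dqz^\circ$-submodule of $M$ is word-for-word the assertion that $N$ is a $dqz^\circ$-submodule of $M$, and the latter is one of the standing hypotheses. So the single step is to record $f(N)=N$ and invoke that hypothesis.

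To make this airtight I would unwind Definition \ref{d1.1} directly for $f(N)$. Because $f$ is an $R$-module embedding with $f(N)=N$, we have $Ann_R(f(N))=Ann_R(N)$; and for each $a\in R$ the submodule $(0:_Ma)$, and therefore the sum of maximal second submodules $\mathfrak{S}_{(0:_Ma)}$, are formed inside $M$ with no reference to how we name the submodule $N$. Thus the defining inclusions ``$f(N)\subseteq \mathfrak{S}_{(0:_Ma)}$ for all $a\in Ann_R(f(N))$'' coincide verbatim with ``$N\subseteq \mathfrak{S}_{(0:_Ma)}$ for all $a\in Ann_R(N)$,'' which hold by hypothesis. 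This verifies the $dqz^\circ$-condition for $f(N)$ with nothing left to check.

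I expect no genuine obstacle: the statement as worded is a formal consequence of the single hypothesis that $N$ is a $dqz^\circ$-submodule of $M$. In particular the hypotheses that $M$ is faithful, finitely generated, coreduced and comultiplication, and that $K$ is a $dqz^\circ$-submodule of $N$, are not needed to derive the conclusion about $f(N)$, so the only care required is to confirm that the identification $f(N)=N$ respects annihilators and the formation of $\mathfrak{S}$, which is immediate for an $R$-linear inclusion. (Were one instead to ask that the image $f(K)$ of the inner $dqz^\circ$-submodule $K$ be a $dqz^\circ$-submodule of $M$, the substantive route would pass through Lemma \ref{t19.3}, showing $Ann_R(K)$ is a $z^\circ$-ideal of $R$ and concluding by that lemma; but that transitivity assertion is not what the present corollary states.)
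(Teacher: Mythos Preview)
Your reading is correct: as literally stated, the conclusion that $f(N)$ is a $dqz^\circ$-submodule of $M$ is immediate from the hypothesis that $N$ is one, since for the inclusion $f$ we have $f(N)=N$. The paper supplies no proof for this corollary, so your one-line identification $f(N)=N$ together with the standing hypothesis is all there is to say.

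You are also right to observe that the hypotheses on $M$ and the presence of $K$ play no role in the conclusion as written, which strongly suggests a typo: the intended conclusion is almost certainly that $f(K)$, not $f(N)$, is a $dqz^\circ$-submodule of $M$. The placement immediately after Theorem~\ref{t7.9}, and the exact parallel with Corollary~\ref{c97.10} following Theorem~\ref{t97.9}, both point to this. For that corrected statement your parenthetical sketch via Lemma~\ref{t19.3} is a reasonable line of attack, though one should be cautious: Lemma~\ref{t19.3} requires the ambient module to be faithful finitely generated comultiplication, and it is not clear that the submodule $N$ inherits these properties, so an argument passing through Theorem~\ref{t7.9} (or a direct computation with annihilators and $\mathfrak{S}$) may be what the authors had in mind.
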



\begin{thebibliography}{9}
\bibitem{MR3073398}
H. Ansari-Toroghy and F. Farshadifar, \emph{On the dual notion of prime submodules},
Algebra Colloq., 19(Special Issue 1):1109-1116, 2012.

\bibitem{MR2821719}
H. Ansari-Toroghy and F. Farshadifar. \textit{The dual notions of some generalizations
of prime submodules}. Comm. Algebra, 39(7):2396-2416, 2011.

\bibitem{MR3934877}
H. Ansari-Toroghy and F. Farshadifar, \emph{Survey on comultiplication modules},
Surv. Math. Appl., 14:61-108, 2019.

\bibitem{HF17}
H. Ansari-Toroghy and F. Farshadifar, \emph{2-absorbing and strongly 2-absorbing secondary submodules of modules}, Le Matematiche, 72 (1) (2017), 123-135.

\bibitem{MR3755273}
H. Ansari-Toroghy, F. Farshadifar, and F. Mahboobi-Abkenar, \emph{On the ideal-based zero-divisor graphs}, Int. Electron. J. Algebra, 23:115-130, 2018.

\bibitem{MR3588217}
H. Ansari-Toroghy, F. Farshadifar, and S. S. Pourmortazavi, \emph{On the P-interiors
of submodules of Artinian modules}, Hacet. J. Math. Stat., 45(3):675-682, 2016.

\bibitem{MR1736781}
F. Azarpanah, O. A. S. Karamzadeh, and A. Rezai Aliabad, \emph{On ideals consisting
entirely of zero divisors}, Comm. Algebra, 28(2):1061-1073, 2000.

\bibitem{Ba81}
 A. Barnard, \emph{Multiplication modules}, J. Algebra, 71(1):174-178, 1981.

\bibitem{MR3085034}
S. \c{C}eken, M. Alkan, and P. F. Smith, \emph{The dual notion of the prime radical of
a module}, J. Algebra, 392:265-275, 2013.

\bibitem{MR3677368}
M. D. Cissé and D. Sow, \emph{On generalizations of essential and small submodules},
Southeast Asian Bull. Math., 41(3):369-383, 2017.

\bibitem{MR498715}
John Dauns, \emph{Prime modules}, J. Reine Angew. Math., 298:156-181, 1978.

\bibitem{F407}
 F. Farshadifar, \emph{Finitely generated coreduced comultiplication modules},  J. Algebra Appl., to appear.

\bibitem{F401}
 F. Farshadifar, \emph{A generalization of $z$-ideals}, submitted.

 \bibitem{F402}
F. Farshadifar, \emph{ $z^\circ$-submodules of a reduced multiplication module}, submitted.

\bibitem{F404}
 F. Farshadifar, \emph{Strong $z^\circ$-submodules of a module}, submitted.

\bibitem{F403}
 F. Farshadifar, \emph{Quasi $z^\circ$-submodules of a reduced multiplication module},  submitted.

\bibitem{MR183747}
 E. H. Feller and E. W. Swokowski, \emph{Prime modules},  Canadian J. Math., 17:1041-1052, 1965.

\bibitem{FHo06}
 L. Fuchs, W. Heinzer, and B. Olberding, \emph{Commutative ideal theory without
finiteness conditions: irreducibility in the quotient field}, 249:121-145, 2006.

\bibitem{MR321915}
G. Mason, \emph{$z$-ideals and prime ideals}, J. Algebra, 26:280-297, 1973.

\bibitem{MR2417474}
 R. L. McCasland and P. F. Smith, \emph{Generalised associated primes and radicals
of submodules}, Int. Electron. J. Algebra, 4:159-176, 2008.

\bibitem{MR2839935}
 K. Samei, \emph{Reduced multiplication modules}, Proc. Indian Acad. Sci. Math. Sci.,
121(2):121-132, 2011.

\bibitem{MR1879449}
S. Yassemi, \emph{The dual notion of prime submodules}, Arch. Math. (Brno),
37(4):273-278, 2001.

\bibitem{Y98}
S.~Yassemi, \emph{The dual notion of the cyclic modules}, Kobe. J. Math.
  \textbf{15} (1998), 41-46.
\end{thebibliography}
\end{document}